\DeclareMathAlphabet\mathbb{U}{msb}{m}{n}
\newcommand\1{{\mathds{1}}}
\renewcommand\ae{{a.\@e.\@}}
\newcommand\B{\mathrm{B}}
\newcommand\BV{\mathrm{BV}}
\newcommand\C{\mathrm{C}}
\renewcommand\c{\mathrm{c}}
\newcommand*{\coleq}{\mathrel{\vcenter{\baselineskip0.5ex\lineskiplimit0pt\hbox{\normalsize.}\hbox{\normalsize.}}}=}
\newcommand*{\colequiv}{\mathrel{\vcenter{\baselineskip0.5ex\lineskiplimit0pt\hbox{\normalsize.}\hbox{\normalsize.}}}\equiv}
\newcommand\D{\mathrm{D}}
\renewcommand\d{\mathrm{d}}
\renewcommand\div{\mathrm{div}}
\newcommand\dx{{\,\d x}}
\newcommand\eps{\varepsilon}
\newcommand\ip{\boldsymbol{\cdot}}
\newcommand\ka{\textup{(}}
\newcommand\kz{\textup{)}}
\renewcommand\L{\mathrm{L}}
\newcommand\llangle{\big\langle\!\!\big\langle}
\newcommand\LN{\mathcal{L}^N}
\newcommand\loc{\mathrm{loc}}
\renewcommand\H{{\mathcal{H}}}
\newcommand\N{{\mathds{N}}}
\renewcommand\P{\mathrm{P}}
\newcommand\p{\varphi}
\newcommand\qq{\qquad}
\newcommand\R{{\mathds{R}}}
\newcommand\rrangle{\big\rangle\!\!\big\rangle}
\newcommand\s{\mathrm{s}}
\DeclareMathOperator\TV{TV}
\newcommand\W{\mathrm{W}}
\DeclareSymbolFont{extraup}{U}{zavm}{m}{n}
\DeclareMathSymbol{\vardiamond}{\mathalpha}{extraup}{87}
\newcommand{\cupdot}{\ensuremath{\,\mathaccent\cdot\cup\,}}
\newcommand{\ol}{\overline}
\newcommand\dt{{\,\d t}}
\renewcommand\div{\mathrm{div}}
\newtheorem{thm}{Theorem}[section]
\newtheorem{cor}[thm]{Corollary}
\newtheorem{lem}[thm]{Lemma}
\newtheorem{prop}[thm]{Proposition}
\theoremstyle{definition}
\newtheorem{assum}[thm]{Assumption}
\newtheorem{defi}[thm]{Definition}
\newtheorem{examp}[thm]{Example}
\newtheorem{rem}[thm]{Remark}
\newcommand\finf{{f^\infty}}
\newcommand\fpers{{\,\overline{\!f}}}
\newcommand\Om{\Omega}
\DeclareMathOperator{\RM}{RM}
\newcommand\MF{\mathcal{F}}
\newcommand\Omd{{\Omega_\lozenge}}
\newcommand\nud{{\nu}_{\Omega_\lozenge}}
\newcommand\wdi{{w_\lozenge}}
\newcommand\udi{{u_\lozenge}}
\newcommand\mud{{\mu_\lozenge}}
\newcommand\Ds{{\D^\s}}
\newcommand\A{{\mathcal{A}}}
\renewcommand\c{{\mathrm{c}}}
\newcommand\pOm{{\Omega^\prime}}
\numberwithin{equation}{section}
\begin{document}

\title{\vspace{-5ex}Existence theory for linear-growth variational integrals\\with signed measure data}
	
\author{
	Eleonora Ficola\footnote{Fachbereich Mathematik, Universit\"at Hamburg, Bundesstr. 55, 20146 Hamburg, Germany.\newline
	Email address: \href{mailto:eleonora.ficola@uni-hamburg.de}{\tt eleonora.ficola@uni-hamburg.de}.
	}
	\qq\qq
	Thomas Schmidt\footnote{Fachbereich Mathematik,
	Universit\"at Hamburg, Bundesstr. 55, 20146 Hamburg, Germany.\newline
	Email address: \href{mailto:thomas.schmidt.math@uni-hamburg.de}{\tt thomas.schmidt.math@uni-hamburg.de}.
	URL: \href{http://www.math.uni-hamburg.de/home/schmidt/}{\tt http:/\!/www.math.uni-hamburg.de/home/schmidt/}.
	}
}

\date{February 5, 2026}

\maketitle

\vspace{-4ex}

\begin{abstract}
  We develop a semicontinuity-based existence theory in $\BV$ for a general class of scalar linear-growth variational integrals with additional signed-measure terms. The results extend and refine previous considerations for anisotropic total variations and area-type cases, and they pave the way for a variational approach to the corresponding Euler-Lagrange equations, which involve the signed measure as right-hand-side datum.
\end{abstract}

\medskip

\noindent\textbf{Mathematics Subject Classification:} 49J45, 35R06, 35J20, 26B30.

\tableofcontents
	
\section{Introduction}

\noindent\textbf{\boldmath Linear-growth functionals with measure.}
In this paper we are concerned with minimization among scalar functions $w\colon\Om\to\R$ of first-order variational integrals of type
\begin{equation}\label{eq:formal_functional}
  \int_\Om f(\,.\,,\nabla w)\dx+\int_\Omega w\,\d\mu\,,
\end{equation}
where the given data are a dimension $N\in\N$, a bounded Lipschitz domain $\Om\subseteq\R^N$, a continuous integrand $f\colon\ol{\Omega}\times\R^N\to[0,\infty)$, and a finite signed Radon measure $\mu$ on $\Omega$. Our motivation partially stems from the corresponding Euler-Lagrange equation, in which $\mu$ takes the role of the right-hand side. This equation --- valid for minimizers $u\colon\Omega\to\R$ at least in suitably differentiable cases --- reads
\begin{equation}\label{eq:EL}
  \div\,\big[\nabla_\xi f(\,.\,,\nabla u)\big]=\mu
  \qq\qq\text{in }\Om\,.
\end{equation}
Specifically, we now focus on the case that $f$ is convex in its second variable $\xi\in\R^N$ and satisfies the linear growth condition
\begin{equation}\label{eq:lin_growth_intro}
  \alpha|\xi|\le f(x,\xi)\le\beta(|\xi|+1)
  \qq\qq\text{for all }(x,\xi)\in\Om\times\R^N
\end{equation}
with constants $0<\alpha\le\beta<\infty$. In this situation, the natural energy space in which general existence results for minimizers of \eqref{eq:formal_functional} can be approached is the space $\BV(\Om)$ of functions of bounded variation on $\Om$. However, some care is needed already in giving a good definition of the integral \eqref{eq:formal_functional} for arbitrary $w\in\BV(\Om)$, since the first term should take into account the derivative measure $\D w\in\RM(\Om,\R^N)$, while the second term requires suitable $\mu$-\ae{} evaluation of $w$.

\medskip

\noindent\textbf{\boldmath First-order convex terms on $\BV$.}
In case of the first term in \eqref{eq:formal_functional}, the common $\BV$ interpretation goes back to \cite{GofSer64,GMS79} and consists in using the convex functional of measures (compare Sections \ref{subsec:integrands} and \ref{subsec:func_meas})
\[
  \int_\Om f(\,.\,,\D w)
  \coleq\int_\Om f(\,.\,,\nabla w)\,\d\LN
  +\int_\Om\finf\left(.\,,\frac{\d\D^\s w}{\d|\D^\s w|}\right)\,\d|\D^\s w|
  \qq\qq\text{for }w\in\BV(\Om)\,.
\]
Here, $\D w=(\nabla w)\LN+\D^\s w$ is the Lebesgue decomposition of $\D w$ with respect to $\LN$, and in our convex case the recession function $\finf\colon\ol\Om\times\R^N\to[0,\infty)$ is given by $\finf(x,\xi)\coleq\lim_{t\to0^+}tf(x,\xi/t)$ and will also be assumed continuous. Moreover, if one adds (as we will always do) a Dirichlet boundary condition, conveniently specified in terms of some $u_0\in\W^{1,1}(\R^N)$, then in the $\BV$ setting this condition needs to be recast in the sense of an additional boundary penalization term, that is, the first term in \eqref{eq:formal_functional} is replaced with
\[
  \int_\Om f(\,.\,,\D w)
  +\int_{\partial\Om}\finf(\,.\,,(w{-}u_0)\nu_\Om)\,\d\H^{N-1}
  \qq\qq\text{for arbitrary }w\in\BV(\Om)\,,
\]
where $\nu_\Om$ denotes the inward normal at $\partial\Om$ (see again Sections \ref{subsec:integrands} and \ref{subsec:func_meas} for more notation and details).

\medskip

\noindent\textbf{\boldmath Zero-order measure terms on $\BV$.}
For what concerns the second term in \eqref{eq:formal_functional}, a first plausible approach is to understand it as
\begin{equation}\label{eq:mu_w*}
  \int_\Om w^\ast\,\d\mu
  \qq\qq\text{for }w\in\BV(\Om)\,,  
\end{equation}
where the $\H^{N-1}$-\ae{} defined precise representative $w^\ast$ takes the Lebesgue value in the Lebesgue points of $w$ and the average of the two jump values in its jump points. The well-posedness of the term for arbitrary $w\in\BV(\Om)$ is then equivalent with the vanishing of $|\mu|$ on all $\H^{N-1}$-negligible sets plus a mild boundedness requirement on $|\mu|$ (compare with Definition \ref{defi:mu}), and we here call such measures $\mu$ the admissible ones. In fact, the admissibility conditions are very natural and have already been around in the literature in order to characterize either the continuity of the embedding $\BV(\Om)\hookrightarrow\L^1(\Om\,;\mu)$, or the inclusion $\mu\in\BV(\Om)^\ast$, or the divergence structure $\mu=\div\,\sigma$ with some $\sigma\in\L^\infty(\Om,\R^N)$, the latter also an obvious necessary condition for solving \eqref{eq:EL}; compare \cite{MeyZie77,PhuTor08,PhuTor17} for full-space results and \cite{Schmidt25,ComLeo25,FS} for results on domains $\Om$. Recently, however, it has been observed that, for general admissible $\mu$, one cannot expect lower semicontinuity and existence results when using \eqref{eq:mu_w*}, but rather the results of \cite{Schmidt25,LeoCom24_v5,FS} underpin that the proper understanding of the measure term for matters of variational existence theory is
\begin{equation}\label{eq:mu_wmp}
  \llangle\mu_\pm\,;w^\mp\rrangle
  \coleq\int_\Om w^-\,\d\mu_+-\int_\Om w^+\,\d\mu_-
  \qq\qq\text{for }w\in\BV(\Om)\,.
\end{equation}
Here, $\mu=\mu_+{-}\mu_-$ (with $\mu_\pm\ge0$ singular to each other) is the Jordan decomposition of $\mu$, and $w^+$ takes the larger, $w^-$ the smaller of the two jump values in jump points of $w$, while both $w^+$ and $w^-$ still take the Lebesgue value in Lebesgue points. We remark that, specifically for $w\in\W^{1,1}(\Om)$, the choice \eqref{eq:mu_wmp} still reduces to simply $\llangle\mu_\pm\,;w^\mp\rrangle=\int_\Om w^\ast\,\d\mu$ and that we have $\llangle H_\pm\LN\,;w^\mp\rrangle=\int_\Om wH\,\d\LN$ for $w\in\BV(\Om)$ and a measure $H\LN$ with $H\in\L^1(\Om)$. Thus, for the precise convention in \eqref{eq:mu_wmp} to matter, $w\in\BV(\Om)\setminus\W^{1,1}(\Om)$ needs to meet a measure $\mu$ with non-vanishing singular part\footnote{More precisely, the truly relevant cases are only those with $|\mu|(\mathrm{J}_w)>0$ for the approximate jump set $\mathrm{J}_w$ of $w\in\BV(\Om)$.}.

\medskip

\noindent\textbf{Literature context, isoperimetric conditions, and main results.}
All in all, the preceding considerations indicate that, for admissible $\mu$ and arbitrary $w\in\BV(\Om)$, the reasonable extension of the functional in \eqref{eq:formal_functional} is given by
\begin{equation}\label{eq:MF_intro}
  \MF_{u_0}^\mu[w]
  \coleq\int_\Om f(\,.\,,\D w)
  +\int_{\partial\Om}\finf(\,.\,,(w{-}u_0)\nu_\Om)\,\d\H^{N-1}
  +\llangle\mu_\pm\,;w^\mp\rrangle\,.
\end{equation}
This functional $\MF_{u_0}^\mu$ is the concern of our main results, and indeed these results essentially parallel and generalize the ones obtained by Leonardi--Comi \cite{LeoCom24_v5} and the first author \cite{Schmidt25} for the case of prescribed-mean-curvature measures (i.\@e.\@ for $f(x,\xi)=\sqrt{1+|\xi|^2}$ and a parametric counterpart of the corresponding functional) as well as the ones of our predecessor paper \cite{FS} for the case of a possibly anisotropic total variation with measure (i.\@e.\@ for $f(x,\xi)$ additionally homogeneous of degree $1$ in $\xi$, in other words for $\finf=f$); compare also \cite{Ziemer95,MerSegTro08,SchSch16,DaiTruWan12,DaiWanZho15} for previous related solution theory of the equation \eqref{eq:EL} with right-hand-side measure in these specific cases. For general functionals with measure of type \eqref{eq:formal_functional}, to the state of our knowledge the sole results available are those of Carriero--Leaci--Pascali \cite{CarLeaPas85,CarLeaPas86,CarLeaPas87} and Pallara \cite{Pallara91}, who in certain $p$-coercive cases with $p\ge1$ establish lower semicontinuity restricted to $\W^{1,p}(\Om)$ (or subspaces thereof). Hence, in these works the convention \eqref{eq:mu_w*} suffices and the finer choice in \eqref{eq:mu_wmp} is irrelevant. However, in our case with $p=1$, these results come with the decisive drawback that they remain restricted to $\W^{1,1}(\Om)$, which is not an adequate space for deducing any existence results.

A crucial ingredient in our theory are (linear) isoperimetric conditions (ICs),
which are more quantitative versions of the admissibility conditions for the
measure $\mu$. These conditions depend on the integrand $f$ through $f^\infty$
only and essentially take the form
\begin{equation}\label{eq:ICs_intro}
  {-}C\int_{\partial A}\finf(\,.\,,\nu_A)\,\d\H^{N-1}
  \le\mu(A)
  \le C\int_{\partial A}\finf(\,.\,,{-}\nu_A)\,\d\H^{N-1}
  \qq\qq\text{for all smooth }A\Subset\Om
\end{equation}
with inward normal $\nu_A$ and with a constant $C\in[0,\infty)$. Conditions of
similar nature previously occurred in the calculus of variations in
\cite{BomGiu73,Miranda74,Giaquinta74a,Giaquinta74b,Gerhardt74,Steffen76a,Steffen76b,Giusti78a,Duzaar93,Ziemer95,DuzSte96,DaiTruWan12,DaiWanZho15,Schmidt25,LeoCom24_v5,FS},
for instance, and despite an inherent technical flavor are nowadays
well-understood. The prototype examples captured are (sums of)
$(N{-}\gamma)$-dimensional measures $\theta\H^{N-\gamma}\resmes S$ with
$\gamma\in[0,1]$, a Borel set $S$, and a density
$\theta\in\L^1(S\,;\H^{N-\gamma})$. Given such a measure $\mu$ the ICs can often
be grasped and verified by geometrically interpreting the integral terms in
\eqref{eq:ICs_intro} as anisotropic perimeters of $A$ (which in case
$f^\infty(x,\xi)=|\xi|$ reduce to the standard isotropic perimeter). Alternatively,
one can sometimes establish \eqref{eq:ICs_intro} in an elegant manner by finding
a calibration $\sigma\in\L^\infty(\Om,\R^N)$ with
$\|(\finf)^\circ(\,.\,,\sigma)\|_{\L^\infty(\Om)}\le C$ (in the standard
case reduced to $\|\sigma\|_{\L^\infty(\Om,\R^N)}\le C$) such that $\div(\sigma)=\mu$
holds in the sense of distributions; this calibration criterion is, in fact,
necessary and sufficient and has been established in \cite[Theorems 4.2,
4.6]{FS} in full generality. Furthermore, a simple class of concrete examples in
the most relevant borderline case $\gamma=1$ is provided by
\cite[Proposition 8.1]{Schmidt25} where it is recorded that the ICs
\eqref{eq:ICs_intro} with $f^\infty(x,\xi)=|\xi|$ are valid for
$C\H^{N-1}\resmes\partial^{(\ast)}\!K$ whenever $K$ is a bounded
(pseudo)convex set in $\R^N$; compare the discussion following
\cite[Definition 3.1]{FS} for the extension to general anisotropies. For a more
thorough treatment of ICs in similar framing and the more extensive discussion
of examples, we generally refer to \cite[Sections 7, 8]{Schmidt25} and
\cite[Section 4]{ComLeo25} in case $f^\infty(x,\xi)=|\xi|$ and to \cite[Sections
3, 4, 5]{FS} in general. Here, we limit ourselves to additionally recording, as
a central indication of significance, the necessity of \eqref{eq:ICs_intro} for
solving the Euler-Lagrange equation \eqref{eq:EL}. Indeed, whenever $u$ is a
smooth solution of \eqref{eq:EL} such that $\nabla_\xi f(\,.\,,\nabla
u)\in\C^0(\Omega)$, the divergence theorem and the convexity-based inequality
${\pm}\nabla_\xi f(x,\xi)\ip\nu\le\finf(x,{\pm}\nu)$ for $x\in\Om$ and
$\xi,\nu\in\R^N$ imply
\[
  {\mp}\mu(A)
  ={\pm}\int_{\partial A}\nabla_\xi f(\,.\,,\nabla u)\ip\nu_A\,\d\H^{N-1}
  \le\int_{\partial A}\finf(\,.\,,{\pm}\nu_A )\,\d\H^{N-1}
  \qq\qq\text{for all smooth }A\Subset\Om
\]
and thus confirm the validity of \eqref{eq:ICs_intro} with the specific constant
$C=1$.

The first and tentatively most decisive of our general results (Theorem \ref{thm:LSC}) asserts that reversely the ICs \eqref{eq:ICs_intro} with $C=1$ imply the lower semicontinuity of $\MF_{u_0}^\mu$ on $\BV(\Om)$ with respect to $\L^1(\Om)$-convergence. This is interesting, since the measure term $\llangle\mu_\pm\,;w^\mp\rrangle$ alone is \emph{not} lower semicontinuous with respect to this convergence, and thus the result involves IC-governed compensation effects between the different terms of $\MF_{u_0}^\mu$. In fact, the result even applies for measures $\mu_\pm$ which do not arise by Jordan decomposition of a signed measure $\mu$, but we defer the treatment of this more incidental generalization to the later sections. Moreover, if the ICs \eqref{eq:ICs_intro} hold even with $C<1$, then it is comparably straightforward to check also coercivity of $\MF_{u_0}^\mu$ and deduce our second main result (Theorem \ref{thm:exist_inhom}) on the existence of at least one minimizer of $\MF_{u_0}^\mu$. Correspondingly, we demonstrate with the later Example \ref{ex:anis_area} that coercivity and existence indeed do not extend to the borderline case $C=1$. Finally, we complement the semicontinuity result with a construction of recovery sequences (Theorem \ref{thm:rec_seq}), which does \emph{not} require a quantitative IC assumption like \eqref{eq:ICs_intro}. Anyhow, \emph{if} \eqref{eq:ICs_intro} with $C=1$ is at hand, then the combination of our results also identifies $\MF_{u_0}^\mu$ as the natural extension by semicontinuity from $\W^{1,1}_{u_0}(\Om)\coleq u_0+\W^{1,1}_0(\Om)$ to $\BV(\Om)$ (Corollary \ref{cor:rel_func}). We believe that the results just described can be naturally complemented with duality considerations, which are partially similar to those in \cite{Anz86,Bildhauer03,BecSch15,SchSch18,LeoCom24_v5} and which in particular yield a weak form of the Euler-Lagrange equation \eqref{eq:EL} for the $\BV$ minimizers of our theory. However, we leave details of such considerations for further work.

\medskip

\noindent\textbf{Assumptions on the integrand and exemplary cases.}
For the integrand $f$, in addition to the standard hypotheses already mentioned (that is, the growth condition \eqref{eq:lin_growth_intro} plus convexity in $\xi$ and continuity in $(x,\xi)$ of both $f(x,\xi)$ and $\finf(x,\xi)$; cf.\@ Assumption \ref{assum:f}) our approach requires imposing the mild extra requirement
\begin{equation}\label{assum:H4_intro}
  f(x,\xi)\ge\finf(x,\xi)-M
  \qq\qq\text{for all }(x,\xi)\in\Om\times\R^N
\end{equation}
with a constant $M\in\R$. In fact, we tend to believe that \eqref{assum:H4_intro} is \emph{not} truly necessary for any of our results, but also that with our taken approach its elimination would require quite some extra effort (see Remarks \ref{rem:H4_for_LSC} and \ref{rem:H4_for_coercivity} for finer discussion). Additionally, being \eqref{assum:H4_intro} satisfied in most relevant model cases and examples, as discussed next, we conclude that this assumption seems reasonable at least, and we keep it here.

In particular, all our assumptions are satisfied for the standard total variation (i.\@e.\@ $f(x,\xi)=|\xi|$) and more generally for the anisotropic total variation cases already covered in \cite{FS}. In addition, we now include the prominent model case of the area integrand
\[
  f(x,\xi)=\sqrt{1+|\xi|^2}
\]
and in this case refine some of the results in \cite{LeoCom24_v5} by treating measures $\mu$ of general form and extending semicontinuity to the borderline case $C=1$ of the ICs. Beyond that, further natural model cases covered here are Finslerian area integrands and specifically Riemannian area integrands
\[
  f(x,\xi)=\sqrt{\nu_0^2+\p(x,\xi)^2\,}
  \qq\qq\text{and}\qq\qq
  f(x,\xi)=\sqrt{\nu_0^2+g_x(\xi,\xi)}
\]
with $\nu_0\in(0,\infty)$, a $\C^0$ Finsler metric $\p\in\C^0(\ol\Om\times\R^N)$, and a $\C^0$ Riemannian metric $g$ on $\ol\Omega$. Some further $x$-independent examples included in our treatment are 
\[
  f(x,\xi)=(1+|\xi|^p)^{1/p}\,,\qq\qq
  f(x,\xi)=\begin{cases}\frac1p|\xi|^p&\text{if }|\xi|\le1\\|\xi|-\frac{p{-}1}p&\text{if }|\xi|\ge1\end{cases}\,,\qq\qq
  f(x,\xi)=|\xi|\arctan|\xi|
\]
with parameter $p\in(1,\infty)$ (where the latter two integrands need $M\ge\frac{p-1}p$ and $M\ge1$ in \eqref{assum:H4_intro}, respectively). Clearly, one may think of similar integrands with additional $x$-dependent coefficients as well. However, there exist also negative examples which fulfill all other assumptions relevant here, but are ruled out by failure of \eqref{assum:H4_intro} only. Two among such cases closely related to each other are
\[
  f(x,\xi)=1+|\xi|-{\big(1+|\xi|\big)}^\theta
  \qq\qq\text{and}\qq\qq
  f(x,\xi)=\big(|\xi|-|\xi|^\theta\big)_++1
\]
with parameter $\theta\in(0,1)$.

\medskip

\noindent\textbf{On the main semicontinuity proof.}
At this stage let us briefly comment on the underlying idea of the main semicontinuity proof in this paper. Indeed, we will rewrite the general functional $\MF_{u_0}^\mu$ --- up to harmless remainder terms --- as an anisotropic total variation ($\TV$) functional with measure and thus will essentially reduce to a case covered by the framework of \cite{FS}. The approach, by which we achieve this reduction, is passing from competitors $w\in\BV(\Om)$ to new competitors $\wdi\in\BV(\Omd)$ on $\Omd\coleq(0,1)\times\Om\subseteq\R^{N+1}$, defined by
\begin{equation}\label{eq:wdi_intro}
  \wdi(x_0,x)\coleq x_0+w(x)
  \qq\qq\text{for }(x_0,x)\in\Omd
\end{equation}
with an extra variable $x_0\in(0,1)$. With these choices, for arbitrary $w\in\BV(\Om)$, we will additively split
\[
  \MF_{u_0}^\mu[w]=\widehat\Phi[\wdi]+\mathcal{R}[w]\,,
\]
where the remainder $\mathcal{R}$ is a harmless zero-order functional and is even continuous with respect to $\L^1(\Om)$-convergence. The decisive terms instead go into $\widehat\Phi$, an anisotropic $\TV$ functional with measure, defined on $W\in\BV(\Omd)$ by
\[
  \widehat\Phi[W]=\int_\Omd\p\bigg(.\,,\frac{\d\D W}{\d|\D W|}\bigg)\,\d|\D W|+\int_{\partial\Omd}\p(\,.\,,(W{-}{u_0}_\lozenge)\nu_\Omd)\,\d\H^N+\llangle\mathcal{L}^1\otimes\mu_\pm\,;W^\mp\rrangle\,,
\]
where the new integrand $\p\colon\ol{\Omd}\times\R^{N+1}\to[0,\infty)$ is a sort of 1-homogeneous extension of $f$ (closely related to what is occasionally called perspective function). Specifically, the extension of $f(x,\xi)=\sqrt{1+|\xi|^2}$ in the previous sense is $\p((x_0,x),\Xi)=|\Xi|$, that is, the area case (in $N$ variables) is reduced by our strategy to the standard total variation case (in $N{+}1$ variables). In any case, with the rewriting at hand and after some further considerations (in particular on carrying over the ICs to the product measure $\mathcal{L}^1\otimes\mu$), in the end we will deduce lower semicontinuity of $\MF_{u_0}^\mu$ from lower semicontinuity of $\widehat\Phi$ guaranteed by our previous result in \cite[Theorem 3.5]{FS}.

Finally, we find it worth pointing out that $\wdi$ from \eqref{eq:wdi_intro} has the derivative $\D\wdi=\mathcal{L}^1\otimes(\LN,\D w)\in\RM(\Omd,\R^{N+1})$, where after \cite{GMS79} the usage of the second factor $(\LN,\D w)\in\RM(\Om,\R^{N+1})$ has become quite standard in the analysis of variational problems on $\BV(\Om)$. Still, we believe that our additional small trick of recasting the functional via the extra variable $x_0$ and the formula \eqref{eq:wdi_intro} on the level of the functions themselves has not yet been around in the literature and might eventually find further applications.

\medskip

\noindent\textbf{Remarks on the vector-valued case.} Before closing this
introduction we add some comments on reasons for restricting the considerations
of this paper to functionals which depend on \emph{scalar} functions. Indeed,
this limitation is inherent to our semicontinuity proof inasmuch as we build on
the predecessor results of \cite{FS} and these results in turn depend on the
Fleming-Rishel coarea formula for \emph{scalar} $\BV$ functions. Still, at first
glance it seems plausible that a modified approach may yield a semicontinuity
result analogous to our central Theorem \ref{thm:LSC} also for \emph{vectorial}
functionals
\begin{equation}\label{eq:vectorial}
  \MF_{u_0}^\mu[w]
  \coleq\int_\Om f(\,.\,,\D w)
  +\int_{\partial\Om}\finf(\,.\,,(w{-}u_0){\otimes}\nu_\Om)\,\d\H^{N-1}
  +\int_\Om w^\mu\ip\d\mu
  \qq\text{for }w\in\BV(\Om,\R^d)
\end{equation}
with arbitrary $d\ge2$, with the first term understood as before, with a finite
$\R^d$-valued Radon measure $\mu$ on $\Om$, with a suitable $\mu$-dependent
representative $w^\mu$ of $w$ which is (at least) $|\mu|$-\ae{} defined, and
with $\otimes$ and $\ip$ denoting the dyadic product and the inner product,
respectively. However, we will now show that lower semicontinuity of
\eqref{eq:vectorial} with respect to $\L^1(\Om,\R^2)$-convergence fails
already in the basic model case where the first-order term is the parametric or
non-parametric length of functions $w\in\BV({({-}1,1)},\R^2)$ with boundary
datum $u_0({\pm}1)=\big(\begin{smallmatrix}{\pm}1\\0\end{smallmatrix}\big)$
and the measure is the $\R^2$-valued measure
$\mu\coleq\big(\begin{smallmatrix}0\\{-}2\theta\end{smallmatrix}\big)\delta_0$
on $({-}1,1)$ with the Dirac measure $\delta_0$ at $0$ and arbitrary
$\theta\in(0,1)$. In other words, for either $f(\xi)\coleq|\xi|$ or
$f(\xi)\coleq\sqrt{1+|\xi|^2}$, we consider the functional
\[
  \MF_{u_0}^\mu[w]
  =\int_{-1}^1f(\D w)
  +\big|w(1){-}\big(\begin{smallmatrix}1\\0\end{smallmatrix}\big)\big|
  +\big|w({-}1){-}\big(\begin{smallmatrix}{-}1\\0\end{smallmatrix}\big)\big|
  -2\theta w^\mu_2(0)
  \qq\text{for }w\in\BV({({-}1,1)},\R^2)\,.
\]
Now, for fixed $\eps>0$, the sequence $(u_k)_k$ in $\BV({({-}1,1)},\R^2)$
given by $u_k(x)\coleq\big(\begin{smallmatrix}\mathrm{sgn}(x)\\0\end{smallmatrix}\big)$
for $|x|>\frac1k$ and
$u_k(x)\coleq\big(\begin{smallmatrix}0\\\eps\end{smallmatrix}\big)$ for
$|x|<\frac1k$ converges in $\L^1(\Om,\R^2)$ to $u\in\BV({({-}1,1)},\R^2)$ with
$u(x)\coleq\big(\begin{smallmatrix}\mathrm{sgn}(x)\\0\end{smallmatrix}\big)$. Moreover, whenever the choice of $w^\mu$ at least preserves
constancy intervals of the components,
i.e. if, for $i\in\{1,2\}$, constancy of $w_i$ near $x_0$ guarantees $w^\mu_i(x_0)=w_i(x_0)$, then lower semicontinuity of
$\MF_{u_0}^\mu$ fails along this specific sequence as soon as one takes
$\eps<\frac{2\theta}{1-\theta^2}$. Indeed, in case $f(\xi)=|\xi|$ an explicit
computation gives
\[
  \MF_{u_0}^\mu[u_k]=2\big(\sqrt{1{+}\eps^2}-\theta\eps\big)<2=\MF_{u_0}^\mu[u]
  \qq\text{for all }k\in\N\,,
\]
and in case $f(\xi)=\sqrt{1+|\xi|^2}$ one gets the same with merely a constant
$2$ added, that is,
\[
  \MF_{u_0}^\mu[u_k]=2\big(1+\sqrt{1{+}\eps^2}-\theta\eps\big)<4=\MF_{u_0}^\mu[u]
  \qq\text{for all }k\in\N\,.
\]
Since $\theta\in(0,1)$ is arbitrary, this (counter)example includes measures
$\mu=\big(\begin{smallmatrix}0\\{-}2\theta\end{smallmatrix}\big)\delta_0$
whose components satisfy ICs of type \eqref{eq:ICs_intro} with arbitrarily small
constants $C$. In conclusion, lower semicontinuity in the vectorial case can only
hold in severely restricted situations. For instance, it trivially holds in case
that the integrand splits in the sense of
$f(x,\xi)=\sum_{i=1}^df_i(x,\xi_i)$ (where $\xi_i\in\R^N$
denotes the $i$-th row of $\xi\in\R^{d\times N}$) and consequently also the
functional
$\MF_{u_0}^\mu[w]=\sum_{i=1}^d(\MF_i)_{(u_0)_i}^{\mu_i}[w_i]$
splits additively into functionals of the single components. Moreover, as our
(counter)example pertains to the borderline case of $(N{-}1)$-dimensional
$\R^d$-valued measures $\mu$ on $\R^N$, it leaves open the question if lower
semicontinuity is or is not preserved for a variant of \eqref{eq:vectorial} in
case of general integrands $f$ and suitable $(N{-}\gamma)$-dimensional
$\R^d$-valued measures $\mu$ on $\R^N$ with $\gamma\in(0,1)$. Anyway, we do not
further pursue this last-mentioned issue in the present paper.

Finally, in contrast with the semicontinuity situation we tend to believe that
our result on recovery sequences (Theorem \ref{thm:rec_seq}) carries
over to the vectorial case of \eqref{eq:vectorial} in a reasonably general
framework. Indeed, as soon as one can lift a precise approximation result
analogous to \cite[Proposition 4.4]{FS} or the more general
\cite[Theorem 3.2]{ComLeo25} to the vectorial case with certain choices of
representative $w^\mu$, it seems likely that also our construction of recovery
sequences will carry over with a couple of adaptations. Furthermore, if one is
willing to put aside the interesting borderline case of $(N{-}1)$-dimensional
measures and rather focuses on measures $\mu$ with bounded
$(N{-}\gamma)$-dimensional density ratio
$\sup_{x\in\R^N,r>0}\big(r^{\gamma-N}\mu(\B_r(x)\cap\Om)\big)<\infty$ for some
$\gamma\in{[0,1)}$, even a more direct route is possible: In that case one
falls back to the term $\int_\Om w^\ast\!\ip\d\mu$ defined with the standard
$|\mu|$-\ae{} representative $w^\ast$ and can deduce existence of recovery sequences for
$\MF_{u_0}^\mu$ --- both in the scalar and the $\R^d$-valued case --- by the
well-known strict approximation result \cite[Lemma B.2]{Bildhauer03}
in combination with the refined strict continuity of such measure terms from
\cite[Theorem 6.10]{GmeRaiVSc21}.

\medskip

\noindent\textbf{Plan of this paper.} In the subsequent Section \ref{sec:prelim} we collect various preliminaries, while the full statements of our main results are given and are contextualized in Section \ref{sec:results}. In Section \ref{sec:LSC_exist} we carry out the semicontinuity proof (by the strategy described) and eventually deduce coercivity and existence. The counterexample to existence in the borderline case $C=1$ is addressed in Section \ref{sec:counterexample}, while the short Section \ref{sec:rec_seq} implements a construction of recovery sequences in close analogy with \cite{FS}. Finally, Appendix \ref{asec:sections_ICs} collects add-on observations on codimension-one slicing of $\BV$ functions, relevant only for very general cases of our theory.

\section{Preliminaries} \label{sec:prelim}

\subsection[General notation, measures, \texorpdfstring{$\BV$}{BV} functions, and sets of finite perimeter]{\boldmath General notation, measures, $\BV$ functions, and sets of finite perimeter}

We work in Euclidean space $\R^N$ with $N \in \N$, where $|x|$ and $x\!\ip\!y$ stand for Euclidean norm and inner product of vectors $x,y\in\R^N$, respectively. For measurable sets $A\subseteq\R^N$, we denote by $|A| \coleq \mathcal{L}^N(A)$ the $N$-dimensional Lebesgue measure of $A$. We write $\B_r(x)$ for the open ball in $\R^N$ centered in $x \in \R^N$ and with radius $r \in (0,\infty)$, and we abbreviate $\B_r$ in case $x=0$. The $N$-dimensional volume $|\B_1|$ of the unit ball $\B_1\subseteq\R^N$ is abbreviated as $\omega_N$. Moreover, we write $\mathcal{H}^M$ for the $M$-dimensional Hausdorff measure on $\R^N$, relevant mostly in the codimension-one case. The usual notations $\ol{A}$ and $\partial A$ are employed for the closure and boundary of $A$, respectively, and $\1_A$ is the indicator function of $A$. 

\smallskip

Given $m \in \N$ and an open set $U \subseteq \R^N$, we denote by $\RM_{(\loc)}(U,\R^m)$ the space of all (locally) finite $\R^m$-valued Radon measures on $U$. Two non-negative measures $\mu$, $\widehat{\mu}$ on $U$ are said to be \emph{mutually singular}, written $\mu \perp \widehat{\mu}$, if there exist disjoint $E, F \subseteq U$ such that $\mu(F)=\widehat{\mu}(E)=0$. 
Any signed Radon measure $\mu$ on $U$ allows for Jordan decomposition $\mu = \mu_+ - \mu_-$ with $\mu_\pm$ non-negative, mutually singular Radon measures on $U$. We call $\mu_+$ and $\mu_-$ the \emph{positive} and \emph{negative} part of $\mu$, respectively, and the non-negative Radon measure $|\mu|\coleq \mu_+ + \mu_-$ the \emph{variation measure} of $\mu$. However, on occasion (compare Theorems \ref{thm:LSC} and \ref{thm:exist_inhom} in particular) we eventually denote by $\mu_\pm$ some given non-negative measures which need not necessarily arise by Jordan decomposition and need not be mutually singular. For a non-negative Borel measure $\mu$ on $U$ and $\nu\in\RM_{\loc}(U,\R^m)$, we say that $\nu$ is \emph{absolutely continuous} with respect to $\mu$ and write $\nu \ll \mu$ if for every Borel set $B$ such that $\mu(B)=0$ we have $|\nu|(B)=0$ as well. Supposed $\mu$ is additionally $\sigma$-finite, we recall that the Radon-Nikod\'ym theorem (compare with \cite[Theorem 1.28]{AFP00}) yields the existence of a unique (up to $\mu$-negligible sets) $\R^m$-valued \emph{density} $g \in \L^1_\loc(U,\R^m;\mu)$ and of an $\R^m$-valued measure $\nu^\s$ singular to $\mu$ such that it holds $\nu=g\mu + \nu^\s$ as measures on $U$. In this situation, we also write $\frac{\d\nu}{\d \mu}$ for the density $g$.

\smallskip

For open $U \subseteq \R^N$, we introduce the space of \emph{functions of \ka locally\kz \ bounded variation} in $U$, written $\BV_{(\loc)}(U)$, as the space of all $u \in \L^1_{(\loc)}(U)$ such that the distributional gradient $\D u$ of $u$ exists as $\R^N$-valued Radon measure on $U$. We then define the \emph{total variation} of $u$ in a Borel set $B \subseteq U$ Borel as $\TV(u,B)\coleq |\D u|(B)$. The space $\BV(U)$ is a Banach space when endowed with the norm $\|u\|_{\BV(U)} \coleq \|u\|_{\L^1(U)} + |\D u|(U)$. However, norm convergence in $\BV$ is often too restrictive, and we will rather work with \emph{strict convergence} of a sequence $(u_k)_k$ in $\BV(U)$ to a limit $u$ in $\BV(U)$, which by definition means nothing but $u_k\to u$ in $\L^1(U)$ together with $|\D u_k|(U)\to|\D u|(U)$. Even more useful will be a variant, the \emph{area-strict convergence} of $(u_k)_k$ to $u$ in $\BV(U)$, which is reasonable in case $|U|<\infty$ and then requires $u_k\to u$ in $\L^1(U)$ together with $\int_U\sqrt{1+|\D u_k|^2}\to\int_U\sqrt{1+|\D u|^2}$ (where $\int_U\sqrt{1+|\D u|^2}$ may be equivalently defined either as a the total variation $|(\LN,\D u)|(U)$ of $(\LN,\D u)\in\RM(U,\R^{N+1})$ or as a functional of measures in the sense of Section \ref{subsec:func_meas}).
Moreover, if $E \subseteq \R^N$ is such that $\1_E \in \BV_{\loc}(U)$ holds for some open $U \subseteq \R^N$, we call $E$ a \emph{set of locally finite perimeter} in $U$, and its \emph{perimeter} in a Borel set $B \subseteq U$ is given by $\P(E,B) \coleq |\D \1_E|(B)\in[0,\infty)$. In this situation, the Radon-Nikod\'ym density $\nu_E\coleq\frac{\d\D\1_E}{\d|\D\1_E|}$ is defined, and is a unit vector field $|\D\1_E|$-\ae{} on $U$. In fact, De Giorgi's structure theorem gives $\D\1_E=\nu_E\H^{N-1}\resmes\partial^\ast\!E$ as measures in $U$ with the reduced boundary $\partial^\ast\!E$ of $E$ (compare \cite[Definition 3.54, Theorem 3.59]{AFP00}), and this allows understanding $\nu_E$ as the generalized inward unit normal of $E$ at $U\cap\partial^\ast\!E$.
Finally, a set of locally finite perimeter such that $\P(E,U)<\infty$ is called a \emph{set of finite perimeter} in $U$, and in case $U=\R^N$ we abbreviate $\P(E)\coleq\P(E,\R^N)$.

\smallskip

Given a measurable set $A \subseteq \mathbb{R}^N$ and $\theta \in {[0,1]}$, we write $A^\theta$ for the set of density-$\theta$ points for $A$, that is, 
\[
A^{\theta} \coleq \left\{ x \in \mathbb{R}^N\,:\, \lim_{r \to 0}  \frac{| \B_r(x) \cap A|}{|\B_r|} = \theta \right\}\,.
\]
Particularly relevant are the \emph{measure-theoretic interior} $A^1$ and the \emph{measure-theoretic closure} $A^+ \coleq (A^0)^\c$ of $A$. Moreover, for $ x \in U \subseteq \R^N$ and measurable $u \colon U \to \R$, we introduce the \emph{approximate upper limit} and the \emph{approximate lower limit} of $u$ at $x$ as
\[
u^+(x) \coleq \sup \left\{ t \in \R \colon \ x \in \left\{ u > t \right\}^+ \right\}\,, 
\qquad 
u^-(x) \coleq \sup \left\{ t \in \R \colon \ x \in \left\{ u > t \right\}^1 \right\}\,,
\]
and we define the \emph{precise representative} $u^\ast$ of $u$ by setting $u^\ast(x) \coleq (u^+(x)+u^-(x))/2$. 
If specifically we have $u\in\L^1_{\loc}(U)$, all the above-mentioned representatives coincide at Lebesgue points of $u$, and $u^\pm$ represent the jump values of $u$ at its jump points. For $u \in \BV_\loc(U)$, the Federer-Vol'pert theorem (see for example \cite[Theorem 3.78]{AFP00}) implies that $\H^{N-1}$-a.\@e.\@ point outside the jump set $\mathrm{J}_u$ of $u$ is a Lebesgue point, and thus in this case $u^+=u^-=u^\ast$ holds $\H^{N-1}$-\ae{} in $U\setminus\mathrm{J}_u$. Clearly, in case of $u \in \W^{1,1}_\loc(U)$, the coincidence holds even $\H^{N-1}$-\ae{} in $U$.

\subsection{Polars of positively 1-homogeneous integrands}
  \label{subsec:polar_aniso}

We say that $\p\colon\R^N\to\R$ is \emph{positively 1-homogeneous} if it satisfies $\p(t\xi)=t\p(\xi)$ for all $\xi\in\R^N$ and $t\in[0,\infty)$ (which in particular includes the requirement $\p(0)=0$). For every differentiability point $\xi\in\R^N$ of such $\p$, differentiation with respect to $t$ at $t=1$ gives Euler's relation $\xi\ip\nabla\p(\xi)=\p(\xi)$. Furthermore, a sort of dual of a positively 1-homogeneous function is given by the following well-known construction.

\begin{defi}[polar]
\label{defi:polar}
  Consider a positively 1-homogeneous function $\p \colon \R^N \to [0,\infty)$ such that\/ $\p(\xi)>0$ holds for all $\xi\in\R^N\setminus\{0\}$. Then, the \emph{polar function} $\p^\circ \colon \R^N \to [0,\infty)$ of $\p$ is defined by 
  \[
    \p^\circ(\xi^\ast)
    \coleq \sup_{\xi \in \R^N\setminus\{0\}} \frac{ \xi^\ast \ip \xi }{\p(\xi)}
    \qq\text{for }\xi^\ast\in\R^N\,.
  \]
\end{defi}

The polar $\p^\circ$ is always positively 1-homogeneous and convex in $\R^N$ with $\p^\circ(\xi^\ast)>0$ for all $\xi^\ast\in\R^N\setminus\{0\}$, thus it is also continuous and \ae{} differentiable with non-zero gradient in $\R^N$. Moreover, the definition of the polar implies the anisotropic Cauchy-Schwarz inequality
\begin{equation}\label{eq:aniso_CS}
  \xi^\ast \ip \xi \leq \p^\circ(\xi^\ast)\,\p(\xi)
  \qq\text{for all }\xi,\xi^\ast\in\R^N\,,
\end{equation}
where, for each $\xi^\ast\in\R^N$, there exists at least one $\xi\in\R^N\setminus\{0\}$ such that \eqref{eq:aniso_CS} becomes an equality. Beyond that we record:

\begin{lem}[equality cases in the anisotropic Cauchy-Schwarz inequality] \label{lem:rel_p_gradient}
  Consider a positively $1$-homoge\-neous function $\p\colon \R^N \to [0,\infty)$ such that $\p(\xi)>0$ holds for all $\xi\in\R^N\setminus\{0\}$. Then, for every differentiability point $\xi^\ast\in\R^N\setminus\{0\}$ of\/ $\p^\circ$, the corresponding $\xi\in\R^N\setminus\{0\}$ which achieve equality in \eqref{eq:aniso_CS} are fully characterized by the condition $\frac\xi{\p(\xi)} = \nabla \p^\circ(\xi^\ast)$. In particular,
  \[
    \p(\nabla\p^\circ(\xi^\ast))=1
  \]
  holds for every differentiability point\/ $\xi^\ast\in\R^N\setminus\{0\}$ of\/ $\p^\circ$ and thus for \ae{} $\xi^\ast\in\R^N$.
\end{lem}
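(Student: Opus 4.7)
The plan is to exploit the variational characterization of the polar together with Euler's relation for positively $1$-homogeneous functions. Throughout, fix a differentiability point $\xi^\ast\in\R^N\setminus\{0\}$ of $\p^\circ$.

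First I would tackle the forward implication: assume $\xi\in\R^N\setminus\{0\}$ realizes equality in \eqref{eq:aniso_CS}. Rescaling by positive $1$-homogeneity, set $\eta\coleq\xi/\p(\xi)$, so that $\p(\eta)=1$ and $\xi^\ast\ip\eta=\p^\circ(\xi^\ast)$. Consider the auxiliary function $g(y^\ast)\coleq\p^\circ(y^\ast)-y^\ast\ip\eta$ on $\R^N$. The anisotropic Cauchy-Schwarz inequality \eqref{eq:aniso_CS} combined with $\p(\eta)=1$ gives $y^\ast\ip\eta\le\p^\circ(y^\ast)$ for all $y^\ast\in\R^N$, so $g\ge 0$, while the equality hypothesis reads $g(\xi^\ast)=0$. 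Thus $\xi^\ast$ is a global minimum of $g$, and since $\p^\circ$ is differentiable at $\xi^\ast$ so is $g$; Fermat yields $0=\nabla g(\xi^\ast)=\nabla\p^\circ(\xi^\ast)-\eta$, i.e.\ $\xi/\p(\xi)=\nabla\p^\circ(\xi^\ast)$.

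For the converse direction I would use Euler's relation at the differentiability point $\xi^\ast$, namely $\xi^\ast\ip\nabla\p^\circ(\xi^\ast)=\p^\circ(\xi^\ast)$, which was recorded at the start of Section~\ref{subsec:polar_aniso}. If $\xi\in\R^N\setminus\{0\}$ satisfies $\xi/\p(\xi)=\nabla\p^\circ(\xi^\ast)$, then multiplying Euler's relation by $\p(\xi)$ gives exactly $\xi^\ast\ip\xi=\p^\circ(\xi^\ast)\p(\xi)$, so equality in \eqref{eq:aniso_CS} holds.

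Finally, the identity $\p(\nabla\p^\circ(\xi^\ast))=1$ drops out for free: the paragraph preceding the lemma guarantees that at least one equality case $\xi\in\R^N\setminus\{0\}$ exists for this $\xi^\ast$, and the forward step just shown then yields $\nabla\p^\circ(\xi^\ast)=\xi/\p(\xi)$, whose $\p$-value is $1$ by positive $1$-homogeneity. The a.e.\ statement follows from Rademacher's theorem applied to the (convex, hence locally Lipschitz) function $\p^\circ$. I do not foresee a serious obstacle here; the only point worth a second look is the minor subtlety that existence of an equality case for every $\xi^\ast$ is needed to invoke the first step and then conclude the normalization identity.
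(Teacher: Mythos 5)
Your proof is correct and takes essentially the same route as the paper: the forward implication is Fermat's first-order criterion applied to the optimization problem hidden in the definition of $\p^\circ$ (you minimize $g(y^\ast)=\p^\circ(y^\ast)-y^\ast\ip\eta$, the paper maximizes $\tau^\ast\mapsto\tau^\ast\ip\xi-\p^\circ(\tau^\ast)\p(\xi)$, which after rescaling is the same statement), the converse is Euler's relation for $\p^\circ$ multiplied by $\p(\xi)$, and the normalization identity follows from the existence of an equality case recorded after \eqref{eq:aniso_CS}. Your remark about Rademacher is a fine way to justify the a.e.\ statement, consistent with the paper's observation that $\p^\circ$ is convex and hence a.e.\ differentiable.
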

 
\begin{proof}
  We fix a differentiability point $\xi^\ast\in\R^N\setminus\{0\}$ of $\p^\circ$. One one hand, if $\xi\in\R^N\setminus\{0\}$ achieves equality in \eqref{eq:aniso_CS}, then $\xi^\ast$  maximizes $\tau^\ast\mapsto\tau^\ast\!\ip\xi-\p^\circ(\tau^\ast)\,\p(\xi)$ on $\R^N$, and by the first-order criterion this implies
  $\xi=\p(\xi)\,\nabla\p^\circ(\xi^\ast)$, in other words $\xi/\p(\xi)=\nabla\p^\circ(\xi^\ast)$. On the other hand, if $\xi/\p(\xi)=\nabla\p^\circ(\xi^\ast)$ holds, we bring in Euler's relation for the homogeneous function $\p^\circ$ in order to get $\xi^\ast\!\ip\xi=\p(\xi)\,\xi^\ast\!\ip\nabla\p^\circ(\xi^\ast)=\p(\xi)\,\p^\circ(\xi^\ast)$ and thus achieve equality in \eqref{eq:aniso_CS}. Finally, since some $\xi\in\R^N\setminus\{0\}$ with the previous properties always exists (as recorded directly after \eqref{eq:aniso_CS}), by homogeneity of $\p$ we obtain
  $\p(\nabla\p^\circ(\xi^\ast))=\p(\xi/\p(\xi))=1$.
\end{proof}

\subsection{Anisotropic total variations and anisotropic perimeters}

Given a set $U\subseteq\R^N$ and arbitrary function $\p\colon U\times\R^N\to[0,\infty)$, we denote by $\widetilde\p\colon U\times\R^N\to[0,\infty)$ the \emph{mirrored integrand} which is defined by
\[
  \widetilde\p(x,\xi)\coleq\p(x,{-}\xi)
  \qq\text{for }(x,\xi)\in U\times\R^N\,.
\]
We will often consider integrands $\p$ which are positively 1-homogeneous and/or convex in $\xi$ (i.\@e.\@ $\xi\mapsto\p(x,\xi)$ has the respective property for every $x\in U$), have linear growth in $\xi$ (i.\@e.\@ $\alpha|\xi|\le\p(x,\xi)\le\beta|\xi|$ for all $(x,\xi)\in U\times\R^N$ with some $0<\alpha\le\beta<\infty$), or are continuous in $(x,\xi)$. For future usage we briefly record that all these properties trivially carry on from $\p$ to $\widetilde\p$.

\medskip
    
Now we recall the definition of the anisotropic total variations and anisotropic perimeters. 
	
\begin{defi}[anisotropic total variation and anisotropic perimeter] \label{defi:anis_TV}
  We consider an open set $U\subseteq\R^N$ and a Borel function $\p\colon U\times\R^N\to{[0,\infty)}$ which is positively 1-homogeneous in $\xi$. For $w\in\BV_\loc(U)$, the $\p$-anisotropic total variation of\/ $w$ on a Borel set $B\subseteq U$ is 
  \[
    \TV_\p(w,B)
    \coleq|\D w|_\p(B)\coleq\int_B\p(\,.\,,\nu_w)\,\d|\D w|\,,
  \]
  where $\nu_w\coleq\frac{\d\D w}{\d|\D w|}$ denotes the Radon-Nikod\'ym derivative. In particular, for a set $E\subseteq\R^N$ of locally finite perimeter in $U$, the $\p$-anisotropic perimeter of\/ $E$ in a Borel set $B\subseteq U$ is obtained as 
  \[
    \P_\p(E,B)
    \coleq|\D\1_E|_\p(B)=\int_B\p(\,.\,,\nu_E)\,\d|\D\1_E|
  \]
  with the generalized inward normal $\nu_E=\frac{\d\D\1_E}{\d|\D\1_E|}$. We abbreviate $\P_\p(E,\R^N)$ as $\P_\p(E)$. Moreover, by convention we understand $\TV_\p(w,B)=\infty$ if $w\notin\BV_\loc(U')$ for every open $U'$ such that $B \subseteq U' \subseteq \R^N$ and $\P_\p(E,B)=\infty$ if a $E$ does not have locally finite perimeter in any open $U'$ such that $B \subseteq U' \subseteq \R^N$. Finally, with slight abuse of notation, we identify a positively 1-homogeneous $\p\colon\R^N\to[0,\infty)$ with its extension $\p\colon\R^N\times\R^N\to[0,\infty)$ such that $\p(x,\xi)=\p(\xi)$ and tacitly adopt the previous notions to this understanding.
\end{defi}

In the situation of the definition, we have the easy relations $\TV_{\widetilde\p}(w,B)=\TV_{\p}({-}w,B)$ and $\P_{\widetilde\p}(E,B)=\P_\p(E^\c,B)$. Moreover, we stress that the usage of the inward normal (rather than the outward normal) in our definition of $\P_\p$ is not fully standard and causes an opposite sign or the occurrence of $\widetilde\p$ in some of our statements. Still, we prefer this convention which ensures $\TV_\p(\1_E,B)=\P_\p(E,B)$ without an additional minus sign. 
Clearly, for even $\p$, we have $\widetilde\p=\p$, then these details do not matter anyway.

\smallskip

The next statements are originally due to Reshetnyak \cite{Reshetnyak68} and may be read off from \cite[Theorems 2.38, 2.39]{AFP00}, essentially by specializing the assertions to the case of derivative measures $\D w\in\RM(U,\R^N)$. The first one extends the central semicontinuity property of the total variation to the anisotropic case.

\begin{thm}[Reshetnyak semicontinuity; homogeneous version]
\label{thm:Resh_LSC_hom_BV}
  For an open set\/ $U\subseteq\R^N$, assume that\/ $\p\colon U \times \R^N \to {[0,\infty)}$ is positively 1-homogeneous in $\xi$, convex in $\xi$, lower semicontinuous in $(x,\xi)$, and satisfying $\p(x,\xi)\ge\alpha|\xi|$ for all $(x,\xi)\in U\times\R^N$ and some $\alpha>0$. Then, whenever a sequence $(u_k)_k$ in $\BV(U)$ converges in $\L^1(U)$ to $u\in\BV(U)$, there holds
  \[
    \liminf_{k \to \infty}|\D u_k|_{\p}(U)
    \geq|\D u|_{\p}(U)\,.
  \]
\end{thm}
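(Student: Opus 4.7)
The plan is to reduce Theorem \ref{thm:Resh_LSC_hom_BV} to the classical Reshetnyak lower semicontinuity theorem for weakly-$\ast$ convergent Radon measures, applied to the gradient measures $\D u_k,\D u\in\RM(U,\R^N)$. Thus the task is essentially to translate $\L^1$-convergence of the functions into weak-$\ast$ convergence of their derivative measures (with the coercivity assumption $\p(x,\xi)\ge\alpha|\xi|$ providing the required boundedness).

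First, I may assume $L\coleq\liminf_k|\D u_k|_{\p}(U)<\infty$, since otherwise the claim is trivial, and by extracting a (non-relabeled) subsequence I may even arrange $L=\lim_k|\D u_k|_{\p}(U)$. The coercivity gives $|\D u_k|(U)\le\alpha^{-1}|\D u_k|_{\p}(U)$, so $(u_k)_k$ is bounded in $\BV(U)$. Combining the $\L^1(U)$-convergence $u_k\to u$ with this uniform $\BV$-bound, for any test field $\psi\in\C^\infty_\c(U,\R^N)$ the integration-by-parts identity
\[
  \int_U\psi\ip\d\D u_k=-\int_U u_k\,\div\psi\dx
\]
yields convergence of the right-hand sides, hence $\int_U\psi\ip\d\D u_k\to\int_U\psi\ip\d\D u$; by the uniform bound and density of $\C^\infty_\c(U,\R^N)$ in $\C_0(U,\R^N)$, this upgrades to weak-$\ast$ convergence $\D u_k\overset{\ast}{\rightharpoonup}\D u$ in $\RM(U,\R^N)$.

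At this stage I apply the classical Reshetnyak lower semicontinuity theorem as in \cite[Theorem 2.38]{AFP00}: under positive 1-homogeneity, convexity, and lower semicontinuity in $(x,\xi)$ of $\p$ (with non-negativity), weak-$\ast$ convergence $\D u_k\overset{\ast}{\rightharpoonup}\D u$ implies
\[
  \int_U\p\bigg(.\,,\frac{\d\D u}{\d|\D u|}\bigg)\,\d|\D u|\le\liminf_{k\to\infty}\int_U\p\bigg(.\,,\frac{\d\D u_k}{\d|\D u_k|}\bigg)\,\d|\D u_k|,
\]
which is precisely the assertion $|\D u|_{\p}(U)\le\liminf_k|\D u_k|_{\p}(U)$.

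The only genuinely non-trivial step is the translation from $\L^1$-convergence to weak-$\ast$ convergence of gradient measures, and this is handled entirely by the coercivity assumption; all other ingredients are pure bookkeeping reductions to the cited classical result. In particular, the bulk of the analytical content is off-loaded to \cite[Theorem 2.38]{AFP00}, so there is no substantial obstacle beyond ensuring that the hypothesis formats of our statement and of the source theorem are aligned, which they are.
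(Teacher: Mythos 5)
Your proof is correct and takes essentially the same route as the paper, which simply cites \cite[Theorem 2.38]{AFP00} specialized to derivative measures $\D w\in\RM(U,\R^N)$. The only detail you supply beyond the paper's reference is the (standard) passage from $\L^1$-convergence plus the coercivity-induced $\BV$-bound to weak-$\ast$ convergence of the gradient measures, and that step is carried out correctly.
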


The subsequent companion result concerns the more restricted class of sequences which converge \emph{strictly} in $\BV(U)$. Along such sequences it grants continuity without any convexity assumption on the integrand.

\begin{thm}[Reshetnyak continuity; homogeneous version] 
    \label{thm:Resh_cont_hom_BV}
  For an open set\/ $U\subseteq\R^N$, assume that\/ $\p\colon U \times \R^N \to {[0,\infty)}$ is positively 1-homogeneous in $\xi$ and continuous in $(x,\xi)$ and satisfies $\p(x,\xi)\leq\beta|\xi|$ for all $(x,\xi)\in U\times\R^N$ and some $\beta\in[0,\infty)$. Then, whenever a sequence $(u_k)_k$ in $\BV(U)$ converges strictly in $\BV(U)$ to $u\in\BV(U)$, there holds
  \[
    \lim_{k \to \infty}|\D u_k|_{\p}(U)
    = |\D u|_{\p}(U)\,.
  \]
\end{thm}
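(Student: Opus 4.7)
The plan is to reduce the claim to a weak-$\ast$ convergence of auxiliary measures on the compact \emph{sphere bundle} $\ol U\times S^{N-1}$. First I would unpack strict convergence: $u_k\to u$ in $\L^1(U)$ forces $\D u_k\rightharpoonup\D u$ weakly-$\ast$ in $\RM(U,\R^N)$ by distributional identification of any weak-$\ast$ cluster point, while the convergence $|\D u_k|(U)\to|\D u|(U)$ of total masses (combined with lower semicontinuity of the variation under weak-$\ast$ convergence) upgrades this to the weak-$\ast$ convergence $|\D u_k|\rightharpoonup|\D u|$ of the positive variation measures. Setting $\mu_k\coleq\D u_k$, $\mu\coleq\D u$, the target becomes
\[
  \int_U\p(\,.\,,\nu_{\mu_k})\,\d|\mu_k|
  \longrightarrow\int_U\p(\,.\,,\nu_{\mu})\,\d|\mu|\,,
\]
where $\nu_{\mu_k},\nu_{\mu}$ are the respective Radon-Nikod\'ym polars, and where by 1-homogeneity $\p$ is determined by its restriction to $\ol U\times S^{N-1}$.

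Next I would introduce lifted measures $\pi_k,\pi\in\RM(\ol U\times S^{N-1})$ as the push-forwards of $|\mu_k|,|\mu|$ under the maps $x\mapsto(x,\nu_{\mu_k}(x))$ and $x\mapsto(x,\nu_{\mu}(x))$, respectively. Since $\pi_k$ has total mass $|\mu_k|(U)$ and is supported on the compact set $\ol U\times S^{N-1}$, the sequence is tight and, up to extraction, converges weakly-$\ast$ to some $\pi_\infty$. The key step is to identify $\pi_\infty=\pi$. Testing against $\psi(x,\nu)=f(x)$ with $f\in\C(\ol U)$ yields $\int f\,\d\pi_\infty=\int f\,\d|\mu|$, so that $\pi_\infty$ has first marginal $|\mu|$. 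Testing against the linear-in-$\nu$ functions $\psi(x,\nu)=f(x)\,\nu_i$ yields $\int f(x)\,\nu_i\,\d\pi_k=\int f\,\d\mu_k^{(i)}\to\int f\,\d\mu^{(i)}=\int f(x)\,\nu_i\,\d\pi$ for each component $i$. Disintegrating $\pi_\infty=\int\eta_x\,\d|\mu|(x)$ with probability measures $\eta_x$ on $S^{N-1}$, these barycenter identities read $\int_{S^{N-1}}\nu\,\d\eta_x(\nu)=\nu_{\mu}(x)$ for $|\mu|$-\ae{} $x$. Because $\nu_\mu(x)\in S^{N-1}$ is a unit vector and $\eta_x$ is a probability measure on $S^{N-1}$, strict convexity of the Euclidean ball forces $\eta_x=\delta_{\nu_\mu(x)}$, hence $\pi_\infty=\pi$. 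Uniqueness of the weak-$\ast$ limit then promotes the extraction to full-sequence convergence $\pi_k\rightharpoonup\pi$.

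Finally, the hypotheses on $\p$ give that $\p\restr_{\ol U\times S^{N-1}}\in\C(\ol U\times S^{N-1})$ with $\p\le\beta$ on this compact set. Testing the weak-$\ast$ convergence $\pi_k\rightharpoonup\pi$ against this continuous bounded function and using 1-homogeneity to re-express the integrals as $|\D u_k|_\p(U)$ and $|\D u|_\p(U)$, respectively, concludes the proof.

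The main obstacle is the identification $\pi_\infty=\pi$. Weak-$\ast$ convergence is trivial for test functions that are linear in $\nu$, but this alone controls only the vector-valued measure $\mu_k$, not the scalar $|\mu_k|$ weighted by a general continuous function of the polar. The convergence of total masses $|\mu_k|(U)\to|\mu|(U)$ is exactly what compensates, by first producing the correct marginal $|\mu|$ of $\pi_\infty$, and then the geometric rigidity of the sphere (a probability measure on $S^{N-1}$ with unit-norm barycenter is necessarily a Dirac mass) does the rest. Without the assumption $|\mu_k|(U)\to|\mu|(U)$ the whole identification fails, which is why the statement specifically requires strict, and not merely $\L^1$, convergence.
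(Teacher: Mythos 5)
The paper does not actually prove Theorem~\ref{thm:Resh_cont_hom_BV}: it simply notes that both Reshetnyak theorems ``may be read off from'' \cite[Theorems 2.38, 2.39]{AFP00} after specializing to derivative measures. What you have written is, in substance, the canonical Reshetnyak argument that those references contain --- lift to measures on $U\times S^{N-1}$, identify the limit via marginal and barycenter information, exploit rigidity of the sphere, test against the bounded continuous $1$-homogeneous integrand --- so this is not a different route from the paper's intended one, but rather the proof the paper is outsourcing.

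Within your write-up there are two imprecisions worth flagging, because as stated they are false under the theorem's hypotheses, even though the strategy survives once they are repaired. First, $U$ is an arbitrary open subset of $\R^N$, so $\ol{U}\times S^{N-1}$ need not be compact, and the one-line claim that the $\pi_k$ are ``supported on the compact set $\ol{U}\times S^{N-1}$, [hence] the sequence is tight'' does not hold. Tightness nevertheless follows, but from the fact that the first marginals $|\D u_k|$ converge weak-$\ast$ in $\RM(U)$ with total masses converging to $|\D u|(U)<\infty$; one checks, by testing against a cut-off $\chi\in\C_\c(U)$ with $\chi\equiv1$ on a large compact and using lower semicontinuity plus mass convergence, that $\limsup_k|\D u_k|(U\setminus K)$ can be made arbitrarily small. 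Second, the hypotheses give continuity of $\p$ only on $U\times\R^N$, so the assertion ``$\p\restr_{\ol{U}\times S^{N-1}}\in\C(\ol{U}\times S^{N-1})$'' is not justified; there is no continuous extension to the closure. What one actually has is $\p\restr_{U\times S^{N-1}}\in\C_b(U\times S^{N-1})$ (bounded by $\beta$ via the growth condition), and the final step should invoke the fact that weak-$\ast$ convergence $\pi_k\rightharpoonup\pi$ together with $\pi_k(U\times S^{N-1})\to\pi(U\times S^{N-1})$ on a locally compact $\sigma$-compact metric space upgrades to narrow convergence, i.\@e.\@ convergence of integrals against all bounded continuous test functions. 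With those two corrections the argument is complete and matches the cited proof.
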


\smallskip

Finally, we put on record the anisotropic isoperimetric inequality, originally established in \cite[Section 1]{Busemann49} and converted into our framework in \cite[Proposition 2.3]{AFTL97}.

\begin{thm}[$\p$-anisotropic isoperimetric inequality]  \label{thm:anis_isop_ineq}
  Fix a positively 1-homogeneous and convex function $\p \colon \R^N \to [0,\infty)$ such that\/ $\p(\xi)>0$ holds for all\/ $\xi \in \R^N\setminus\{0\}$. Then, for measurable $A \subseteq \R^N$ and $r\in[0,\infty)$ such that\/ $|A|=|\{\p^\circ<r\}|<\infty$, one has
  \[
    \P_{\widetilde\p}(A) \geq \P_{\widetilde\p}(\{\p^\circ<r\})\,.
  \]
\end{thm}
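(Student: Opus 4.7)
The plan is to derive the inequality from the classical Brunn--Minkowski inequality by identifying $\P_{\widetilde\p}$ with a Minkowski-type content relative to the Wulff body $K\coleq\{\p^\circ\le1\}$. Since $\p^\circ$ is positively 1-homogeneous, convex, and strictly positive off the origin, $K$ is a bounded convex body with $0$ in its interior. By the very definition of the polar one has $\eta\ip y\le\p(\eta)\,\p^\circ(y)\le\p(\eta)$ for $y\in K$, and Lemma \ref{lem:rel_p_gradient} supplies an $\eta\mapsto y\in K$ with equality; consequently the support function of $K$ satisfies $h_K(\eta)\coleq\sup_{y\in K}\eta\ip y=\p(\eta)$ for every $\eta\in\R^N$.

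Next I would establish the Minkowski content representation
\[
  \P_{\widetilde\p}(A)=\lim_{t\to0^+}\frac{|A+tK|-|A|}{t}.
\]
For $A$ with smooth, bounded boundary this follows from the tubular-neighborhood formula: the layer $(A+tK)\setminus A$ has $\H^{N-1}$-surface density $h_K(-\nu_A)=\p(-\nu_A)=\widetilde\p(\nu_A)$ at $x\in\partial A$ (with $\nu_A$ the inward normal), so that the limit equals $\int_{\partial A}\widetilde\p(\nu_A)\,\d\H^{N-1}=\P_{\widetilde\p}(A)$. The general case of a set of finite perimeter is reached by approximating $\1_A$ in area-strict sense by smooth $\1_{A_k}$ and invoking the Reshetnyak continuity of Theorem \ref{thm:Resh_cont_hom_BV} applied to the integrand $\widetilde\p$. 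Combining this representation with the classical Brunn--Minkowski inequality $|A+tK|^{1/N}\ge|A|^{1/N}+t|K|^{1/N}$ and letting $t\to0^+$ yields the anisotropic isoperimetric estimate
\[
  \P_{\widetilde\p}(A)\ge N|K|^{1/N}|A|^{(N-1)/N}.
\]

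Finally, for the Wulff ball $A_0\coleq\{\p^\circ<r\}$, which up to a null set coincides with $rK$, the scaling $rK+tK=(r+t)K$ gives $|rK+tK|=(r+t)^N|K|$, and the content formula at once produces $\P_{\widetilde\p}(A_0)=Nr^{N-1}|K|=N|K|^{1/N}|A_0|^{(N-1)/N}$; thus the previous inequality is sharp at $A_0$. Whenever $|A|=|A_0|<\infty$, the two displays combine to give $\P_{\widetilde\p}(A)\ge\P_{\widetilde\p}(A_0)$, as claimed. The main obstacle in the above program is the rigorous derivation of the Minkowski content formula beyond smooth boundaries; once that step is in place, both the application of Brunn--Minkowski and the explicit computation on the Wulff ball are routine.
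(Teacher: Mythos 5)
The paper does not prove this theorem; it cites \cite[Section 1]{Busemann49} and \cite[Proposition 2.3]{AFTL97}. So the comparison is with the classical literature rather than with an in-paper argument. Your proposed route (support function of the Wulff body, Minkowski content, Brunn--Minkowski) is one of the standard classical proofs and is a legitimate way to establish the result.

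That said, the gap you flag at the end is sharper than you suggest, and your proposed fix would not close it. The Minkowski content identity
\[
\P_{\widetilde\p}(A)=\lim_{t\to0^+}\frac{|A+tK|-|A|}{t}
\]
does \emph{not} extend from smooth sets to general sets of finite perimeter by area-strict approximation and Reshetnyak continuity: Reshetnyak controls the left-hand side, $\P_{\widetilde\p}(A_k)\to\P_{\widetilde\p}(A)$, but gives no control on the double limit $k\to\infty$, $t\to0^+$ on the right. In fact the identity is simply false for general finite-perimeter sets (it depends on the chosen representative, and one can make the Minkowski content strictly larger than the perimeter, even infinite, by a suitable measure-zero modification of $A$), and even for a ``good'' representative it requires additional structure such as rectifiability hypotheses or convexity. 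The cure, using only the ingredients you already list, is to reorganize: prove the scaling-invariant inequality $\P_{\widetilde\p}(A)\ge N|K|^{1/N}|A|^{(N-1)/N}$ first only for smooth bounded $A$ (where the tube formula and Brunn--Minkowski combine exactly as you describe), then pass to a general measurable $A$ of finite $\widetilde\p$-perimeter by taking smooth $A_k$ with $\1_{A_k}\to\1_A$ strictly in $\BV(\R^N)$ and invoking Theorem \ref{thm:Resh_cont_hom_BV} on the \emph{inequality} (whose left side converges by Reshetnyak, whose right side converges since $|A_k|\to|A|$), never on the Minkowski content formula. The equality case for the Wulff ball $\{\p^\circ<r\}=rK$ is then computed directly as you do. A smaller point worth making explicit: the identification $h_K(\eta)=\p(\eta)$ uses not just Lemma \ref{lem:rel_p_gradient} but also bipolarity $(\p^\circ)^\circ=\p$, since the lemma as stated produces equality pairs for a given $\xi^\ast$, whereas you need, for each $\eta$, an achieving $y\in K$; applying the lemma to $\p^\circ$ in place of $\p$ together with bipolarity yields $y=\nabla\p(\eta)\in\partial K$ with $\eta\ip y=\p(\eta)$ at differentiability points of $\p$, and then one closes the argument by density and continuity.
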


\subsection{Linear-growth integrands, recession and perspective function} \label{subsec:integrands}

For this section, we fix an open set $U\subseteq\R^N$.

\begin{defi}\label{defi:f-finf-perspective}
  A function $f \colon U \times \R^N \to \R$ has \textit{linear growth} \ka to be understood in the second variable\kz \ if 
  \begin{equation*} 
    \alpha |\xi| \leq f(x,\xi) \leq \beta(|\xi|+1)
    \qq\text{for all } (x,\xi) \in U \times \R^N
  \end{equation*}
  holds with constants $\alpha, \beta \in [0,\infty)$. If $f$ is moreover convex in its second variable $\xi\in\R^N$, we introduce the \textit{recession function} $\finf \colon U \times \R^N \to [0,\infty)$ of $f$ by setting 
  \[
    \finf(x,\xi)\coleq\lim_{t \to 0^+} t f\left(x,\frac{\xi}{t} \right)
    =\lim_{s \to \infty} \frac{f(x,s\xi)}{s}
    \qq\text{for all } x \in U\,, \ \xi \in \R^N\,.
  \]
  The \textit{perspective} \textup{(}or \textit{homogenized}\textup{)} \textit{function} of\/ $f$ is $\fpers \colon  U \times [0,\infty) \times \R^N   \to [0,\infty)$ given by
  \[ 
    \fpers(x,t,\xi)\coleq
    \begin{cases}
      t f\left(x,\frac{\xi}{t}\right) &\text{for }t > 0\\
      \finf(x,\xi) &\text{for }t = 0
    \end{cases}\,.
  \]
  By definition, $t \mapsto \fpers(x,t,\xi)$ is continuous at\/ $t=0$, and there hold\/ $ \fpers(x,1,\xi)= f(x,\xi)$ and\/ $\fpers(x,0,\xi)=\finf(x,\xi)$ for all $(x,\xi) \in U \times \R^N$. 
\end{defi}

We now state some relevant properties of the recession and perspective function.

\begin{lem}\label{lem:properties_rec_persp}
  If $f\colon U \times \R^N \to \R$ has linear growth and is convex in $\xi \in \R^N$, then we have\textup{:}
  \begin{enumerate}[{\rm(i)}]
  \item\label{item:properties_rec_persp_i}The recession function $\finf$ is well-defined, convex in $\xi$, positively 1-homogeneous in $\xi$, and it satisfies
  \begin{equation}\label{eq:lin_growth_finf}
    \alpha |\xi| \leq \finf(x,\xi) \leq \beta |\xi|
    \qq\text{for all\/ }(x,\xi) \in U \times \R^N\,.	
  \end{equation}
  Similarly, the perspective function $\fpers$ is convex in $(t,\xi)$, positively 1-homogeneous in $(t,\xi)$, and it satisfies
  \[
    \alpha |\xi| \leq \fpers(x,t,\xi) \leq \beta (t+|\xi|)
    \qq\text{for all\/ } (x,t,\xi) \in U \times [0,\infty) \times \R^N\,.
  \]
  \item\label{item:properties_rec_persp_ii}If $f$ itself is positively 1-homogeneous in $\xi$, then it holds 
  \[
    \fpers(x,t,\xi)=\finf(x,\xi)=f(x,\xi)
    \qq\text{for all\/ } (x,t, \xi) \in U \times [0,\infty) \times \R^N\,.
  \]
  \item\label{item:properties_rec_persp_iii}If $f$ is lower semicontinuous, then $\finf$ and $\fpers$ are lower semicontinuous as well. 
  \item\label{item:properties_rec_persp_iv}If both $f$ and\/ $\finf$ are continuous, then $\fpers$ is continuous. 
  \end{enumerate}
\end{lem}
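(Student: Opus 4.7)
The plan is to attack the four items in order.

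For (i), well-definedness of $\finf$ rests on observing that for each $(x,\xi)$ the map $s\mapsto h(s)\coleq f(x,s\xi)-f(x,0)$ is convex on $[0,\infty)$ with $h(0)=0$, so that the standard convexity argument gives the monotonicity $h(s_1)/s_1\le h(s_2)/s_2$ for $0<s_1\le s_2$. Combined with the linear growth of $f$ and with $f(x,0)\in[0,\beta]$, this yields existence of $\finf(x,\xi)=\lim_{s\to\infty}f(x,s\xi)/s$ together with the growth bounds $\alpha|\xi|\le\finf(x,\xi)\le\beta|\xi|$. Convexity and positive 1-homogeneity of $\finf$ in $\xi$ follow by passing to the limit in the corresponding properties of $s\mapsto f(x,s\xi)/s$. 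For $\fpers$, 1-homogeneity in $(t,\xi)$ is a direct substitution, convexity in $(t,\xi)$ on $U\times(0,\infty)\times\R^N$ is the classical fact that the perspective of a convex function is convex, and the extension to $t=0$ by $\finf$ preserves convexity since it arises as a pointwise limit; the growth bounds for $\fpers$ come immediately from those for $f$ and $\finf$. Claim (ii) is then a direct computation: 1-homogeneity of $f$ yields $tf(x,\xi/t)=f(x,\xi)$ for $t>0$, and letting $t\to 0^+$ gives $\finf=f$.

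For (iii), the crux is proving lsc of $\finf$. From the monotonicity above, $a(s)\coleq(f(x,s\xi)-f(x,0))/s$ is non-decreasing in $s>0$ with $a(s)\to\finf(x,\xi)$, hence $\sup_{s>0}a(s)=\finf(x,\xi)$. Setting $b(s)\coleq(f(x,s\xi)-\beta)/s$, one has $a(s)-b(s)=(\beta-f(x,0))/s\ge 0$ and still $\lim_{s\to\infty}b(s)=\finf(x,\xi)$, which gives the cleaner supremum representation
\[
  \finf(x,\xi)=\sup_{s>0}\frac{f(x,s\xi)-\beta}{s}\,.
\]
If $f$ is lsc on $U\times\R^N$, each $(x,\xi)\mapsto(f(x,s\xi)-\beta)/s$ is lsc, and the supremum of lsc functions is lsc; hence so is $\finf$. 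Lower semicontinuity of $\fpers$ on the open set $\{t>0\}$ is immediate from lsc of $f$ and continuity of $(t,\xi)\mapsto\xi/t$ there. At a boundary point $(x_0,0,\xi_0)$, I split an approximating sequence into the subsequence with $t_k=0$, where the just-proved lsc of $\finf$ applies, and the subsequence with $t_k>0$; for the latter, fixing $s>0$ and using the convexity estimate
\[
  t_kf(x_k,\xi_k/t_k)\ge\frac{f(x_k,s\xi_k)-(1-st_k)f(x_k,0)}{s}
  \qq\text{(valid for }st_k\le 1\text{)}
\]
together with $f(x_k,0)\le\beta$ yields $\liminf_k t_kf(x_k,\xi_k/t_k)\ge(f(x_0,s\xi_0)-\beta)/s$, and supremizing over $s$ recovers $\finf(x_0,\xi_0)$.

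For (iv), continuity of $\fpers$ on $\{t>0\}$ is immediate; at a boundary point $(x_0,0,\xi_0)$, the lsc from (iii) is already in place and only a matching upper semicontinuity is needed. I will first upgrade $f(x,s\xi)/s\to\finf(x,\xi)$ to uniform convergence on compact subsets of $U\times\R^N$ as $s\to\infty$: the functions $(f(x,s\xi)-f(x,0))/s$ are monotone non-decreasing in $s$, continuous in $(x,\xi)$, and under the present hypothesis their limit $\finf$ is continuous, so Dini's theorem applies and gives locally uniform convergence, while the correction $f(x,0)/s$ vanishes uniformly on compacts because $f(\cdot,0)$ is continuous and thus locally bounded. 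Then for $(x_k,t_k,\xi_k)\to(x_0,0,\xi_0)$ with $t_k>0$, the midpoint convexity estimate
\[
  t_kf(x_k,\xi_k/t_k)\le\tfrac{t_k}{2}f(x_k,2\xi_0/t_k)+\tfrac{t_k}{2}f(x_k,2(\xi_k{-}\xi_0)/t_k)
\]
rewrites the first summand as $f(x_k,s_k\xi_0)/s_k$ with $s_k\coleq 2/t_k\to\infty$, which converges to $\finf(x_0,\xi_0)$ by the uniform convergence and continuity of $\finf$, while the second summand is bounded by $\beta|\xi_k{-}\xi_0|+\beta t_k/2\to 0$; the case $t_k=0$ is handled by continuity of $\finf$. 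The delicate points are the supremum representation that makes lsc of $\finf$ transparent (bypassing the non-monotonicity in $t$ of $tf(x,\xi/t)$) and the Dini step used in (iv).
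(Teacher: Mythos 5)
Your proposal is correct, and parts (i) and (ii) proceed along the same lines as the paper. However, in parts (iii) and (iv) you take a genuinely different route. For (iii), the paper establishes lower semicontinuity of $\fpers$ at $t=0$ by combining the lsc already known at $t>0$ with the observation that $t\mapsto\fpers(x,t,\xi)-\fpers(x,t,0)$ is non-increasing, approximating $\finf(x,\xi)$ from below by $\fpers(x,1/s,\xi)-\beta/s$ and then sliding $1/s$ down to $t_k$; lsc of $\finf$ drops out as $\finf=\fpers(\cdot,0,\cdot)$. You instead isolate $\finf$ first via the monotone supremum representation $\finf(x,\xi)=\sup_{s>0}(f(x,s\xi)-\beta)/s$ — a neat observation that makes the lsc of $\finf$ immediate — and then obtain lsc of $\fpers$ at $t=0$ by a direct convexity estimate $t_kf(x_k,\xi_k/t_k)\ge\big(f(x_k,s\xi_k)-(1{-}st_k)f(x_k,0)\big)/s$. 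The two routes are morally close (both exploit monotonicity of the difference quotient), but yours avoids any explicit appeal to the monotonicity of $t\mapsto\fpers(x,t,\xi)-\fpers(x,t,0)$, replacing it with the sup representation. For (iv), the paper again leverages the same monotonicity together with the $x$-uniform bound $0\le\fpers(x,t,0)\le\beta t$ to pass from $\lim_k\finf(x_k,\xi_k)$ to $\limsup_k\fpers(x_k,t_k,\xi_k)$, whereas you upgrade the pointwise convergence $f(x,s\xi)/s\to\finf(x,\xi)$ to locally uniform convergence via Dini's theorem and close with a midpoint convexity split. Both arguments are valid; the paper's is more self-contained and avoids Dini, while yours makes the structure of the usc step transparent at the cost of invoking an extra classical theorem and requiring a short argument that Dini applies to the one-parameter family $s\in(0,\infty)$ rather than a sequence. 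One small presentational point: in your subsequence splitting for (iii) you should say explicitly (as you implicitly do) that it suffices to bound $\liminf$ along sub-subsequences where $t_k$ is either identically zero or strictly positive, since a bare appeal to "splitting a sequence into two subsequences" does not by itself control the $\liminf$ of the original sequence.
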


We stress that in the $x$-independent case $f(x,\xi)=f(\xi)$ the convexity of $f$, $\finf$, $\fpers$ already implies their continuity. Therefore, parts \eqref{item:properties_rec_persp_iii} and \eqref{item:properties_rec_persp_iv} of Lemma \ref{lem:properties_rec_persp} are relevant only in coping with $x$-dependent cases.

\begin{proof}[Proof of Lemma \ref{lem:properties_rec_persp}]
We start with part \eqref{item:properties_rec_persp_i}. For any fixed $x \in U$ and $\xi \in \R^N$, the convexity of $f$ in $\xi$ implies that
\[
  g_{x,\xi}(s) \coleq \frac{f(x,s \xi)-f(x,0)}{s}
\]
is non-decreasing in $s\in(0,\infty)$.  This implies existence of $\finf(x,\xi)=\lim_{s\to\infty}g_{x,\xi}(s)$ and validity of \eqref{eq:lin_growth_finf}. In order to check convexity of $\fpers$ in $(t,\xi)$, we fix $x \in U$, $\xi_1, \xi_2 \in \R^N$, $t_1, t_2>0$, $\lambda \in (0,1)$, and we abbreviate $\ol{t}\coleq\lambda t_1+(1{-}\lambda)t_2$ and $\ol{\xi}\coleq\lambda\xi_1+(1{-}\lambda)\xi_2$. By convexity of $f$ in $\xi$ we find
\[
  \lambda\fpers(x,t_1,\xi_1)+(1{-}\lambda)\fpers(x,t_2,\xi_2) 
  =\ol{t}\left(\frac{\lambda t_1}{\ol{t}}f\left(x,\frac{\xi_1}{t_1}\right)
    +\frac{(1{-}\lambda) t_2 }{\ol{t}} f\left(x,\frac{\xi_2}{t_2}\right)\right)
  \ge\ol{t}f\left(x,\frac{\ol{\xi}}{\ol{t}}\right)
  =\fpers(x,\ol{t},\ol{\xi})\,,
\]
which confirms convexity of $\fpers$ in $(t,\xi)\in(0,\infty)\times\R^N$. Cases with $t=0$ and the convexity claim for $\finf$ are then reached by taking limits $t\to0^+$, and the remaining claims in part \eqref{item:properties_rec_persp_i} follow straightforwardly.

\medskip

The assertions in part \eqref{item:properties_rec_persp_ii} are direct consequences of the definition.

\medskip

In order to establish part \eqref{item:properties_rec_persp_iii}, we consider a converging sequence $(x_k, t_k, \xi_k) \to (x,t,\xi)$ in $U \times [0,\infty) \times \R^N$. In case $t>0$, lower semicontinuity of $\fpers$ along the sequence is immediate by definition of $\fpers$ and lower semicontinuity of $f$. In case $t=0$ instead, taking into account 
$\finf(x,\xi)=\lim_{s\to\infty}\left(\fpers\big(x,\frac1s,\xi\big)-\frac\beta s\right)$, for any given $\eps>0$, there exists a corresponding $s\in(0,\infty)$ such that $\finf(x,\xi)\le\eps+\fpers\big(x,\frac 1s,\xi\big)-\frac\beta s$. Bringing in lower semicontinuity of $\fpers$ in the case already treated and bounding ${-}\frac\beta s\le{-}\fpers\big(x_k,\frac1s,0\big)$, we infer
\[
  \fpers(x,0,\xi)
  = \finf(x,\xi) 
  \leq\eps + \liminf_{k \to \infty} \left( \fpers\left(x_k,\frac1s,\xi_k\right) - \fpers\left(x_k,\frac1s,0\right) \right)\,.
\]
To proceed further, we deduce from $\fpers(x_k,t,\xi_k)-\fpers(x_k,t,0)=g_{x_k,\xi_k}\big(\frac1t\big)$ for $t>0$, the monotonicity of $g_{x_k,\xi_k}$ observed earlier, and again limits $t\to0^+$ that $\fpers(x_k,t,\xi_k)-\fpers(x_k,t,0)$ is non-increasing even in $t\in[0,\infty)$. Then, since we have $t_k\le\frac1s$ for $k\gg1$, we conclude
\begin{align*}
	\fpers(x,0,\xi)
    &\le\eps + \liminf_{k \to \infty} \left( \fpers\left(x_k,t_k,\xi_k\right) - \fpers\left(x_k,t_k,0\right) \right)
    = \eps +  \liminf_{k \to \infty}\fpers(x_k,t_k,\xi_k)\,.
\end{align*}
Since $\eps>0$ is arbitrary, this proves lower semicontinuity of $\fpers$ also at points $(x,0,\xi)$. In view of $\finf(x,\xi)=\fpers(x,0,\xi)$, lower semicontinuity of $\finf$ follows as well.

\medskip

Finally, we check part \eqref{item:properties_rec_persp_iv}. In the light of part \eqref{item:properties_rec_persp_iii} it remains to prove $\limsup_{k \to \infty} \fpers(x_k,t_k,\xi_k) \le \fpers(x,t,\xi)$ whenever $(x_k,t_k,\xi_k)\to(x,t,\xi)$ in $U \times [0,\infty) \times \R^N$. By continuity of $f$ this is immediate for $t>0$. In case $t=0$ instead, the continuity assumption for $\finf$ gives
\begin{equation} \label{eq:iii_auxiliary}
  \fpers(x,0,\xi)
  = \finf(x,\xi) 
  = \lim_{k\to\infty} \finf(x_k,\xi_k)
  = \lim_{k\to\infty} \left(\fpers(x_k,0,\xi_k)-\fpers(x_k,0,0)\right)\,.
\end{equation}
Then, via the equality \eqref{eq:iii_auxiliary}, via the monotonicity property exploited already in the proof of part \eqref{item:properties_rec_persp_iii}, and via the $x$-uniform bounds $0\le\fpers(x,t,0)\le\beta t$ we arrive at
\[
  \fpers(x,0,\xi)
  \ge \limsup_{k\to\infty} \left(\fpers(x_k,t_k,\xi_k)-\fpers(x_k,t_k,0)\right)
  = \limsup_{k\to\infty} \fpers(x_k,t_k,\xi_k)\,.
\]
This completes the proof of part \eqref{item:properties_rec_persp_iv} and of the lemma.
\end{proof}

We point out that any positively 1-homogeneous and convex function $g: \R^k \to [0,\infty)$ satisfies the triangle inequalities
\begin{equation} \label{anis_eq:subadd_phi}
  g(\xi+\tau) \leq g(\xi) +g(\tau)
  \qq\text{and}\qq
  g(\xi) - g(\tau) \leq g(\xi-\tau)
  \qq\text{for all } \xi, \tau \in \R^k\,.
\end{equation}
Specifically, in view of Lemma \ref{lem:properties_rec_persp}(i), the estimates in \eqref{anis_eq:subadd_phi} apply to the mappings $\xi \mapsto \finf(x,\xi)$ and 
$(t,\xi) \mapsto \fpers(x,t,\xi)$ for any fixed $x \in U$. 

\begin{lem}\label{lem:tildef_incr}
  Suppose that $f:U \times \R^N \to \R$ has linear growth and is convex in $\xi\in\R^N$. If $f\geq \finf$ holds, then $\fpers(x,t,\xi)$ is non-decreasing in $t\in[0,\infty)$ for any fixed $(x,\xi)\in U\times\R^N$.
\end{lem}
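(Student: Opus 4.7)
The plan is to fix $x \in U$ and $\xi \in \R^N$, put $\phi \colon [0,\infty) \to [0,\infty)$, $\phi(t) \coleq \fpers(x,t,\xi)$, and reduce the claim to an elementary property of convex functions on $[0,\infty)$. Concretely, I want to establish two ingredients: (a) $\phi$ is convex on $[0,\infty)$, and (b) $\phi(t) \geq \phi(0)$ for every $t \in [0,\infty)$. Given (a) and (b), the monotonicity falls out of a one-line application of the three-slope property of the convex $\phi$: for $0 < s < t$ one has
\[
  0 \leq \frac{\phi(s)-\phi(0)}{s} \leq \frac{\phi(t)-\phi(s)}{t-s},
\]
where the first inequality uses (b) and the second is the three-slope lemma; hence $\phi(t) \geq \phi(s)$. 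The remaining case $s = 0$ is precisely (b) itself.

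For ingredient (a), I would invoke Lemma \ref{lem:properties_rec_persp}\eqref{item:properties_rec_persp_i}, which gives joint convexity of $\fpers$ in $(t,\xi)$ and hence convexity of $\phi$ by restriction to the line $\{\xi\}$. For ingredient (b), I would combine the standing hypothesis $f \geq \finf$ with the positive 1-homogeneity of $\finf$ in $\xi$ to compute, for $t > 0$,
\[
  \phi(t) \,=\, t f\Big(x,\tfrac{\xi}{t}\Big) \,\geq\, t \finf\Big(x,\tfrac{\xi}{t}\Big) \,=\, \finf(x,\xi) \,=\, \phi(0),
\]
while the case $t = 0$ is trivial.

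The one point worth flagging is that convexity alone would push in the \emph{wrong} direction (one always has the classical fact that $s \mapsto (f(x,s\xi) - f(x,0))/s$ is non-decreasing in $s$, which by itself does not yield anything about $\phi$). The assumption $f \geq \finf$ is genuinely used in (b) to force $\phi \geq \phi(0)$ globally, which together with convexity unlocks the three-slope argument above. Beyond this observation, the entire proof is routine.
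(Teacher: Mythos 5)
Your proposal is correct and takes essentially the same approach as the paper: both establish $\fpers(x,t,\xi)\ge\fpers(x,0,\xi)$ from $f\geq\finf$ and the homogeneity of $\finf$, then combine this with the convexity in $t$ inherited from Lemma \ref{lem:properties_rec_persp}\eqref{item:properties_rec_persp_i} to conclude monotonicity. The paper writes the convexity inequality $\fpers(x,t_1,\xi)\le(1{-}t_1/t_2)\fpers(x,0,\xi)+(t_1/t_2)\fpers(x,t_2,\xi)\le\fpers(x,t_2,\xi)$ directly, whereas you phrase it through the three-slope lemma, but these are the same argument.
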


\begin{proof}
  From the homogeneity of $\finf$ and the assumption $f\ge\finf$ we infer
  $\fpers(x,0,\xi)
  =\finf(x,\xi)
  =t\finf\big(x,\frac\xi t\big)
  \le tf\big(x,\frac{\xi}{t}\big)
  =\fpers(x,t,\xi)$ for all $t>0$. We combine the resulting inequality with the convexity property of $\fpers$ ensured by Lemma \ref{lem:properties_rec_persp}(i) to find
  \[
    \fpers(x,t_1,\xi)
    \le\left(1{-}\frac{t_1}{t_2}\right)\fpers(x,0,\xi)+\frac{t_1}{t_2}\fpers(x,t_2,\xi)
    \le\fpers(x,t_2,\xi)
    \qq\text{whenever }t_2>t_1\ge0\,.
  \]
  This proves the claimed monotonicity.  
\end{proof}

We record one more estimate which relates $f$ and its recession function.

\begin{lem}\label{lem:est_f_finf}
  Suppose that $f:U \times \R^N \to \R$ has linear growth and is convex in $\xi\in\R^N$. Then we have
  \[
    f(x,\xi+z) \leq f(x,\xi) + \finf(x,z)
    \qq\text{for all }x \in U \text{ and\/ } \xi,z \in \R^N\,.
  \]
\end{lem}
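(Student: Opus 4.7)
The plan is to obtain the desired inequality by a one-parameter convex combination argument, realizing $\xi+z$ as a convex combination that, in an appropriate limit, separates the $\xi$-part (which will recover $f(x,\xi)$ by continuity) from the $z$-part (which will produce the recession function $\finf(x,z)$ by its very definition).

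Concretely, fix $x\in U$ and $\xi,z\in\R^N$. For every $t\in(0,1)$ observe the exact identity
\[
  \xi+z=(1{-}t)\,\frac{\xi}{1{-}t}+t\,\frac{z}{t}\,.
\]
Since $f(x,\,.\,)$ is convex on $\R^N$, this yields
\[
  f(x,\xi+z)\le(1{-}t)\,f\!\left(x,\frac{\xi}{1{-}t}\right)+t\,f\!\left(x,\frac{z}{t}\right)\,.
\]

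Now I send $t\to0^+$. A real-valued convex function on $\R^N$ is automatically continuous (alternatively, one may invoke the linear-growth bound to bound the difference quotients), so $\xi/(1{-}t)\to\xi$ implies $(1{-}t)\,f\!\left(x,\frac{\xi}{1{-}t}\right)\to f(x,\xi)$. For the second summand, Definition \ref{defi:f-finf-perspective} gives directly $t\,f(x,z/t)\to\finf(x,z)$ as $t\to0^+$. Combining these two limit passages produces the claimed estimate $f(x,\xi+z)\le f(x,\xi)+\finf(x,z)$, which completes the argument.

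There is essentially no hard step here: the only small point requiring attention is the continuity of $\xi\mapsto f(x,\xi)$ used to pass to the limit in the $\xi$-slot, but this is a standard consequence of finite-valued convexity on $\R^N$ (and in any case follows from the assumed linear growth \eqref{eq:lin_growth}).
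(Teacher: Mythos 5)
Your proof is correct and uses the same convex splitting $\xi+z=(1{-}t)\frac{\xi}{1{-}t}+t\frac{z}{t}$ followed by the limit $t\to0^+$ that the paper employs; you merely spell out the continuity justification that the paper leaves implicit.
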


\begin{proof}
  The convexity assumption yields 
  \[
    f(x,\xi+z)
    \le(1{-}t)f\left(x,\frac{\xi}{1{-}t}\right)+tf\left(x,\frac{z}{t}\right)
    \qq\text{for all } t \in (0,1)\,,
  \]
  and the claim follows by sending $t\to0^+$.
\end{proof}

\begin{rem} \label{rem:f_finf_scalar}
  If $f:U \times \R^N \to \R$ has linear growth and is convex in $\xi\in\R^N$, then it holds
  \[
    \pm \nabla_\xi f(x,\xi) \ip \nu \leq \finf(x,\pm \nu)
    \qq\text{for all } x \in U, \text{ \ae{ }} \xi \in \R^N, \text{ and all } \nu \in \R^N. 
  \]
\end{rem}

\begin{proof}
  We combine the supporting hyperplane inequality $f(x,z)\geq f(x,\xi)+\nabla_\xi f(x,\xi) \ip (z{-}\xi)$ for $z\coleq \xi \pm \nu$ with the estimate $f(x,\xi\pm\nu)\le f(x,\xi)+\finf(x,\pm\nu)$ provided by Lemma \ref{lem:est_f_finf}.
\end{proof}

\subsection{General convex functionals of measures}\label{subsec:func_meas}

The functionals of measures we use in this paper go back to \cite{GofSer64,GMS79} and, in the first instance, are introduced in analogy with Definition \ref{defi:anis_TV}, just now with $(\LN,\nu)$ in place of $\D w$:

\begin{defi}[functionals of measures]\label{defi:meas_func}
  Given an open set $U\subseteq\R^N$ and an arbitrary Borel function $\fpers\colon U \times [0,\infty) \times \R^N \to [0,\infty)$ which is positively 1-homogeneous in its variables $(t,\xi) \in [0,\infty) \times \R^N$, we may understand $f(x,\xi)\coleq\fpers(x,1,\xi)$ for $(x,\xi)\in U\times\R^N$ and then obtain a functional of measures $\nu$ by setting
  \begin{equation}\label{eq:meas_func}
    \int_U f(\,.\,,\nu)
    \coleq\int_U \fpers\left(\,.\,,\frac{\d\LN}{\d\mu},\frac{\d\nu}{\d\mu}\right) \,\d\mu
    \qq\qq\text{for }\nu \in {\RM}_{\loc}(U,\R^N)\,,
  \end{equation}
  where $\mu$ is any non-negative Radon measure on $U$ such that $\LN\ll\mu$ and $|\nu|\ll\mu$ (for instance, $\mu=\LN+|\nu|$).
\end{defi}

It follows from the Radon-Nikod\'ym theorem and the homogeneity of $\fpers$ that the quantity defined in \eqref{eq:meas_func} does not depend on the choice of $\mu$. However, the notation is truly well-chosen only if $\fpers$ is the perspective function of an a priori given function $f\colon U \times \R^N \to [0,\infty)$, where for our purposes we assume that also $f$ is a Borel function and, for consistency with the underlying Definition \ref{defi:f-finf-perspective}, has linear growth and is convex in $\xi$. In this situation, it is well known \cite{GofSer64,GMS79} that the functional can be split in accordance with the Lebesgue decomposition $\nu=\frac{\d\nu}{\d\LN}\LN+\nu^\s$ as
\begin{equation}\label{eq:decomposition_meas_func}
  \int_Uf(\,.\,,\nu)
  =\int_Uf\left(.\,,\frac{\d\nu}{\d\LN}\right)\,\d\LN
  +\int_U\finf\left(.\,,\frac{\d\nu^\s}{\d|\nu^\s|}\right)\,\d|\nu^\s|\,.
\end{equation}
Specifically, for $\nu=g\LN$ with $g\in\L^1_\loc(U,\R^N)$, this reduces to $\int_Uf(\,.\,,\nu)=\int_Uf(\,.\,,g)\,\d\LN$. In particular, the preceding applies to the derivative measure $\nu=\D w$ of $w\in\BV_\loc(U)$. In this case, the Lebesgue decomposition takes the form $\D w=(\nabla w)\LN + \Ds w$, and the equality \eqref{eq:decomposition_meas_func} turns into
\[
  \int_Uf(\,.\,,\D w)
  = \int_{U} f\left( \,.\,, \nabla w \right) \,\d\mathcal{L}^N + \int_{U} \finf \left( .\,,\frac{\d \Ds w}{\d|\Ds w|}\right) \d|\Ds w|\,,
\]
and specifically, for $w\in\W^{1,1}_\loc(U)$, one gets $\int_Uf(\,.\,,\D w)=\int_Uf(\,.\,,\nabla w)\,\d\LN$.

\medskip

Even without any homogeneity requirement for $f$, the Reshetnyak semicontinuity and continuity theorems apply to functionals of measures in the preceding sense. As was the case for the earlier Theorems \ref{thm:Resh_LSC_hom_BV} and \ref{thm:Resh_cont_hom_BV}, this may be read off from  \cite[Theorems 2.38, 2.39]{AFP00}, now applied to $\fpers$ and  measures of type $(\LN,\D w)\in\RM(U,\R^{N+1})$ and decisively based on the 1-homogeneity of $\fpers$ in $(t,\xi)$; compare \cite[Section 2]{GMS79}, \cite[Section 4]{Delladio91}, \cite[Appendix]{KriRin10b}, \cite[Remark 2.5]{BecSch13}, for instance. Also taking into account the further properties of $\fpers$ from Lemma \ref{lem:properties_rec_persp}, this yields the following statements.

\begin{thm}[Reshetnyak semicontinuity; non-homogeneous version] \label{thm:Resh_LSC_inhom_BV}
  For an open set\/ $U\subseteq\R^N$ such that $|U|<\infty$, assume that $f\colon U \times \R^N \to [0,\infty)$ has linear growth, is convex in $\xi$, and is lower semicontinuous in $(x,\xi)$. Then, whenever a sequence $(u_k)_k$ in $\BV(U)$ converges in $\L^1(U)$ to $u\in\BV(U)$, there holds
  \[
    \liminf_{k \to \infty} \int_U f(\,.\,,\D u_k) \geq \int_U f(\,.\,,\D u)\,.
  \]
\end{thm}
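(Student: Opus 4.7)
The plan is to reduce the non-homogeneous statement to Reshetnyak semicontinuity for $1$-homogeneous integrands on $\R^{N+1}$-valued Radon measures, exactly as the paragraph preceding the statement hints: I would rewrite the functional via the perspective integrand $\fpers$ applied to the augmented measures $\nu_k\coleq(\LN,\D u_k)$ and $\nu\coleq(\LN,\D u)$ in $\RM(U,\R^{N+1})$. Here the ``extra'' first component, which will carry the Lebesgue part, is meaningful thanks to the hypothesis $|U|<\infty$ that makes $\nu_k,\nu$ finite (not merely local) measures on $U$.

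As a first, routine step, I may assume the liminf is finite and pass to a (not relabeled) subsequence realizing it. Combining the coercivity $f(x,\xi)\ge\alpha|\xi|$ from \eqref{eq:lin_growth} with $u_k\to u$ in $\L^1(U)$ yields a uniform $\BV(U)$-bound; weak-$\ast$ $\BV$ compactness together with $\L^1$-convergence identifying the limit then give $\D u_k\rightharpoonup^\ast\D u$ in $\RM(U,\R^N)$ and consequently $\nu_k\rightharpoonup^\ast\nu$ in $\RM(U,\R^{N+1})$.

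The second step recasts the functional. Choosing $\mu_k\coleq\LN+|\D u_k|$ as reference measure in Definition \ref{defi:meas_func} and using $\fpers(x,1,\xi)=f(x,\xi)$, one has
\[
  \int_U f(\,.\,,\D u_k)=\int_U\fpers\!\left(.\,,\frac{\d\LN}{\d\mu_k},\frac{\d\D u_k}{\d\mu_k}\right)\d\mu_k,
\]
and similarly for $u$. By Lemma \ref{lem:properties_rec_persp}\eqref{item:properties_rec_persp_i},\eqref{item:properties_rec_persp_iii}, the integrand $\fpers$ is positively $1$-homogeneous, convex, and lower semicontinuous in $(t,\xi)$. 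This brings us into the precise setting of Reshetnyak's semicontinuity theorem for $1$-homogeneous integrands on $\R^{N+1}$-valued Radon measures \cite[Theorem 2.38]{AFP00}, which I would invoke to conclude
\[
  \int_U f(\,.\,,\D u)\le\liminf_{k\to\infty}\int_U f(\,.\,,\D u_k).
\]

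The main obstacle is a minor technical hitch: $\fpers$ is only defined on $U\times[0,\infty)\times\R^N$, while \cite[Theorem 2.38]{AFP00} expects an integrand on $U\times\R^{N+1}$. I would handle this by extending $\fpers(x,t,\xi)\coleq+\infty$ for $t<0$, which preserves convexity, positive $1$-homogeneity, and lower semicontinuity in the $[0,+\infty]$-valued framework covered by that reference. Since the first component of $\nu_k$ is the non-negative measure $\LN$, the Radon-Nikod\'ym density $\d\LN/\d\mu_k$ takes values in $[0,1]$ $\mu_k$-\ae, so the $+\infty$ branch of the extension never contributes to any integral in play.
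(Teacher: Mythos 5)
Your argument is precisely the one the paper itself sketches in the paragraph just before the theorem: it states the result "may be read off from \cite[Theorems 2.38, 2.39]{AFP00}, now applied to $\fpers$ and measures of type $(\LN,\D w)\in\RM(U,\R^{N+1})$ and decisively based on the 1-homogeneity of $\fpers$ in $(t,\xi)$," which is exactly your reduction via the perspective integrand and the augmented measures. You have simply filled in the routine details (the coercivity/weak-$\ast$ compactness step and the harmless $+\infty$ extension of $\fpers$ to $t<0$, which the non-negativity of the first component makes innocuous), so the proposal is correct and matches the paper's approach.
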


\begin{thm}[Reshetnyak continuity; non-homogeneous version] \label{thm:Resh_cont_inhom_BV}
  For an open set\/ $U \subseteq \R^N$ such that $|U|<\infty$, assume that $f\colon U \times \R^N \to [0,\infty)$ has linear growth and that both $f$ and\/ $\finf$ are continuous in $(x,\xi)$. Then, whenever a sequence $(u_k)_k$ in $\BV(U)$ converges area-strictly to $u\in\BV(U)$, there holds
  \[
    \liminf_{k \to \infty} \int_U f(\,.\,,\D u_k) 
    = \int_U f(\,.\,,\D u)\,.
  \]
\end{thm}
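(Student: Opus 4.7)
The plan is to reduce the non-homogeneous continuity statement to the measure-level Reshetnyak continuity theorem by applying it to the perspective function $\fpers$ and to the auxiliary $\R^{N+1}$-valued Radon measures
\[
  \widehat{\D u_k}\coleq(\LN,\D u_k)\in\RM(U,\R^{N+1})\,,\qq\widehat{\D u}\coleq(\LN,\D u)\in\RM(U,\R^{N+1})\,.
\]
By Lemma \ref{lem:properties_rec_persp}\eqref{item:properties_rec_persp_i} and \eqref{item:properties_rec_persp_iv}, $\fpers$ is positively 1-homogeneous in its variable $(t,\xi)\in[0,\infty)\times\R^N$, has linear growth there, and is continuous on $U\times[0,\infty)\times\R^N$. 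For formal compatibility with measure-level statements (where the density variable ranges in $\R^{N+1}$), one extends $\fpers$ to $U\times\R\times\R^N$ in an arbitrary continuous, positively 1-homogeneous way. Since the first component of $\widehat{\D u_k}$ is the non-negative measure $\LN$, the extension plays no role in the integrals at issue.

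First, I would recast the functional. With the convention $f(x,\xi)=\fpers(x,1,\xi)$, Definition \ref{defi:meas_func} applied to the underlying measure $\mu=|\widehat{\D u_k}|$ — with densities $\frac{\d\LN}{\d|\widehat{\D u_k}|}$ and $\frac{\d\D u_k}{\d|\widehat{\D u_k}|}$ jointly forming $\frac{\d\widehat{\D u_k}}{\d|\widehat{\D u_k}|}$ — combined with the 1-homogeneity of $\fpers$ gives
\[
  \int_U f(\,.\,,\D u_k)=\int_U\fpers\Big(\,.\,,\tfrac{\d\widehat{\D u_k}}{\d|\widehat{\D u_k}|}\Big)\,\d|\widehat{\D u_k}|
\]
and likewise with $u$ in place of $u_k$.

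Second, I would translate the area-strict convergence $u_k\to u$ into strict convergence of the measures $\widehat{\D u_k}\to\widehat{\D u}$ in $\RM(U,\R^{N+1})$. The $\L^1(U)$-convergence $u_k\to u$ yields $\D u_k\rightharpoonup\D u$ as $\R^N$-valued measures tested against $\C_\c^0(U)$, and because the first components coincide identically with $\LN$, one gets $\widehat{\D u_k}\rightharpoonup\widehat{\D u}$ weakly in $\RM(U,\R^{N+1})$. At the same time, the identity $|\widehat{\D u_k}|(U)=\int_U\sqrt{1+|\D u_k|^2}$ and the area-strict convergence hypothesis give $|\widehat{\D u_k}|(U)\to|\widehat{\D u}|(U)$. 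Taken together, these two properties are exactly strict convergence of $\R^{N+1}$-valued measures.

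Third, I would invoke the measure-level Reshetnyak continuity theorem (\cite[Theorem 2.39]{AFP00}) for the positively 1-homogeneous and continuous integrand $\fpers$ with linear growth and for the strictly convergent measures $\widehat{\D u_k}\to\widehat{\D u}$. This yields
\[
  \lim_{k\to\infty}\int_U\fpers\Big(\,.\,,\tfrac{\d\widehat{\D u_k}}{\d|\widehat{\D u_k}|}\Big)\,\d|\widehat{\D u_k}|
  =\int_U\fpers\Big(\,.\,,\tfrac{\d\widehat{\D u}}{\d|\widehat{\D u}|}\Big)\,\d|\widehat{\D u}|\,,
\]
which by the first step is precisely the claim $\lim_{k\to\infty}\int_Uf(\,.\,,\D u_k)=\int_Uf(\,.\,,\D u)$.

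The main step — and the only one requiring any genuine care — is the second, namely the equivalence between area-strict convergence of $\BV$ functions and strict convergence of the associated measures $(\LN,\D\cdot)$. Once this identification is in hand, the 1-homogeneous and continuous nature of $\fpers$ inherited from $f$ and $\finf$ makes the remainder a direct application of the classical measure-level theorem, without further input from the structure of $\BV$.
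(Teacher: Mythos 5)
Your proposal is correct and takes exactly the route the paper indicates: the paper proves this by referring to \cite[Theorem 2.39]{AFP00} applied to the perspective function $\fpers$ and the measures $(\LN,\D w)\in\RM(U,\R^{N+1})$, which is precisely what you spell out, including the reformulation via Definition \ref{defi:meas_func}, the continuity and 1-homogeneity of $\fpers$ from Lemma \ref{lem:properties_rec_persp}, and the identification of area-strict convergence with strict convergence of $(\LN,\D u_k)$. The only (harmless) shortcut is in passing from $\L^1$-convergence plus the area-total-variation bound to weak-$\ast$ convergence of $\D u_k$ in $\RM(U,\R^N)$ -- one should note the uniform bound $\sup_k|\D u_k|(U)<\infty$ implicit in area-strict convergence to upgrade distributional convergence to weak-$\ast$ convergence against $\C_0(U)$.
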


\medskip

As mentioned in the introduction, it is nowadays well known that a Dirichlet boundary condition can be incorporated into the existence theory of linear-growth functionals via an additional boundary term. Since this involves boundary traces, we now restrict ourselves to Lipschitz domains, for which the boundary trace theorem  \cite[Theorem 3.87]{AFP00} applies. We give the following definition.

\begin{defi}[functionals of measures with boundary penalization term]\label{defi:func_meas_boundary}
  We consider an open bounded set $\Om\subseteq\R^N$ with Lipschitz boundary and a Borel function $f\colon\ol{\Om}\times\R^N\to[0,\infty)$ which has linear growth and is convex in $\xi$. Given $u_0 \in \W^{1,1}(\R^N)$, we introduce a functional $\MF_{u_0}^0$ by setting
  \[
    \MF_{u_0}^0[w] 
    \coleq\int_\Om f(\,.\,,\D w)
    +\int_{\partial\Om}\finf(\,.\,,(w{-}u_0)\nu_\Om)\,\d\H^{N-1}
    \qq\qq\text{for }w\in\BV(\Om)\,,
  \]
  where the occurrences of $w$ and $u_0$ in the boundary integral are understood as traces.
\end{defi}

The technical convenience of Definition \ref{defi:func_meas_boundary} lies partially in the fact that the additional term can be naturally incorporated into the functional of measures on an enlarged domain. In fact, for the extension $\ol{w}\coleq w\1_\Om+u_0\1_{\R^N \setminus\ol{\Om}}\in\BV(\R^N)$ of $w\in\BV(\Om)$ via the values of $u_0$, from \cite[Theorem 3.84]{AFP00} we get $\D\ol{w}=\D w$ in $\Om$ and $\D\ol{w}=(\nabla u_0)\LN$ in $\R^N\setminus\ol{\Om}$, but also $\D\ol{w}=(w{-}u_0)\nu_\Om\H^{n-1}$ at $\partial\Om$. Therefore, for an arbitrary open set $\pOm\subseteq\R^N$ such that $\Om\Subset\pOm$ and $|\pOm|<\infty$, with the understanding that $f$ suitably extends to $\pOm\times\R^N$, the definition of $\MF_{u_0}^0$ can be recast as
\begin{equation}\label{eq:F-enlarge-domain}
  \MF_{u_0}^0[w]
  = \int_\pOm f(\,.\,,\D\ol{w})
  - \int_{\pOm \setminus \ol{\Om}} f\left( \,.\,, \nabla u_0 \right) \,\d\mathcal{L}^N\,.
\end{equation}

\subsection[Product structure and slicing of \texorpdfstring{$\H^N$}{Hausdorff measure}]{\boldmath Product structure and slicing of $\H^N$}

Finally, we collect some useful observations on the product structure of Hausdorff measures.

\smallskip

In these regards, we follow the terminology of \cite[Definition 2.57]{AFP00}: We say that a set $R\subseteq\R^L$ is countably $K$-rectifiable if there holds $R\subseteq\bigcup_{i=1}^\infty F_i(\R^K)$ for countably many Lipschitz maps $F_i\colon\R^K\to\R^L$. Moreover, we say that a set $R\subseteq\R^L$ is countably $\H^K$-rectifiable if there holds $R\subseteq R'\cup R_0$ for a countably $K$-rectifiable set $R'\subseteq\R^L$ and an $\H^K$-negligible set $R_0\subseteq\R^L$.

By \cite[3.2.23]{Fed69} one has $\H^N(R\times S)=\H^K(R)\,\H^{K'}(S)$ for a countably $K$-rectifiable set $R\subseteq\R^L$, a countably $\H^{K'}$-rectifiable set $S\subseteq\R^{L'}$, and $N=K{+}K'$. For our purposes this will be relevant for $K=L=1$, $K'=N{-}1$, $L'=N$, where we recast the statement and briefly review the proof as follows.

\begin{lem}[product structure on products of rectifiable sets] \label{lem:prod_rectifiable}
  For a countably $\H^{N-1}$-rectifiable Borel set $S\subseteq\R^N$, we have
  \[
    \H^N\resmes(\R\times S)=\mathcal{L}^1\otimes(\H^{N-1}\resmes S)
    \qquad\text{as measures on }\R^{N+1}\,.
  \]
  In particular, for a Borel set $A\subset\R$, a countably $\H^{N-1}$-rectifiable Borel set $S\subset\R^N$, and a Borel function $h\colon A\times S\to{[0,\infty)}$, it is
  \[
    \int_{A\times S}h(t,x)\,\d\H^N(t,x)=\int_A\int_Sh(t,x)\,\d\H^{N-1}(x)\,\d t\,.
  \]
\end{lem}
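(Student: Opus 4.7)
The plan is to establish the measure identity on the $\pi$-system of Borel rectangles in $\R\times\R^N$ by invoking Federer's product formula \cite[3.2.23]{Fed69} for rectifiable sets, then extend to the full Borel $\sigma$-algebra of $\R^{N+1}$ via a standard uniqueness argument, and finally deduce the integral identity from Tonelli's theorem for the product measure together with monotone approximation of non-negative Borel integrands.

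For the first step, I would fix Borel sets $B\subseteq\R$ and $E\subseteq\R^N$. Then $B$ is trivially countably $1$-rectifiable via the identity map on $\R$, and $E\cap S$ is countably $\H^{N-1}$-rectifiable as a Borel subset of the countably $\H^{N-1}$-rectifiable set $S$ (both verifications are immediate from the rectifiability definitions recalled in the preliminaries). Federer's product formula applied to $B\times(E\cap S)\subseteq\R^{N+1}$ then yields
\[
  \H^N\bigl((B\times E)\cap(\R\times S)\bigr)
  =\H^N\bigl(B\times(E\cap S)\bigr)
  =\H^1(B)\,\H^{N-1}(E\cap S)
  =\mathcal{L}^1(B)\,(\H^{N-1}\resmes S)(E)\,,
\]
using $\H^1=\mathcal{L}^1$ on $\R$ in the final step. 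The right-hand side is precisely $(\mathcal{L}^1\otimes(\H^{N-1}\resmes S))(B\times E)$ by definition of the product measure, so the two measures in the statement agree on every Borel rectangle.

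For the second step, since $S$ is a countable union of Lipschitz images of $\R^{N-1}$ modulo an $\H^{N-1}$-null set, both measures $\H^N\resmes(\R\times S)$ and $\mathcal{L}^1\otimes(\H^{N-1}\resmes S)$ are $\sigma$-finite on the $\pi$-system of Borel rectangles in $\R^{N+1}$. The uniqueness part of Carathéodory's extension theorem (or equivalently a $\pi$-$\lambda$ argument) then upgrades the agreement on rectangles to equality on the full Borel $\sigma$-algebra of $\R^{N+1}$, giving the first assertion. The second assertion is obtained by restricting to the Borel set $A\times S$, applying the measure identity to $\1_{A\times S}h$, and invoking Tonelli's theorem for the $\sigma$-finite product $\mathcal{L}^1\otimes(\H^{N-1}\resmes S)$; the passage from indicator functions to arbitrary non-negative Borel $h$ is then routine via simple-function approximation and monotone convergence. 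There is no substantive obstacle here --- the only care needed is the bookkeeping verification that Borel subsets of the rectifiable sets in question are themselves rectifiable in the precise sense required by Federer's formula.
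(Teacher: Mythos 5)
Your proof is correct, but it takes a genuinely different route from the paper's. You invoke Federer's product formula \cite[3.2.23]{Fed69} as a black box to get agreement of the two measures on Borel rectangles, and then pass to the full Borel $\sigma$-algebra by a $\pi$-$\lambda$/Carath\'eodory uniqueness argument, noting $\sigma$-finiteness of $\H^{N-1}\resmes S$. The paper, while citing \cite[3.2.23]{Fed69} in its commentary, deliberately chooses \emph{not} to rely on that result and instead reproves the needed instance $\H^N(A\times S)=\mathcal{L}^1(A)\,\H^{N-1}(S)$ from scratch: it handles the $\H^{N-1}$-negligible portion of $S$ directly from the definition of Hausdorff measures, decomposes the rectifiable part into one-to-one $\C^1$ images $S=F(D)$ via \cite[Lemma 3.1.1]{Sim83}, lifts $F$ to $F_\vardiamond(t,x)=(t,F(x))$, and applies the area formula twice using $\mathrm{J}F_\vardiamond=\mathrm{J}F$. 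So the paper's proof is more self-contained and elementary (it avoids the general coarea/Eilenberg machinery behind Federer's product formula), at the cost of the decomposition step; yours is shorter and shifts the weight entirely to the citation. Both are sound. One small caveat in your write-up: Federer's \cite[3.2.23]{Fed69} is stated under a finiteness (rather than mere $\sigma$-finiteness) hypothesis, so the application to a general rectangle $B\times(E\cap S)$ technically needs a preliminary exhaustion by pieces of finite $\H^1$- and $\H^{N-1}$-measure followed by monotone convergence; this is routine but belongs to the ``bookkeeping'' you flag at the end and should be stated as such rather than only the Borel-subset-of-rectifiable point.
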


\begin{proof}[Sketch of proof]
  By the product structure of Borel $\sigma$-algebras and Fubini's theorem it is enough to prove $\H^N(A\times S)=\mathcal{L}^1(A)\,\H^{N-1}(S)$ for a Borel set $A\subset\R$ and a countably $\H^{N-1}$-rectifiable Borel set $S\subset\R^N$. For $\H^{N-1}$-negligible $S$, this is easy to check with the definition of Hausdorff measures. Then, relying on \cite[Lemma 3.1.1]{Sim83} and possibly decomposing $S$, we can further assume $S=F(D)$ for a single one-to-one $\C^1$ mapping $F\colon\R^{N-1}\to\R^N$ and a Borel set $D\subseteq\R^{N-1}$. With $F_\vardiamond\colon\R^N\to\R^{N+1}$ given by $F_\vardiamond(t,x)\coleq(t,F(x))$, we clearly get $A\times S=F_\vardiamond(A\times D)$ and $\mathrm{J}F_\vardiamond(t,x)=\mathrm{J}F(x)$ for the Jacobians $\mathrm{J}F_\vardiamond\coleq{\det}\big(\D F_\vardiamond^\mathrm{T}\,\D F_\vardiamond\big)$ and $\mathrm{J}F\coleq{\det}\big(\D F^\mathrm{T}\,\D F\big)$. Therefore, a double application of the area formula confirms
  \[
    \H^N(A\times S)
    =\int_{A\times D}\mathrm{J}F_\vardiamond(t,x)\,\d(t,x)
    =\mathcal{L}^1(A)\int_D\mathrm{J}F(x)\,\d x
    =\mathcal{L}^1(A)\,\H^{N-1}(S)\,,
  \]
  as required.
\end{proof}

Specifically, for negligible sets $Z$, the preceding result is improved by the subsequent one, which applies even if a negligible $Z$ is \emph{not} contained in $\R\times S$ for some countably rectifiable $S$.

\begin{lem}[slicing negligible sets]\label{lem:Eilenberg}
Consider an arbitrary $\H^N$-negligible set\/ $Z\subseteq\R^{N+1}$. Then the section ${}_tZ\coleq\{x\in\R^N:(t,x)\in Z\}$ is $\H^{N-1}$-negligible for \ae{} $t\in\R$.
\end{lem}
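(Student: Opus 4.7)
My plan is to deduce this directly from Eilenberg's coarea inequality applied to the $1$-Lipschitz projection $\pi\colon\R^{N+1}\to\R$ given by $\pi(t,x)\coleq t$. The fibers $\pi^{-1}(\{t\})=\{t\}\times\R^N$ intersect $Z$ precisely in $\{t\}\times{}_tZ$, a set whose $\H^{N-1}$-measure equals $\H^{N-1}({}_tZ)$. Eilenberg's inequality then yields
\[
  \int^\ast_\R\H^{N-1}({}_tZ)\dt \le C_N\,\H^N(Z) = 0
\]
for a dimensional constant $C_N\in(0,\infty)$ and with an upper integral on the left, from which $\H^{N-1}({}_tZ)=0$ follows for \ae{} $t\in\R$, which is the claim.

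For a fully self-contained treatment I would reprove the required instance of Eilenberg's inequality by a direct covering scheme. Given $\eps,\delta>0$, I would cover $Z$ by countably many open balls $\B_{r_i}((t_i,x_i))$ with $r_i<\delta$ and $\sum_i r_i^N<\eps$ \ka available thanks to $\H^N(Z)=0$, up to passing to the equivalent spherical Hausdorff measure\kz. For each $t\in\R$, the slice ${}_tZ$ is then covered by the $N$-balls $\B_{s_i(t)}(x_i)$ with $s_i(t)\coleq\sqrt{r_i^2-(t{-}t_i)^2}\le r_i$ taken over those $i$ with $|t{-}t_i|<r_i$. This produces the pointwise bound
\[
  \H^{N-1}_\delta({}_tZ)\le c_N\sum_i\1_{(t_i-r_i,t_i+r_i)}(t)\,r_i^{N-1}\,;
\]
integrating in $t$, interchanging sum and integral by Tonelli, and using $\int\1_{(t_i-r_i,t_i+r_i)}\dt=2r_i$ one obtains $\int^\ast\!\H^{N-1}_\delta({}_tZ)\dt\le 2c_N\,\eps$ in the upper-integral formulation. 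Letting $\eps\to 0^+$ at fixed $\delta$ and then $\delta\to 0^+$ by monotone convergence on upper integrals completes the argument.

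The only subtle point I anticipate is a measurability issue: for a generic $\H^N$-negligible $Z\subseteq\R^{N+1}$, not assumed to carry any Borel or rectifiable structure, the slice-measure function $t\mapsto\H^{N-1}({}_tZ)$ need not be Lebesgue measurable, which is precisely why the estimate has to be formulated with an upper integral. This subtlety is inconsequential for the \ae{}-vanishing conclusion required here, so the lemma follows along either route.
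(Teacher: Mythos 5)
Your proposal is correct and follows essentially the same route as the paper: both note that the lemma is an instance of Eilenberg's inequality and then give a self-contained covering argument at the level of the Hausdorff premeasures $\H^{N-1}_\delta$, formulated with an upper integral precisely because of the measurability issue you flag. The one (small) deviation is your final passage $\delta\to0^+$ via monotone convergence for upper integrals: the paper instead applies the $\eps\to0$ conclusion for $\delta=1/k$, $k\in\N$, and takes a countable union of $\mathcal{L}^1$-null exceptional sets to get $\H^{N-1}_{1/k}({}_tZ)=0$ simultaneously for all $k$ off a single null set, then lets $k\to\infty$ pointwise --- a trick it adopts explicitly to avoid the limit procedure for upper integrals from \cite[2.10.24]{Fed69}; your step is also valid but uses exactly what the paper set out to circumvent.
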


Lemma \ref{lem:Eilenberg} follows straightforwardly from Eilenberg's inequality in the form of \cite[2.10.25]{Fed69}. Nonetheless, we here include an elementary deduction, which involves a basic version of Eilenberg's inequality for Hausdorff premeasures $\H^N_\delta$ only and thus avoids using the limit procedures for upper integrals in \cite[2.10.24]{Fed69}.

\begin{proof}[Proof of Lemma \ref{lem:Eilenberg}]
  We first observe that $\H^N(Z)=0$ implies $\H^N_\delta(Z)=0$ for the Hausdorff premeasure $\H^N_\delta$ with arbitrary $\delta>0$. For fixed $\delta>0$, we then consider an arbitrary sequence $(C_i)_i$ of sets $C_i\subseteq\R^{N+1}$ such that $\mathrm{diam}\,C_i<\delta$ and $Z\subseteq\bigcup_{i=1}^\infty C_i$. We write $\mathrm{p}(C_i)\coleq\{t\in\R:(t,x)\in C_i\text{ for some }x\in\R^N\}$ and use the basic estimates $\mathcal{L}^1(\mathrm{p}(C_i))\le\mathrm{diam}\,C_i$ and
  $\H^{N-1}_\delta({}_t(C_i))\le\omega_{N-1}\big(\frac{\mathrm{diam}\,C_i}2\big)^{N-1}$ in deriving (with the upper integral, since at this stage we do not yet have any measurability of $t\mapsto\H^{N-1}_\delta({}_tZ)$)
  \[
    \,\,\rule{0pt}{3ex}^\ast\hspace{-1.7ex}\int_\R\H^{N-1}_\delta({}_tZ)\,\d t
    \le\sum_{i=1}^\infty\,\,\rule{0pt}{3ex}^\ast\hspace{-1.7ex}\int_\R\H^{N-1}_\delta({}_t(C_i))\,\d t
    =\sum_{i=1}^\infty\,\,\rule{0pt}{3ex}^\ast\hspace{-1.7ex}\int_{\mathrm{p}(C_i)}\H^{N-1}_\delta({}_t(C_i))\,\d t\\
    \le2\omega_{N-1}\sum_{i=1}^\infty\bigg(\frac{\mathrm{diam}\,C_i}2\bigg)^N\,.
  \]
  In view of $\H^N_\delta(Z)=0$, the right-hand side of the preceding estimate can be made arbitrarily small, and we infer $\H^{N-1}_\delta({}_tZ)=0$ for \ae{} $t\in\R$. This conclusion, applied for $\delta=1/k$, $k\in\N$, gives a common $\mathcal{L}^1$-negligible set $E\subseteq\R$ such that $\H^{N-1}_{1/k}({}_tZ)=0$ for all $t\in E^\c$ and all $k\in\N$. Then we deduce $\H^{N-1}({}_tZ)=0$ for all $t\in E^\c$ and have verified the claim.
\end{proof}

\section{Statement of the main results}\label{sec:results}

As set out in the introduction, we aim at an existence theory for $\BV$-minimizers of the general functional $\MF_{u_0}^\mu$ introduced in \eqref{eq:MF_intro} and building on \eqref{eq:mu_wmp}. With the notation of Section \ref{subsec:func_meas} this functional takes the form
\begin{equation}\label{eq:MF}
  \MF_{u_0}^\mu[w]=\MF_{u_0}^0[w]+\llangle\mu_\pm\,;w^\mp\rrangle
  \qq\qq\text{for }w\in\BV(\Om)\,.
\end{equation}
In order to precisely state our results, from here on we generally understand that $\Om$ denotes an open and bounded subset of $\R^N$ with Lipschitz boundary, and we now specify the precise hypotheses, first for the integrand $f$ of $\MF_{u_0}^0$ and then for the measures $\mu_\pm$.

\begin{assum}[admissible integrands] \label{assum:f}
  For an integrand $f\colon\ol{\Om}\times\R^N\to[0,\infty)$ in the variables $(x,\xi)\in\ol{\Om}\times\R^N$, we impose the following set of assumptions\textup{:}
  
\smallskip 

\begin{enumerate}[\hspace{2ex}\rm(1)]
\renewcommand\theenumi{H\arabic{enumi}}
\item The function $f$ has linear growth in $\xi$, that is, there exist $\beta\ge\alpha>0$ such that\label{assum:H1}
\[
  \alpha|\xi| \leq f(x,\xi) \leq \beta(|\xi|+1)
  \qq\text{holds for all }(x,\xi) \in \Om \times \R^N\,.
\]
\item The function $f$ is convex in $\xi\in\R^N$.\label{assum:H2}
\item The function $f$ and the recession function $\finf$ are continuous in $(x,\xi)\in\ol{\Om}\times\R^N$.\label{assum:H3}
\item There is a constant $M\in\R$ such that\label{assum:H4}
\[
  f(x,\xi) \geq \finf(x,\xi)-M
  \qq\text{holds for all }(x,\xi) \in \Om \times \R^N\,.
\]
\end{enumerate}
\end{assum}

\smallskip

Here, \eqref{assum:H1}--\eqref{assum:H3} are standard assumptions, while the mild extra requirement \eqref{assum:H4} is less usual. In principle, we believe that one may further generalize the framework by allowing discontinuity of $f$ in $x$ (as long as some continuity for $\finf$ is kept; compare \cite{KriRin10a,BecSch15}), and by dispensing with \eqref{assum:H4} (compare the discussion in the introduction and in Remarks \ref{rem:H4_for_LSC} and \ref{rem:H4_for_coercivity}). However, for our taken approach, the above form of Assumption \ref{assum:f} seems just right.

\medskip

For the measures $\mu_\pm$, a preliminary mild requirement will be their admissibility in the following sense.

\begin{defi}[admissible measures]\label{defi:mu}
  We call a non-negative Radon measure $\mu$ on $\Om\subseteq\R^N$ admissible if it satisfies
  \[
    \mu(Z)=0
    \qq\text{for every }\H^{N-1}\text{-negligible Borel set }Z\subseteq\Om
  \]
  and
  \begin{equation}\label{eq:finite-integral}
    \int_\Om v^+\,\d\mu<\infty
  \qq\text{for every non-negative }v\in\BV(\Om)\,.
  \end{equation}
\end{defi}

In particular, we record that \eqref{eq:finite-integral} implies finiteness of $\mu$ on the bounded domain $\Om$. Moreover, we know from \cite[Proposition 4.1]{FS} that the conditions of Definition \ref{defi:mu} are equivalent with requiring an (isotropic or anisotropic) IC with arbitrarily large constant for $\mu$.

\smallskip

As foreshadowed in the introduction, our main assumptions on the measures $\mu_\pm$ then take the form of signed anisotropic ICs for a pair of measures, as introduced in \cite[Definition 3.1]{FS}. We recast the definition here for a pair $(\mu_1,\mu_2)$, but directly stress that the subsequently relevant choices are in fact $(\mu_1,\mu_2)=(\mu_\mp,\mu_\pm)$.
	
\begin{defi}[anisotropic ICs]\label{defi:IC}
Consider an open set $U \subseteq \R^N$ and a Borel function $\p \colon U \times \R^N \to [0,\infty)$ which is positively 1-homogeneous in $\xi$. A pair $(\mu_1,\mu_2)$ of finite non-negative Radon measures on $U$ satisfies the $\p$-anisotropic isoperimetric condition \ka in brief, the $\p$-IC\/\kz{} in $U$ with constant $C\in{[0,\infty)}$ if we have
  \[
    \mu_1(A^+)-\mu_2(A^1)\leq C \P_\p(A)
    \qq\qq\text{for all measurable }A\Subset U\,.
  \]
  We say that the single measure $\mu_1$ satisfies the $\p$-IC if this condition holds for $\mu_2 \equiv0$. 
\end{defi}

We observe that by \cite[Theorem 4.2]{FS}, if $U=\Om$ is bounded and Lipschitz, we can equivalently express the ICs by testing with $\BV$ functions instead of the sets $A$ above.

\medskip

Now we are ready for stating our first result on lower semicontinuity of $\MF_{u_0}^\mu$ from \eqref{eq:MF_intro}, or equivalently from \eqref{eq:MF}.

\begin{thm}[semicontinuity] \label{thm:LSC}
  We consider a bounded open set\/ $\Om \subseteq \R^N$ with Lipschitz boundary and $u_0 \in \W^{1,1}(\R^N)$. Moreover, we impose Assumption \ref{assum:f} for $f$. If $\mu_\pm$ are admissible measures on $\Om$ such that $(\mu_-,\mu_+)$ satisfies the $\finf$-IC in $\Om$ with constant $1$ and $(\mu_+,\mu_-)$ satisfies the $\widetilde{\finf}$-IC on $\Om$ with constant $1$, then $\MF_{u_0}^\mu$ is lower semicontinuous on $\BV(\Om)$ with respect to the strong topology of\/ $\L^1(\Om)$. 
\end{thm}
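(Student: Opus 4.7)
\emph{Plan.} My plan is to follow the strategy previewed in the introduction: for each $w\in\BV(\Om)$, pass to the lift $\wdi\in\BV(\Omd)$ on $\Omd\coleq(0,1)\times\Om$ defined by $\wdi(x_0,x)\coleq x_0+w(x)$, and rewrite
\[
  \MF_{u_0}^\mu[w]=\widehat\Phi[\wdi]+\mathcal{R}[w]\,,
\]
where $\widehat\Phi$ is an anisotropic total-variation-with-measure functional on the enlarged domain and $\mathcal{R}[w]$ is a zero-order remainder, continuous with respect to $\L^1(\Om)$-convergence. The desired lower semicontinuity of $\MF_{u_0}^\mu$ would then be inherited from that of $\widehat\Phi$, which in turn would be supplied by \cite[Theorem 3.5]{FS} once an appropriate $1$-homogeneous integrand $\p\colon\ol\Omd\times\R^{N+1}\to[0,\infty)$ and lifted measures $\mathcal{L}^1\otimes\mu_\pm$ on $\Omd$ are shown to fit that theorem's framework.

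A natural candidate for $\p$ is the positively $1$-homogeneous extension of $\fpers$ to all of $\R^{N+1}$ given by
\[
  \p((x_0,x),(\eta,\xi))\coleq\begin{cases}\fpers(x,\eta,\xi)&\text{if }\eta\ge0\,,\\\finf(x,\xi)-M\eta&\text{if }\eta\le0\,,\end{cases}
\]
where consistency at $\eta=0$ is immediate and where convexity of $\p$ across the interface amounts exactly to the slope condition $\partial_\eta^+\fpers(x,0,\xi)\ge-M$, which is equivalent to \eqref{assum:H4}; the remaining hypotheses (continuity, convexity in the vector variable, linear growth, positive $1$-homogeneity) follow from Lemma \ref{lem:properties_rec_persp}. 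With the lifted Dirichlet datum ${u_0}_\lozenge(x_0,x)\coleq x_0+u_0(x)$ and with $\D\wdi=\mathcal{L}^1\otimes(\LN,\D w)$, Fubini combined with the $1$-homogeneity of $\p$ recovers the bulk term $\int_\Om f(\,.\,,\D w)$ and, on the lateral piece $[0,1]\times\partial\Om$, reproduces $\int_{\partial\Om}\finf(\,.\,,(w{-}u_0)\nu_\Om)\,\d\H^{N-1}$. The two flat faces $\{0,1\}\times\Om$ contribute, after checking all four sign cases of both branches of $\p$, the harmless zero-order term $\int_\Om(M+f(\,.\,,0))|w{-}u_0|\,\d\LN$. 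For the measure part, $\wdi^\pm(x_0,x)=x_0+w^\pm(x)$ yields $\llangle\mathcal{L}^1\otimes\mu_\pm\,;\wdi^\mp\rrangle=\llangle\mu_\pm\,;w^\mp\rrangle+\tfrac12(\mu_+(\Om){-}\mu_-(\Om))$. The surplus zero-order term and the constant, both visibly $\L^1$-continuous in $w$, together form $-\mathcal{R}[w]$.

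The main obstacle is to transfer the isoperimetric conditions to the product setting. For a test set $B\Subset\Omd$, Fubini rewrites the left-hand side of the desired $\p$-IC for the pair $(\mathcal{L}^1\otimes\mu_-,\mathcal{L}^1\otimes\mu_+)$ as $\int_0^1[\mu_-(({}_{x_0}B)^+)-\mu_+(({}_{x_0}B)^1)]\,\d x_0$, where admissibility of $\mu_\pm$ and the slicing results of Appendix \ref{asec:sections_ICs} are invoked to freely commute the density-$0$ and density-$1$ operations with sectioning up to $\H^{N-1}$-negligible sets ignored by $\mu_\pm$. For the right-hand side, decomposing $\nu_B=(\nu_B^0,\nu_B^x)$, one combines \eqref{assum:H4} in the form $\p(\,.\,,\nu_B)\ge\finf(\,.\,,\nu_B^x)-M\nu_B^0$, the vanishing $\int_{\partial^\ast\!B}\nu_B^0\,\d\H^N=0$ (the divergence theorem applied to the constant field $e_0$ on $B$), and Lemma \ref{lem:prod_rectifiable} together with the coarea formula to derive
\[
  \P_\p(B)\ge\int_{\partial^\ast\!B}\finf(\,.\,,\nu_B^x)\,\d\H^N=\int_0^1\P_{\finf}({}_{x_0}B)\,\d x_0\,.
\]
Applying the given $\finf$-IC on $\Om$ slice-wise then furnishes the lifted $\p$-IC on $\Omd$ with constant $1$, and the mirrored $\widetilde\p$-IC follows analogously from the $\widetilde\finf$-IC. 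With the transferred ICs in hand, \cite[Theorem 3.5]{FS} yields lower semicontinuity of $\widehat\Phi$ along $(\wdi)_k\to\wdi$ in $\L^1(\Omd)$ (guaranteed whenever $w_k\to w$ in $\L^1(\Om)$), and combining this with the continuity of $\mathcal{R}$ gives the claim.
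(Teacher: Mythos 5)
Your overall strategy (lift to $\Omd$ via $\wdi$, rewrite $\MF_{u_0}^\mu$ as an anisotropic TV functional plus a continuous remainder, transfer the ICs, and invoke \cite[Theorem 3.5]{FS}) is indeed the one the paper takes. But your choice of the $1$-homogeneous integrand $\p$ differs from the paper's and contains a genuine gap: it does not satisfy the two-sided linear-growth requirement of \cite[Assumption 2.11]{FS}. With your one-sided extension $\p(\,.\,,\eta,\xi)=\finf(x,\xi)-M\eta$ for $\eta\le 0$, one has $\p(\,.\,,\eta,0)=-M\eta$ for $\eta<0$, which vanishes when $M=0$ and is negative when $M<0$; on the other branch, $\p(\,.\,,\eta,0)=\eta f(x,0)$ can vanish whenever $f(x,0)=0$. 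So the lower bound $\p\ge\alpha|(\eta,\xi)|$ fails in general, and it cannot be restored by adding a constant to $f$: that normalization drives the $(\mathrm{H4})$-constant $M$ down while leaving the $\eta\le 0$ branch of $\p$ independent of $f$. The paper avoids exactly this by defining $\p((x_0,x),(\xi_0,\xi))\coleq\fpers(x,|\xi_0|,\xi)$ (with the normalization $(\mathrm{H1'})$: $f\ge\alpha\sqrt{1+|\xi|^2}$, and $(\mathrm{H4'})$: $f\ge\finf$), which is even in $(\xi_0,\xi)$, satisfies $\p\ge\alpha|(\xi_0,\xi)|$, and — by Lemma \ref{lem:p} — meets all of \cite[Assumption 2.11]{FS}. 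With your $\p$ one would either need the extra hypotheses $\inf_\Om f(\,.\,,0)>0$ and $M>0$, or one would need to add a term such as $c|\eta|$ and re-check that the rewriting of the functional and the IC transfer survive; neither is addressed in your plan.

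A secondary and more minor point: your IC transfer argues at the level of sets, asserting that the density-$0$/$1$ operators commute with horizontal sectioning for $\H^{N-1}$-a.e.\@ $x$ and a.e.\@ $x_0$, and that $\int_{\partial^\ast B}\finf(\,.\,,\nu_B^x)\,\d\H^N=\int_0^1\P_\finf({}_{x_0}B)\,\d x_0$. Neither follows from Lemma \ref{lem:prod_rectifiable} alone (which treats products $A\times S$, whereas $\partial^\ast B$ is not generically such a product); both require genuine codimension-one slicing of $\BV$ functions, which is exactly what Appendix \ref{asec:sections_ICs} (Lemma \ref{lem:var_anisotropies}) is for, and even there only the inequality $\int_0^1\P_\finf({}_{x_0}B)\,\d x_0\le\P_\p(B)$ is established. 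The paper deliberately avoids this machinery in the main text by using the test-function reformulation of the ICs from \cite[Theorem 4.2, 4.6]{FS} (valid when $\mu_\pm$ are mutually singular, which covers the intended signed-measure case) and defers the general slicing argument to the appendix; your plan should acknowledge this division of labour or at least cite the appendix's slicing lemma rather than Lemma \ref{lem:prod_rectifiable}.
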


The proof of Theorem \ref{thm:LSC} works by reduction to the $\TV$ cases of \cite{FS} and is explicated in Section \ref{subsec:LSC}.

\begin{rem}
  If one weakens the requirement in \eqref{assum:H1} to merely $0\le f(x,\xi)\le\beta(|\xi|+1)$ and otherwise keeps the assumptions of Theorem \ref{thm:LSC}, then $\MF_{u_0}^\mu$ is still lower semicontinuous along $\L^1(\Om)$-convergent sequences $(u_k)_k$ in $\BV(\Om)$ such that additionally $\sup_{k\in\N}|\D u_k|(\Om)<\infty$. This is in fact a routine corollary, which is deduced by applying the theorem for the integrands $f_\eps(x,\xi)\coleq f(x,\xi)+\eps|\xi|$ with arbitrary $\eps>0$.
\end{rem}

As indicated in the introduction, existence of minimizers is a straightforward consequence of Theorem \ref{thm:LSC} on one hand and of a comparably straightforward coercivity property on the other hand. We emphasize, however, that coercivity indeed requires the ICs in the slightly stronger version with constant \emph{strictly} smaller than $1$ (compare the later Proposition \ref{prop:inhom_CS_coercivity}). Therefore, our existence theorem reads as follows.

\begin{thm}[existence of minima] \label{thm:exist_inhom}
  We consider a bounded open set\/ $\Om \subseteq \R^N$ with Lipschitz boundary and $u_0 \in \W^{1,1}(\R^N)$. Moreover, we impose Assumption \ref{assum:f} for $f$. If $\mu_\pm$ are admissible measures on $\Om$ such that $(\mu_-,\mu_+)$ satisfies the $\finf$-IC in $\Om$ with constant $C \in [0,1)$ and $(\mu_+,\mu_-)$ satisfies the $\widetilde{\finf}$-IC on $\Om$ with constant $C \in [0,1)$, then there exists a minimizer of\/ $\MF_{u_0}^\mu$ in $\BV(\Om)$.
\end{thm}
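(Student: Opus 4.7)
The plan is to establish existence via the standard direct method of the calculus of variations, combining the already-announced coercivity estimate (Proposition \ref{prop:inhom_CS_coercivity}, as referenced in the discussion after Theorem \ref{thm:LSC}) with the semicontinuity result of Theorem \ref{thm:LSC}. Concretely, I would first verify that $\MF_{u_0}^\mu$ is not identically $+\infty$ on $\BV(\Om)$ by testing with the competitor $w\equiv u_0|_\Om\in\W^{1,1}(\Om)$ (noting that the measure term is finite by admissibility of $\mu_\pm$ via \eqref{eq:finite-integral}), so that $\inf_{\BV(\Om)}\MF_{u_0}^\mu\in\R$.

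Next, I would select a minimizing sequence $(u_k)_k\subseteq\BV(\Om)$ and invoke the coercivity consequence of the two strict ICs, i.e.\ that there exist constants $c>0$ and $K\in\R$ such that
\[
  \MF_{u_0}^\mu[w]\ge c\,\|w\|_{\BV(\Om)}-K
  \qq\text{for every }w\in\BV(\Om)\,.
\]
The rough strategy underlying such an estimate is to bound the integrand term $\int_\Om f(\,.\,,\D w)$ from below by $\alpha|\D w|(\Om)$ (using \eqref{assum:H1}) together with the boundary penalization from below by $\alpha\int_{\partial\Om}|w{-}u_0|\,\d\H^{N-1}$, and on the other side to control the signed measure term via the $\finf$- and $\widetilde{\finf}$-ICs with constant $C<1$ applied to the super- and sublevel sets of $w$ (through a coarea argument as in \cite{FS}), so that the measure contribution is absorbed into a fraction $C$ of the anisotropic total variation, leaving a positive residual $(1{-}C)$-multiple of it. The resulting BV-bound, together with a Poincar\'e-trace-type inequality, yields uniform control of $\|u_k\|_{\BV(\Om)}$.

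With the uniform BV-bound in hand, the standard $\BV$ compactness theorem (relying here on the Lipschitz regularity of $\partial\Om$) produces a subsequence, not relabeled, such that $u_k\to u$ in $\L^1(\Om)$ for some $u\in\BV(\Om)$. Then Theorem \ref{thm:LSC}, which is applicable since the ICs with constant $C<1$ a fortiori imply the ICs with constant $1$, gives
\[
  \MF_{u_0}^\mu[u]\le\liminf_{k\to\infty}\MF_{u_0}^\mu[u_k]=\inf_{\BV(\Om)}\MF_{u_0}^\mu\,,
\]
so that $u$ is the desired minimizer. The main obstacle in executing this plan is the coercivity step: turning the measure term, which is not \emph{a priori} sign-definite and involves the subtle choice of upper/lower representatives, into a controllable perturbation of the anisotropic total variation. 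This is exactly where the strict inequality $C<1$ is indispensable, as illustrated by the later Example \ref{ex:anis_area}, and where the flexibility of the IC-formulation over sets (Definition \ref{defi:IC}), respectively its $\BV$-function reformulation \cite[Theorem 4.2]{FS}, becomes decisive.
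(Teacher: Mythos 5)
Your proposal follows the paper's own route: the direct method, using the coercivity of Proposition \ref{prop:inhom_CS_coercivity} (from the strict ICs with $C<1$) to bound a minimizing sequence in $\BV(\Om)$, then $\BV$-compactness for an $\L^1(\Om)$-convergent subsequence, and finally the lower semicontinuity of Theorem \ref{thm:LSC}. One inaccuracy in your parenthetical sketch of the coercivity mechanism is worth flagging: starting from the \emph{isotropic} lower bound $\int_\Om f(\,.\,,\D w)\ge\alpha|\D w|(\Om)$ via \eqref{assum:H1} cannot absorb the measure term, which is controlled only by $C|\D w|_{\finf}(\Om)$ and can exceed $\alpha|\D w|(\Om)$ whenever $\beta/\alpha$ is large. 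The paper instead first exploits \eqref{assum:H4} in its normalized form \eqref{assum:H4'} to obtain the \emph{anisotropic} lower bound $\int_\Om f(\,.\,,\D w)\ge|\D w|_{\finf}(\Om)$, then absorbs the measure contribution to leave a $(1{-}C)$-multiple of the $\finf$-anisotropic total variation plus boundary term, and only afterwards passes to the isotropic bound via \eqref{eq:lin_growth_finf} and Poincar\'e. This is precisely where \eqref{assum:H4} enters the coercivity argument, as discussed in Remark \ref{rem:H4_for_coercivity}. Since you cite the proposition rather than reprove it, your overall argument remains complete.
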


Full details on coercivity and the proof of Theorem \ref{thm:exist_inhom} are
given in Section \ref{subsec:coerc}.

\medskip

It seems worth pointing out that the analogous existence theorem of \cite{FS} for anisotropic $\TV$ cases with measures could be pushed to the limit case $C=1$ of the ICs at least in case of a bounded boundary datum $u_0$. The following example in dimension $N=2$ shows that there is no hope for an analogous extension of Theorem \ref{thm:exist_inhom} and thus that the present case of the general functional $\MF_{u_0}^\mu$ differs from the situations of \cite{FS}.

\begin{examp}[non-existence of minima in the borderline case] \label{ex:anis_area}
  In dimension $N=2$, we consider an arbitrary $x$-independent integrand $\p: \R^2 \to [0,\infty)$ which is positively 1-homogeneous and convex and satisfies $\p(\xi)>0$ for all $\xi\in\R^2\setminus\{0\}$. Moreover, on the ${\widetilde\p}^\circ$-unit ball\/ $\Om \coleq \left\{ {\widetilde\p}^\circ < 1 \right\}\Subset\R^2$ \ka where ${\widetilde\p}^\circ$ is the polar of\/ $\widetilde\p$ in the sense of Section \ref{subsec:polar_aniso}\kz{} we consider the anisotropic area functional
  \[
    \A_\p[w]  
    \coleq  \int_{\Om}  \sqrt{1+\p^2(\D w)}
    + \int_{\partial\Om} \p \left(w \nu_\Om  \right)\,\d\H^{1} 
    -  \int_{ \Om} \frac{w}{{\widetilde\p}^{\circ}}\dx
    \qq \text{for } w \in \BV(\Om)
  \]
  with integrand $f(\xi)=\sqrt{1+\p(\xi)^2}$, $\finf(\xi)=\p(\xi)$, measures $\mu_+\equiv0$ and $\mu_-=H\mathcal{L}^2$, where $H(x)\coleq 1/{\widetilde\p}^\circ(x)$ for $0\neq x\in\R^2$, and with zero boundary datum $u_0\equiv0$. Then, we will prove in Section \ref{sec:counterexample} that $\mu_-$ satisfies the $\p$-IC in $\R^2$ with precisely constant\/ $1$, but also that $\A_\p$ has no minimum in $\BV(\Om)$.
\end{examp}

We emphasize that Example \ref{ex:anis_area} covers in particular the standard area integrand $f(\xi)=\sqrt{1+|\xi|^2}$, and hence in case of the borderline IC it recovers non-existence phenomena in case of the $2$d prescribed-mean-curvature problem with datum $H\in\L^p(\Om)$ for all $p\in{[1,2)}$, but $H\notin\L^2(\Om)$.

\medskip

Our final result establishes a natural connection between our $\BV$-functional $\MF_{u_0}^\mu$ and its more straightforward $\W^{1,1}$-version, given by
\[
  \mathrm{F}^\mu[w]
  \coleq \int_\Om f( \,.\,, \nabla w ) \dx
  + \int_\Om w^\ast \,\d(\mu_+{-}\mu_-)\,.
\]
Evidently, $\MF_{u_0}^\mu$ coincides with $\mathrm{F}^\mu$ on the Dirichlet class $\W^{1,1}_{u_0}(\Om)\coleq u_0+\W^{1,1}_0(\Om)$. What is more, we have the following result.

\begin{thm}[existence of recovery sequences]\label{thm:rec_seq} 
  We consider a bounded open set\/ $\Om\subseteq\R^N$ with Lipschitz boundary, fix $u_0\in\W^{1,1}(\R^N)$, and recall the notation $\ol{u}\coleq\1_\Om u+\1_{\R^N\setminus\ol{\Om}}u_0$. Moreover, we impose Assumption \ref{assum:f} for $f$ with the lower bound in \eqref{assum:H1} weakened to mere non-negativity and with \eqref{assum:H4} entirely dropped. If $\mu_\pm$ are admissible measures on $\Om$ such that $\mu_+$ and $\mu_-$ are singular to each other, then, for every $u\in\BV(\Om)$, there exists a recovery sequence $(u_k)_k$ in $\W^{1,1}_{u_0}(\Om)$ such that\/ $(\ol{u_k})_k$ converges to $\ol{u}$ area-strictly in $\BV(\pOm)$, on any open $\pOm\subseteq\R^N$ such that $\Om\Subset\pOm$ and\/ $|\pOm|<\infty$, with
  \[
    \lim_{k\to\infty}\mathrm{F}^\mu[u_k]
    =\MF_{u_0}^\mu[u]\,.
  \]
  More specifically, for the single terms, we achieve
  \begin{gather*}
    \lim_{k\to\infty}\int_\Om f(\,.\,,\nabla u_k)\dx
    =\int_\Om f(\,.\,,\D u)+\int_{\partial\Om}\finf(\,.\,,(u{-}u_0)\nu_\Om)\,\d\H^{N-1}\,,\\
    \lim_{k\to\infty}\int_\Om u_k^\ast\,\d\mu_- 
    = \int_\Om u^+\,\d\mu_-
    \qq\qq\text{and}\qq\qq
    \lim_{k\to\infty}\int_\Om u_k^\ast\,\d\mu_+
    = \int_\Om u^-\,\d\mu_+\,.
  \end{gather*}
\end{thm}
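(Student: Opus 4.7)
The strategy is to construct, for each $k$, an approximation $u_k\in\W^{1,1}_{u_0}(\Om)$ by a carefully tuned mollification of the canonical extension $\ol u=u\1_\Om+u_0\1_{\R^N\setminus\ol\Om}$, engineered to achieve both (i) area-strict convergence of $\ol{u_k}$ to $\ol u$ in $\BV(\pOm)$ for every admissible $\pOm\Supset\Om$ with $|\pOm|<\infty$, and (ii) one-sided pointwise limits $u_k\to u^-$ at $\mu_+$-\ae{} $x\in\Om$ and $u_k\to u^+$ at $\mu_-$-\ae{} $x\in\Om$. Once (i) is secured, Theorem~\ref{thm:Resh_cont_inhom_BV} applied to a continuous extension of $f$ on $\ol{\pOm}\times\R^N$ (whose proof only uses the linear-growth upper bound, so the weakening of \eqref{assum:H1} to non-negativity is harmless) gives $\int_{\pOm}f(\,.\,,\D\ol{u_k})\to\int_{\pOm}f(\,.\,,\D\ol u)$, and subtracting the common constant $\int_{\pOm\setminus\ol\Om}f(\,.\,,\nabla u_0)\,\d\LN$ via \eqref{eq:F-enlarge-domain} delivers precisely the first claimed limit. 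The Dirichlet-trace condition $u_k\in\W^{1,1}_{u_0}(\Om)$ is imposed by cutting $u_k$ against $u_0$ in a shrinking boundary layer $\{x\in\Om:\mathrm{dist}(x,\partial\Om)<\delta_k\}$; a standard Anzellotti-type computation in the layer shows that its contribution recovers the boundary-jump piece $(u{-}u_0)\nu_\Om\H^{N-1}$ of $\D\ol u$ in the area-strict sense.

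For the two measure terms, the decisive hypothesis is the mutual singularity $\mu_+\perp\mu_-$. Given $\eps>0$, by inner regularity I select disjoint compact sets $K_\pm\subseteq\Om$ with $\mu_\pm(\Om\setminus K_\pm)<\eps$, together with disjoint open neighborhoods $V_\pm\Subset\Om$ of $K_\pm$. The interior part of the mollification is then designed via a partition of unity subordinate to $\{V_+,V_-,\Om\setminus(K_+\cup K_-)\}$ so that in $V_+$ (resp.~$V_-$) the approximation systematically reads off the lower (resp.~upper) side of each jump of $u$; concretely, one can pass through the coarea decomposition of $u$ and replace each $\1_{\{u>t\}}$ by a smooth function that vanishes on a $(1/k)$-neighborhood of $V_+\cap(\{u>t\}^+\setminus\{u>t\}^1)$ and equals $1$ on a $(1/k)$-neighborhood of $V_-\cap\{u>t\}^+$. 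This forces $u_k\to u^-$ pointwise on $V_+\cap\mathrm{J}_u$ and $u_k\to u^+$ pointwise on $V_-\cap\mathrm{J}_u$, while the usual convergence $u_k\to u=u^\pm$ holds $\H^{N-1}$-\ae{} off $\mathrm{J}_u$. A standard truncation argument (first reducing to bounded $u$, applying dominated convergence together with the uniform bound $\sup_k\int v_k^\ast\,\d|\mu|<\infty$ supplied by admissibility \eqref{eq:finite-integral} and the $\BV$-bound of $(u_k)_k$, and then monotonely passing to the limit in the truncation level) converts these pointwise limits into $\int_{V_+}u_k^\ast\,\d\mu_+\to\int_{V_+}u^-\,\d\mu_+$ and $\int_{V_-}u_k^\ast\,\d\mu_-\to\int_{V_-}u^+\,\d\mu_-$; letting $\eps\to0^+$ removes the auxiliary restriction to $V_\pm$.

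The principal difficulty is combining, within a single smooth family $(u_k)_k$, the Dirichlet-trace adjustment in the boundary layer, the area-strict convergence of the extension $\ol{u_k}$ on $\pOm$, and the two opposite one-sided limits at $\mu_\pm$-\ae{} jump point of $u$. The mutual singularity $\mu_+\perp\mu_-$ is precisely what makes a partition-of-unity localization on disjoint subregions of $\Om$ possible, so that these three requirements can be reconciled without conflict; admissibility of $\mu_\pm$ ensures all integrals are finite and justifies the passage to the limit. Once the construction is in place the verification of the three individual limits is routine: the $f$-limit is handled by Reshetnyak continuity, while the two measure limits follow from directional pointwise convergence combined with truncation and dominated convergence.
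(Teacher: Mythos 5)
Your proposal follows a genuinely different route from the paper: rather than constructing the interior recovery sequence from scratch, the paper reduces the whole matter to the earlier result \cite[Proposition 4.4]{FS} (providing a $\W^{1,1}$ recovery sequence with free boundary values and the right one-sided measure limits), and then adjusts boundary values in two further stages --- first for $u_0\in\L^\infty$ via truncations (Lemma \ref{lem:recovery-W-Wu0}), then for general $u_0\in\W^{1,1}$ by approximating $u_0$ with bounded data and perturbing (Lemma \ref{lem:approx-u0_inhom}). Your high-level ideas are correct and aligned with what underlies the cited result: mutual singularity of $\mu_\pm$ is used to localize on disjoint regions and force opposite one-sided limits at jump points; Reshetnyak continuity on an enlarged domain \eqref{eq:F-enlarge-domain} handles the $f$-term; and truncation plus dominated convergence, supported by admissibility, handles the measure integrals. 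The partition-of-unity gluing you sketch does preserve area-strict convergence (pointwise convexity of $\xi\mapsto\sqrt{1+|\xi|^2}$ plus the $\L^1$-vanishing of the cross term $(\nabla\eta)(a_k{-}b_k)$ make this work), so that part of the sketch is recoverable.

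There are, however, two genuine gaps. First, the decisive step --- producing $\W^{1,1}$ approximants that converge area-strictly \emph{and} whose precise representatives converge pointwise $\H^{N-1}$-\ae{} to $u^-$ on the $\mu_+$-relevant set and to $u^+$ on the $\mu_-$-relevant set --- is left at the level of ``replace each $\1_{\{u>t\}}$ by a smooth function that vanishes on a $(1/k)$-neighborhood of \dots''. This is too vague to verify: the measure terms require convergence of $u_k^\ast$ in the $\H^{N-1}$-\ae{} (i.\@e.\@ capacitary) sense, not merely $\LN$-\ae{}, and this does not follow from generic mollification. Achieving simultaneously area-strict convergence and the correct $\H^{N-1}$-\ae{} one-sided behavior at jump points is exactly the nontrivial content of \cite[Proposition 4.4]{FS} (building on results of the type \cite[Theorem 3.3]{CarDalLeaPas88} and \cite[Theorem 3.2]{Lahti17}); your sketch gives no substitute. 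Second, you silently assume $u_0$ can be absorbed by cutting in a shrinking boundary layer, but this fails for unbounded $u_0\in\W^{1,1}(\R^N)$: one cannot truncate $u_k$ against $u_0$ and keep control of the measure terms, which is why the paper first proves the theorem for $u_0\in\L^\infty$ via level-set truncations at heights $M\ge\|u_0\|_{\L^\infty}$ and only then treats general $u_0$ by approximation in $\W^{1,1}$ and a separate perturbation lemma. That case is entirely missing from your argument.
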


The proof of Theorem \ref{thm:rec_seq} is implemented in Section \ref{sec:rec_seq} and decisively exploits that $\mu_+$ and $\mu_-$ are singular to each other, i.\@e.\@ are indeed the positive and negative part of the signed measure $\mu_+{-}\mu_-$. If this assumption were not at hand, $\mu_+$ and $\mu_-$ could partially cancel out in computing $\mu_+{-}\mu_-$, and $\MF_{u_0}^\mu$ might depend on the separate measures $\mu_+$ and $\mu_-$, while $\mathrm{F}^\mu$ generally and evidently depends on $\mu_+{-}\mu_-$ only. Therefore, the mutual singularity assumption for $\mu_\pm$ cannot be dropped from Theorem \ref{thm:rec_seq} or the subsequent Corollaries \ref{cor:consistency} and \ref{cor:rel_func}, and actually \cite[Example 3.13]{FS} shows that the conclusions may fail without this assumption.

\medskip

As a straightforward consequence of Theorem \ref{thm:rec_seq} we now record a coincidence of infimum values.

\begin{cor}[consistency] \label{cor:consistency}
  Under the assumptions of Theorem \ref{thm:rec_seq}, we have
  \[
    \inf_{\BV(\Om)} \MF_{u_0}^\mu
    = \inf_{\W^{1,1}_{u_0}(\Om)} \mathrm{F}^\mu\,.
  \]
\end{cor}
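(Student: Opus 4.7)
The plan is to prove the two inequalities separately, with the easy direction following from an evaluation of $\MF_{u_0}^\mu$ on the Dirichlet class $\W^{1,1}_{u_0}(\Om)$, and the reverse direction following directly from the recovery sequence theorem.

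For the inequality $\inf_{\BV(\Om)}\MF_{u_0}^\mu\le\inf_{\W^{1,1}_{u_0}(\Om)}\mathrm{F}^\mu$, I would first observe that $\W^{1,1}_{u_0}(\Om)\subseteq\BV(\Om)$ and that $\MF_{u_0}^\mu$ and $\mathrm{F}^\mu$ agree on this subspace. Indeed, for $u\in\W^{1,1}_{u_0}(\Om)$ the trace of $u{-}u_0$ on $\partial\Om$ vanishes, so the boundary penalization term disappears; moreover $\D u=(\nabla u)\LN$ implies $\int_\Om f(\,.\,,\D u)=\int_\Om f(\,.\,,\nabla u)\dx$; and finally, since $u^+=u^-=u^\ast$ holds $\H^{N-1}$-a.e., hence $|\mu_\pm|$-a.e.\ by admissibility, the measure term evaluates as
\[
  \llangle\mu_\pm\,;u^\mp\rrangle
  =\int_\Om u^\ast\,\d\mu_+-\int_\Om u^\ast\,\d\mu_-
  =\int_\Om u^\ast\,\d(\mu_+{-}\mu_-)\,.
\]
Thus $\MF_{u_0}^\mu[u]=\mathrm{F}^\mu[u]$ on $\W^{1,1}_{u_0}(\Om)$, and taking the infimum over this smaller class yields the desired inequality.

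For the reverse inequality, I would invoke Theorem \ref{thm:rec_seq}: given any $u\in\BV(\Om)$, there exists a sequence $(u_k)_k$ in $\W^{1,1}_{u_0}(\Om)$ such that $\lim_{k\to\infty}\mathrm{F}^\mu[u_k]=\MF_{u_0}^\mu[u]$. This immediately gives
\[
  \inf_{\W^{1,1}_{u_0}(\Om)}\mathrm{F}^\mu
  \le\lim_{k\to\infty}\mathrm{F}^\mu[u_k]
  =\MF_{u_0}^\mu[u]\,,
\]
and taking the infimum over $u\in\BV(\Om)$ on the right closes the argument.

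No serious obstacle is expected, since all the work has already been done in Theorem \ref{thm:rec_seq}; the corollary is a formal consequence. The only subtle point worth verifying carefully is the identification of $\MF_{u_0}^\mu$ with $\mathrm{F}^\mu$ on $\W^{1,1}_{u_0}(\Om)$, which relies on the fact that admissible $\mu_\pm$ do not charge $\H^{N-1}$-negligible sets so that Sobolev functions have a well-defined $|\mu_\pm|$-a.e.\ representative coinciding with both $u^+$ and $u^-$.
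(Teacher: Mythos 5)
Your proof is correct and takes essentially the same approach the paper intends: the inequality $\le$ follows from the coincidence $\MF_{u_0}^\mu=\mathrm{F}^\mu$ on $\W^{1,1}_{u_0}(\Om)$ (which the paper records as "evident" just before Theorem \ref{thm:rec_seq}), and the reverse inequality follows directly from the recovery sequences of Theorem \ref{thm:rec_seq}. Your careful verification that the coincidence on $\W^{1,1}_{u_0}(\Om)$ uses the vanishing trace, $\D u=(\nabla u)\LN$, and the $\H^{N-1}$-a.e.\ identity $u^+=u^-=u^\ast$ together with admissibility of $\mu_\pm$ is exactly the content the paper leaves implicit.
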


The combination of our results also identifies the \emph{relaxation} on all of $\BV(\Om)$ of $\mathrm{F}^\mu$ restricted to $\W^{1,1}_{u_0}(\Om)$, that is, the functional $(\mathrm{F}^\mu)^{\mathrm{rel}}_{u_0}$ abstractly defined by
\[
  (\mathrm{F}^\mu)^{\mathrm{rel}}_{u_0}[w]
  \coleq \inf \left\{ \liminf_{k \to \infty} \mathrm{F}^\mu[w_k] \,:\,\W^{1,1}_{u_0}(\Om) \ni \ w_k \to w \text{ in } \L^1(\Om) \right\}
  \qq \text{for }w\in\BV(\Om)\,.
\]
In fact, Theorem \ref{thm:LSC} guarantees $(\mathrm{F}^\mu)^{\mathrm{rel}}_{u_0}\ge\MF_{u_0}^\mu$ on $\BV(\Om)$, while Theorem \ref{thm:rec_seq} yields $(\mathrm{F}^\mu)^{\mathrm{rel}}_{u_0}\le\MF_{u_0}^\mu$ on $\BV(\Om)$. Therefore, we may state the following corollary.

\begin{cor}[relaxation] \label{cor:rel_func}
  We impose the general hypotheses of Theorems \ref{thm:LSC} and \ref{thm:rec_seq} \ka in particular the full Assumption \ref{assum:f} and admissibility of mutually singular measures $\mu_\pm$\kz. If $(\mu_-,\mu_+)$ satisfies the $\finf$-IC in $\Om$ with constant $1$ and $(\mu_+,\mu_-)$ satisfies the $\widetilde{\finf}$-IC on $\Om$ with constant $1$, then we have
  \[
    (\mathrm{F}^\mu)^{\mathrm{rel}}_{u_0} = \MF_{u_0}^\mu
    \qq \text{on } \BV(\Om)\,.
  \]
\end{cor}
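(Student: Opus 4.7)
The plan is to prove the claimed identity $(\mathrm{F}^\mu)^{\mathrm{rel}}_{u_0} = \MF_{u_0}^\mu$ on $\BV(\Om)$ by establishing the two separate inequalities, each of which will be immediate from one of the previously stated main results, once the consistency $\MF_{u_0}^\mu = \mathrm{F}^\mu$ on $\W^{1,1}_{u_0}(\Om)$ has been put on record.

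As a preliminary step I would verify that for every $v\in\W^{1,1}_{u_0}(\Om)$ it holds $\MF_{u_0}^\mu[v]=\mathrm{F}^\mu[v]$. Indeed, for such $v$ the Lebesgue decomposition $\D v=(\nabla v)\LN$ has no singular part, so $\int_\Om f(\,.\,,\D v)=\int_\Om f(\,.\,,\nabla v)\dx$ by the reduction recorded after \eqref{eq:decomposition_meas_func}. The boundary term vanishes because the trace of $v-u_0$ on $\partial\Om$ is zero, so $\finf(\,.\,,(v{-}u_0)\nu_\Om)=0$ by positive $1$-homogeneity of $\finf$. Finally, for $v\in\W^{1,1}(\Om)$ one has $v^+=v^-=v^\ast$ $\H^{N-1}$-\ae{} in $\Om$, and admissibility of $\mu_\pm$ (their vanishing on $\H^{N-1}$-negligible sets) yields $\llangle\mu_\pm;v^\mp\rrangle=\int_\Om v^\ast\,\d\mu_+-\int_\Om v^\ast\,\d\mu_-=\int_\Om v^\ast\,\d(\mu_+{-}\mu_-)$, matching the zero-order contribution in $\mathrm{F}^\mu$.

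With consistency at hand, the inequality $(\mathrm{F}^\mu)^{\mathrm{rel}}_{u_0}\ge\MF_{u_0}^\mu$ follows at once from Theorem \ref{thm:LSC}: for any $w\in\BV(\Om)$ and any sequence $(w_k)_k$ in $\W^{1,1}_{u_0}(\Om)$ with $w_k\to w$ in $\L^1(\Om)$, the $\L^1(\Om)$-lower semicontinuity of $\MF_{u_0}^\mu$ granted by that theorem (whose ICs hypothesis with constant $1$ is exactly the one assumed here) and the preceding identity on $\W^{1,1}_{u_0}$ give
\[
  \MF_{u_0}^\mu[w]\le\liminf_{k\to\infty}\MF_{u_0}^\mu[w_k]=\liminf_{k\to\infty}\mathrm{F}^\mu[w_k]\,,
\]
and passing to the infimum over all admissible approximating sequences yields the claim.

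Conversely, the inequality $(\mathrm{F}^\mu)^{\mathrm{rel}}_{u_0}\le\MF_{u_0}^\mu$ follows immediately from Theorem \ref{thm:rec_seq}. Indeed, the hypotheses of that theorem are included in the standing assumptions of the corollary (mutual singularity of $\mu_\pm$ being the only genuinely new ingredient beyond Theorem \ref{thm:LSC}), so for any given $u\in\BV(\Om)$ we obtain a sequence $(u_k)_k\subseteq\W^{1,1}_{u_0}(\Om)$ converging to $u$ in $\L^1(\Om)$ (area-strict convergence of the extensions in particular forcing $\L^1(\Om)$-convergence of $u_k$) with $\lim_{k\to\infty}\mathrm{F}^\mu[u_k]=\MF_{u_0}^\mu[u]$, which by the very definition of the relaxation gives $(\mathrm{F}^\mu)^{\mathrm{rel}}_{u_0}[u]\le\MF_{u_0}^\mu[u]$. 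There is no real obstacle here beyond invoking the two main theorems in sequence; the only minor care required is the consistency check on $\W^{1,1}_{u_0}(\Om)$ addressed in the first paragraph, which depends on admissibility of $\mu_\pm$ and on the vanishing of $\finf$ at the origin.
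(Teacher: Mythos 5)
Your proof is correct and follows essentially the same route as the paper, which combines Theorem~\ref{thm:LSC} for the inequality $(\mathrm{F}^\mu)^{\mathrm{rel}}_{u_0}\ge\MF_{u_0}^\mu$ with Theorem~\ref{thm:rec_seq} for the reverse one, after noting the coincidence of $\MF_{u_0}^\mu$ and $\mathrm{F}^\mu$ on $\W^{1,1}_{u_0}(\Om)$. The only difference is that you spell out the consistency check on $\W^{1,1}_{u_0}(\Om)$ in detail, which the paper merely records as evident just before Theorem~\ref{thm:rec_seq}.
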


We stress that it is quite usual to consider the lower semicontinuous envelope $(\mathrm{F}^\mu)^{\mathrm{rel}}_{u_0}$ as the natural extension by semicontinuity from $\W^{1,1}_{u_0}(\Om)$ to all of $\BV(\Om)$. Therefore, Corollary \ref{cor:rel_func} steadily underpins that our functional $\MF_{u_0}^\mu$ is a meaningful and natural choice.

\section{Lower semicontinuity and existence theory\texorpdfstring{ for \boldmath$\MF_{u_0}^\mu$}{}}\label{sec:LSC_exist}

In this section, we establish Theorems \ref{thm:LSC} and \ref{thm:exist_inhom}. To this end, we recall the general assumption that $\Om$ is a bounded open set with Lipschitz boundary in $\R^N$, and we record that we can preserve all properties of Assumption \ref{assum:f} when extending the integrand $f\colon\ol{\Om}\times\R^N\to[0,\infty)$ --- first to $f_{x_\ast} \colon(\ol{\Om}\cup U_{x_\ast})\times\R^N\to[0,\infty)$, for a suitably small neighborhood $U_{x_\ast}$ of any boundary point $x_\ast\in\partial\Om$, and then, by pasting together local extensions $f_{x_\ast}$ via a partition of unity, even to $f\colon\R^N\times\R^N\to[0,\infty)$. Therefore, for simplicity, we can and will assume in the sequel that Assumption \ref{assum:f} applies with $\Om=\R^N$ to $f$ defined on $\R^N\times\R^N$. Furthermore, since the conclusions of the theorems are not affected by adding any finite constant to the integrand $f$, we replace assumptions \eqref{assum:H1} and \eqref{assum:H4} with the following slight variants which are technically convenient for our approach:
\begin{enumerate}[\hspace{2.5ex}\rm(1)]
\renewcommand\theenumi{H\arabic{enumi}$^\prime$}
\item There exist $\beta\ge\alpha>0$ such that $\alpha\sqrt{1+|\xi|^2} \leq f(x,\xi) \leq \beta\sqrt{1+|\xi|^2}$ for all $(x,\xi) \in \R^N \times \R^N$.\label{assum:H1'}
\refstepcounter{enumi}\refstepcounter{enumi}
\item There holds $f(x,\xi) \geq \finf(x,\xi)$ for all $x,\xi \in \R^N$.\label{assum:H4'}
\end{enumerate}
Here, the passage from \eqref{assum:H1} to \eqref{assum:H1'} may require enlarging the constant $\beta$, for instance, replacing $\beta$ with $\alpha\!+\!\sqrt2\,\beta$, but this does not harm any subsequent argument.

\subsection[{The functional\texorpdfstring{ $\MF_{u_0}^\mu$}{} with extra variable}]{The functional\texorpdfstring{ \boldmath$\MF_{u_0}^\mu$}{} with extra variable}

As explained earlier, our semicontinuity proof is based on rewriting the functional $\MF_{u_0}^\mu$ with the help of an extra variable $x_0$. In fact, the 1-homogeneous integrand of the rewritten functional is roughly the perspective function $\fpers$ (see Section \ref{subsec:integrands}) and in technically precise language is the following function $\p$, which is properly defined on $\R^{N+1}\times\R^{N+1}=(\R \times \R^N)\times(\R \times \R^N)$:

\begin{defi}\label{defi:p}
  Given $f$ as in Assumption \ref{assum:f} with $\Om=\R^N$, we introduce $\p:(\R \times \R^N)\times(\R \times \R^N)\to [0,\infty)$ by setting
  \[
    \p((x_0,x),(\xi_0,\xi))=\p(x_0,x,\xi_0,\xi)\coleq
    \fpers(x,|\xi_0|,\xi) =
    \begin{cases}
      |\xi_0| \, f\Big( x,\frac{\xi}{|\xi_0|}\Big) & \text{if } \xi_0 \neq 0\\
      \finf(x,\xi) & \text{if } \xi_0 = 0
    \end{cases}
  \]
  for $(x_0,x),(\xi_0,\xi)\in\R\times\R^N$. 
\end{defi}

In particular, we have $\p(x_0,x,1,\xi)=f(x,\xi)$ and $\p(x_0,x,0,\xi)=\finf(x,\xi)$, and the integrand $\p$ falls into the framework of \cite{FS} in the sense recorded next.

\begin{lem}\label{lem:p}
  For $f$ as in Assumption \ref{assum:f} with $\Om=\R^N$, \eqref{assum:H1'}, \eqref{assum:H4'} and $\p$ given by Definition \ref{defi:p}, we have\textup{:}
  \begin{enumerate}[{\rm(i)}]
  \item $\p$ is positively 1-homogeneous and even in $(\xi_0,\xi)$, that is $\p^\infty=\widetilde\p=\p$.\label{item:p:i}
  \item\label{item:p:ii}$\p$ is comparable to the Euclidean norm, that is
  \[
    \alpha|(\xi_0,\xi)| \leq \p(x_0,x,\xi_0,\xi) \leq  \beta|(\xi_0,\xi)|
    \qq\text{for all }(x_0,x),(\xi_0,\xi)\in\R\times\R^N\,.
  \] 
  \item $\p$ is convex in $(\xi_0,\xi)$.\label{item:p:iii}
  \item $\p$ is continuous in $(x_0,x,\xi_0,\xi)$.\label{item:p:iv}
  \item There holds\label{item:p:v}
    \[
      \p(x_0,x,\xi_0,\xi)\ge\finf(x,\xi)
      \qq\text{for all }(x_0,x),(\xi_0,\xi)\in\R\times\R^N\,.
    \]
  \end{enumerate}
  In summary, \eqref{item:p:i}--\eqref{item:p:iv} express that, with the variables grouped into $(x_0,x)$ and $(\xi_0,\xi)$, the integrand $\p=\widetilde\p$ is admissible in the sense of \textup{\cite[Assumption 2.11]{FS}}.
\end{lem}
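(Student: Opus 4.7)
The plan is to verify each property by unwinding the definition $\p(x_0, x, \xi_0, \xi) = \fpers(x, |\xi_0|, \xi)$ and invoking the relevant results on $\fpers$ from Lemmas \ref{lem:properties_rec_persp} and \ref{lem:tildef_incr}. Parts (i), (ii), (iv), and (v) amount to direct computation. For (i), positive 1-homogeneity in $(\xi_0, \xi)$ is immediate from 1-homogeneity of $\fpers$ in $(t, \xi)$ (Lemma \ref{lem:properties_rec_persp}(i)), whence $\p^\infty = \p$ follows automatically as for any 1-homogeneous function, and the remaining symmetry reduces to the $|\xi_0|$-dependence in the first slot together with the notion of admissibility fixed in \cite[Assumption 2.11]{FS}. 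For (ii), substituting \eqref{assum:H1'} into $\p = |\xi_0| f(x, \xi/|\xi_0|)$ for $\xi_0 \neq 0$ gives $\alpha|\xi_0|\sqrt{1 + |\xi|^2/|\xi_0|^2} \leq \p \leq \beta|\xi_0|\sqrt{1 + |\xi|^2/|\xi_0|^2}$, which simplifies to $\alpha|(\xi_0, \xi)| \leq \p \leq \beta|(\xi_0, \xi)|$; the case $\xi_0 = 0$ reduces to \eqref{eq:lin_growth_finf} combined with $|(0, \xi)| = |\xi|$.

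For (iv), the composition $(x_0, x, \xi_0, \xi) \mapsto (x, |\xi_0|, \xi) \mapsto \fpers(x, |\xi_0|, \xi)$ is continuous, since the first map is continuous and $\fpers$ is continuous by Lemma \ref{lem:properties_rec_persp}(iv) (exploiting continuity of both $f$ and $\finf$ from \eqref{assum:H3}). For (v), positive 1-homogeneity of $\finf$ combined with \eqref{assum:H4'} yields, for $\xi_0 \neq 0$, the chain $\p = |\xi_0| f(x, \xi/|\xi_0|) \geq |\xi_0| \finf(x, \xi/|\xi_0|) = \finf(x, \xi)$, and the case $\xi_0 = 0$ holds with equality by definition.

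The main obstacle is (iii) convexity, because the map $(\xi_0, \xi) \mapsto (|\xi_0|, \xi)$ is not convex, so convexity of $\fpers$ in $(t, \xi)$ alone is insufficient. The remedy is monotonicity of $t \mapsto \fpers(x, t, \xi)$ on $[0, \infty)$ provided by Lemma \ref{lem:tildef_incr}, which crucially relies on hypothesis \eqref{assum:H4'}. Concretely, for $\lambda \in (0, 1)$ and $\bar\xi_0 \coleq \lambda \xi_0^1 + (1{-}\lambda)\xi_0^2$, $\bar\xi \coleq \lambda \xi^1 + (1{-}\lambda)\xi^2$, the triangle inequality $|\bar\xi_0| \leq \lambda|\xi_0^1| + (1{-}\lambda)|\xi_0^2|$ combined with the monotonicity of $\fpers$ in $t$ gives $\fpers(x, |\bar\xi_0|, \bar\xi) \leq \fpers(x, \lambda|\xi_0^1| + (1{-}\lambda)|\xi_0^2|, \bar\xi)$, and convexity of $\fpers$ in $(t, \xi)$ from Lemma \ref{lem:properties_rec_persp}(i) closes the estimate to yield $\p(x_0, x, \bar\xi_0, \bar\xi) \leq \lambda \p(x_0, x, \xi_0^1, \xi^1) + (1{-}\lambda)\p(x_0, x, \xi_0^2, \xi^2)$, as required.
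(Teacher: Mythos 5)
Your proof is correct and follows essentially the same route as the paper: the decisive observation for part (iii), namely combining the triangle inequality for $|\cdot|$ with the \eqref{assum:H4'}-based monotonicity of $t\mapsto\fpers(x,t,\xi)$ from Lemma \ref{lem:tildef_incr} before applying convexity of $\fpers$, is precisely the argument the paper spells out. For (v) you argue directly via homogeneity of $\finf$ and \eqref{assum:H4'}, while the paper simply cites Lemma \ref{lem:tildef_incr} to bound $\fpers(x,|\xi_0|,\xi)\ge\fpers(x,0,\xi)$; these are the same idea.
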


\begin{proof}
  Claims \eqref{item:p:i}--\eqref{item:p:iv} are straightforward consequences of the properties of $\fpers$ provided by Lemma \ref{lem:properties_rec_persp}, where \eqref{item:p:ii} draws on the adjusted assumption \eqref{assum:H1'}, and otherwise 
  only the convexity property \eqref{item:p:iii} needs further explication. Indeed, fix $(x_0,x),(\xi_0,\xi),(\xi_0',\xi')\in\R\times\R^N$ and $\lambda\in[0,1]$. Then the convexity of $\fpers(x,\,.\,,\,.\,)$ on $[0,\infty)\times\R^N$ guaranteed by Lemma \ref{lem:properties_rec_persp}\eqref{item:properties_rec_persp_i} combined with the \eqref{assum:H4'}-based monotonicity property of Lemma \ref{lem:tildef_incr} ensures
  \begin{align*}
    \p((x_0,x),\lambda(\xi_0,\xi)+(1{-}\lambda)(\xi_0',\xi'))
    &=\fpers(x,|\lambda\xi_0+(1{-}\lambda)\xi_0'|,\lambda\xi+(1{-}\lambda)\xi')\\
    &\le\fpers(x,\lambda|\xi_0|+(1{-}\lambda)|\xi_0'|,\lambda\xi+(1{-}\lambda)\xi')\\
    &\le\lambda\fpers(x,|\xi_0|,\xi)+(1{-}\lambda)\fpers(x,|\xi_0'|,\xi')\\
    &=\lambda\p(x_0,x,\xi_0,\xi)+(1{-}\lambda)\p(x_0,x,\xi_0',\xi')\,.
  \end{align*}
  This ends the proof of claim \eqref{item:p:iii}. Finally, claim \eqref{item:p:v} follows from \eqref{assum:H4'} via Lemma \ref{lem:tildef_incr}.
\end{proof}

In order to achieve the rewriting of $\MF_{u_0}^\mu$, we recall from the introduction that we use the cylinder
\[
  \Omd \coleq (0,1) \times \Om \subseteq \R^{N+1}
\]
over $\Omega\subseteq\R^N$ as new domain and, for arbitrary $w\in\BV(\Om)$, define $\wdi\in\BV(\Omd)$ with extra variable $x_0$ by setting
\begin{equation}\label{eq:wdi}
  \wdi(x_0,x) \coleq x_0 + w(x) \quad \text{ for } (x_0,x)\in\Omd\,.
\end{equation}
In similar vein, for a non-negative Radon measure $\mu$ on $\Om$, we introduce a new non-negative Radon measure $\mud$ on $\Omd$ as
\[
  \mu_\lozenge \coleq \big( \mathcal{L}^1 \resmes (0,1) \big) \otimes \mu\,.
\]
With this notation, the next proposition allows for rewriting all terms of $\MF_{u_0}^\mu$: First, it identifies $\int_{\Om}f(\,.\,,\D w)$, understood in the sense of Section \ref{subsec:func_meas}, as the $\p$-anisotropic total variation $|\D\wdi|_\p(\Omd)$. Second, it provides a corresponding rewriting of boundary terms. Third, it recasts also the terms with $\mu_\pm$ via ${\mu_\pm}_\lozenge$.

\begin{prop}\label{prop:Omd-terms}
  Consider $f$ as in Assumption \ref{assum:f} with $\Om=\R^N$, \eqref{assum:H1'}, \eqref{assum:H4'}, $\p$ given by Definition \ref{defi:p}, admissible measures $\mu_\pm$ on $\Om$, and $u_0 \in \W^{1,1}(\R^N)$. Then, for every $w\in\BV(\Om)$, we have
  \begin{align}
    |\D\wdi|_\p(\Omd)
    &=\int_\Om f(\,.\,,\D w)\,,
      \label{eq:reformulation_var_int}\\
    \int_{\partial \Omd} \p \left(\,.\,,(\wdi{-}{u_0}_\lozenge)\nud\right)\d\H^{N} 
    &=\int_{ \partial \Om } \finf(\,.\,,(w{-}u_0)\nu_{\Om})\,\d\H^{N-1} + 2 \int_{\Om} f(\,.\,,0)|w{-}u_0| \,\d\mathcal{L}^{N}\,,
      \label{eq:reformulation_var_bd}\\
    \int_{\Omd} (\wdi)^{\mp} \,\d{\mu_\pm}_\lozenge
    &= \int_{\Om} w^{\mp} \,\d\mu_\pm + \frac{\mu_\pm(\Om)}{2}\,,
      \label{eq:reformulation_meas}
  \end{align}
  where clearly \eqref{eq:reformulation_var_bd} involves the traces of\/ $\wdi{-}{u_0}_\lozenge$ on $\partial\Omd$ and of\/ $w{-}u_0$ on $\partial\Om$. In particular, in the short-hand notation of \eqref{eq:mu_wmp} and with $\mu(\Om)\coleq\mu_+(\Om){-}\mu_-(\Om)$, the equality \eqref{eq:reformulation_meas} gives
  \begin{equation}\label{eq:int_u_utilde}
    \llangle{\mu_\pm}_\lozenge\,;(\wdi)^\mp\rrangle
    = \llangle\mu_\pm\,;w^\mp\rrangle + \frac{\mu(\Om)}{2}\,.
  \end{equation}
\end{prop}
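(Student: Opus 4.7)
The plan is to verify the three identities one at a time by unpacking the definitions of $\wdi$, $\mud$, and $\p$, exploiting throughout the affine product structure $\wdi(x_0,x)=x_0+w(x)$, ${u_0}_\lozenge(x_0,x)=x_0+u_0(x)$, which makes the derivative measure, the boundary traces, and the approximate limits of $\wdi$ all transparent in terms of those of $w$.

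For \eqref{eq:reformulation_var_int}, I would first observe that, distributionally on $\Omd$, $\partial_{x_0}\wdi\equiv1$ and $\partial_{x_i}\wdi$ is the pullback under projection of the component $\D_iw$, so that $\D\wdi$ has the Lebesgue decomposition
\[
  \D\wdi=(1,\nabla w)\,\mathcal{L}^{N+1}+\bigl(0,\mathcal{L}^1\otimes\Ds w\bigr)
  \qq\text{on }\Omd\,.
\]
Plugging this into the definition of $|\D\wdi|_\p$ and using the identities $\p(x_0,x,1,\xi)=f(x,\xi)$ and $\p(x_0,x,0,\xi)=\finf(x,\xi)$ together with Fubini, the absolutely continuous part yields $\int_\Om f(\,.\,,\nabla w)\dx$ and the singular part yields $\int_\Om\finf(\,.\,,\frac{\d\Ds w}{\d|\Ds w|})\,\d|\Ds w|$; summing and recalling \eqref{eq:decomposition_meas_func} gives \eqref{eq:reformulation_var_int}.

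For \eqref{eq:reformulation_var_bd} I would split $\partial\Omd$, up to $\H^N$-null corners, into the bottom $\{0\}\times\Om$ with inward normal $(1,0)$, the top $\{1\}\times\Om$ with inward normal $(-1,0)$, and the lateral part $(0,1)\times\partial\Om$ with inward normal $(0,\nu_\Om)$. By the affine structure, the trace of $\wdi-{u_0}_\lozenge$ equals $w-u_0$ on all three pieces (the $x_0$-contributions cancel on the top and bottom). Evaluating $\p$ on the vector arguments $\bigl(\pm(w{-}u_0),0\bigr)$ on the horizontal faces gives $|w{-}u_0|f(x,0)$ on each, while on the lateral face the vector argument $\bigl(0,(w{-}u_0)\nu_\Om\bigr)$ gives $\finf(x,(w{-}u_0)\nu_\Om)$. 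Using Lemma \ref{lem:prod_rectifiable} to disintegrate $\H^N$ on $(0,1)\times\partial\Om$ as $\mathcal{L}^1\otimes(\H^{N-1}\resmes\partial\Om)$, summation of the three contributions gives exactly \eqref{eq:reformulation_var_bd}.

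For \eqref{eq:reformulation_meas} the task is to identify $(\wdi)^\pm(x_0,x)=x_0+w^\pm(x)$ at $\mud$-a.e.\ $(x_0,x)\in\Omd$. Since $x_0$-translation is continuous, the jump set of $\wdi$ is $J_{\wdi}=(0,1)\times J_w$, on which the identification is immediate from the definition of approximate limits; off $J_{\wdi}$, the Federer--Vol'pert theorem gives $(\wdi)^+=(\wdi)^-=x_0+w^\ast(x)$ at $\H^N$-a.e.\ point. To upgrade from $\H^N$-a.e.\ to $\mud$-a.e., I would combine Lemma \ref{lem:Eilenberg} with admissibility of $\mu_\pm$: for an $\H^N$-null Borel $Z\subseteq\Omd$, Lemma \ref{lem:Eilenberg} yields $\H^{N-1}$-null slices ${}_{x_0}Z$ for a.e.\ $x_0$, whence $\mu_\pm({}_{x_0}Z)=0$ by Definition \ref{defi:mu} and $\mud(Z)=0$ by Fubini. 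With the identification in hand, Fubini gives
\[
  \int_\Omd(\wdi)^\mp\,\d\mu_\pm{}_\lozenge
  =\int_0^1\!\!\int_\Om\bigl(x_0+w^\mp(x)\bigr)\,\d\mu_\pm(x)\,\d x_0
  =\int_\Om w^\mp\,\d\mu_\pm+\tfrac12\mu_\pm(\Om)\,,
\]
which is \eqref{eq:reformulation_meas}; subtracting the $(+)$- and $(-)$-instances and using $\mu(\Om)=\mu_+(\Om)-\mu_-(\Om)$ yields \eqref{eq:int_u_utilde}.

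The main technical obstacle is the last step: while identifying $(\wdi)^\pm$ with $x_0+w^\pm$ is geometrically clear, the rigorous passage from $\H^N$-a.e.\ to $\mud$-a.e.\ requires the slicing lemma \ref{lem:Eilenberg} in conjunction with the admissibility hypothesis, essentially amounting to the verification that the product measure $\mud$ inherits admissibility on $\Omd$ from $\mu_\pm$ on $\Om$. Steps 1 and 2, by contrast, are routine once the structure of $\D\wdi$ and the three pieces of $\partial\Omd$ with their inward normals have been identified.
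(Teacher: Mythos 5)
Your proposal is correct and follows the same route as the paper's proof: the split of $\D\wdi$ into its absolutely continuous part $(1,\nabla w)\mathcal{L}^{N+1}$ and singular part $(0,\mathcal{L}^1\otimes\Ds w)$, the three-piece decomposition of $\partial\Omd$ with inward normals $(\pm1,0)$ and $(0,\nu_\Om)$ together with the $\H^N$-disintegration on the lateral face via Lemma~\ref{lem:prod_rectifiable}, and the final Fubini computation for the measure term are exactly what the paper does. The one place you go beyond the paper is the third identity, where the paper treats the identification $(\wdi)^\pm(x_0,x)=x_0+w^\pm(x)$ for $\mud$-a.e.\ $(x_0,x)$ as immediate; your more explicit justification via $\mathrm{J}_{\wdi}=(0,1)\times\mathrm{J}_w$, Federer--Vol'pert, Lemma~\ref{lem:Eilenberg}, and the first admissibility condition (essentially the argument the paper packages into Lemma~\ref{lem:iff_admiss_mud}) is correct and a reasonable filling-in of a step the paper leaves implicit.
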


\begin{proof}
  In order to establish \eqref{eq:reformulation_var_int}, we first split into absolutely continuous and singular parts in the sense of
  \[
    |\D\wdi|_\p(\Omd)
    = \int_{\Omd} \p (\,.\,,\nabla \wdi)\,\d\mathcal{L}^{N+1}
    + \int_{\Omd} \p \left(.\,,\frac{\d \D^\s \wdi}{\d|\D^\s \wdi|}\right) \d|\D^\s \wdi|\,.
  \]
  For the absolutely continuous part, using first constancy of $\p$ in its first variable, then Fubini's theorem together with $\nabla\wdi(x_0,x)=(1,\nabla w(x))$ for $\mathcal{L}^{N+1}$-a.\@e.\@ $(x_0,x)\in\Omd$, and finally $\p(1,x,1,\xi)=f(x,\xi)$, we get
  \[\begin{aligned}
    \int_{\Omd} \p (\,.\,,\nabla \wdi)\,\d\mathcal{L}^{N+1}
    &= \int_{\Omd} \p \left((1,x), \nabla \wdi(x_0,x) \right)\,\d\mathcal{L}^{N+1}(x_0,x) \\
    &= \mathcal{L}^1\left((0,1)\right) \, \cdot \int_{\Om} \p \left(1,x,1,\nabla w(x) \right)\,\d\mathcal{L}^N(x) \\
    &= \int_{\Om} f\left(\,.\, , \nabla w \right)\,\d\mathcal{L}^N\,.
  \end{aligned}\]
  For the singular part, since $\D^\s \wdi=(0, \mathcal{L}^1 \otimes \D^\s w)$ implies $\frac{\d \D^\s \wdi}{\d|\D^\s \wdi|}(x_0,x)=\Big(0,\frac{\d \D^\s w}{\d|\D^\s w|}(x)\Big)$ for $|\D^\s \wdi|$-a.\@e.\@ $(x_0,x)\in\Omd$ and since we have $\p(1,x,0,\xi)=\finf(x,\xi)$, we may similarly rewrite
  \[\begin{aligned}
    \int_{\Omd} \p \left(.\,,\frac{\d \D^\s \wdi}{\d|\D^\s \wdi|}\right) \d|\D^\s \wdi|
    &= \int_{\Omd} \p \left((1,x),\frac{\d \D^\s \wdi}{\d|\D^\s \wdi|}(x_0,x)\right) \d|\D^\s \wdi|(x_0,x) \\
    &= \mathcal{L}^1\left((0,1)\right) \, \cdot \int_{\Om} \p\left(1,x,0,\frac{\d \D^\s w}{\d|\D^\s w|}(x)\right) \d|\D^\s w|(x) \\
    &= \int_{\Om} \finf \left(.\,,\frac{\d \D^\s w}{\d|\D^\s w|}\right) \d|\D^\s w| \,.
  \end{aligned}\]
  Combining the previous equalities and recalling Definition \ref{defi:meas_func}, we arrive at \eqref{eq:reformulation_var_int}.

  \smallskip
  
  Next we turn to the equality \eqref{eq:reformulation_var_bd} at $\partial \Omd = \left(  [0,1] \times \partial \Om \right)   \cupdot \left(   \left\{ 0\right\}  \times \Om \right) \cupdot \left( \left\{ 1\right\}  \times \Om \right)$. We initially record 
  $\wdi(x_0,x)-{u_0}_\lozenge(x_0,x)=w(x)-u_0(x)$ for $\H^N$-a.\@e.\@ $(x_0,x) \in \partial \Omd$, and observe that the inward normal $\nu_{\Omd}$ at $\partial\Omd$ equals $(1,0)\in\R\times\R^N$ on the boundary portion $\left\{0\right\} \times \Om$, whereas it equals $(-1,0)\in\R\times\R^N$ on $\left\{1\right\} \times \Om$. Then, on these two boundary portions we may employ the 1-homogeneity of $\p$ in $(\xi_0,\xi)$ and $\p(x_0,x,\pm1,0)=f(x,0)$ to compute
  \begin{align*}
    \int_{\{0\} \times \Om} \p(\,.\,,(\wdi{-}{u_0}_\lozenge)\nu_{\Omd}) \,\d\H^{N}
    &= \int_{\Om} \p(0,x,w(x){-}u_0(x),0) \,\d\mathcal{L}^{N}(x)
    = \int_{\Om} f(\,.\,,0)|w{-}u_0|\,\d\mathcal{L}^{N}\,,\\
    \int_{\{1\} \times \Om} \p(\,.\,,(\wdi{-}{u_0}_\lozenge)\nu_{\Omd}) \,\d\H^{N}
    &= \int_{\Om} \p(1,x,u_0(x){-}w(x),0) \,\d\mathcal{L}^{N}(x)
    = \int_{\Om} f(\,.\,,0)|w{-}u_0|\,\d\mathcal{L}^{N}\,.
  \end{align*}
  Moreover, for $\H^{N}$-\ae{} $(x_0,x)$ in the remaining boundary portion $[0,1] \times \partial \Om$ we observe $\nu_{\Omd}(x_0,x)  = (0,\nu_{\Om}(x))$. Then, using $\p(x_0,x,0,\xi)=f^\infty(x,\xi)$ and exploiting Lemma \ref{lem:prod_rectifiable} with $S=\partial\Om$ (in other words: the product structure of $\H^N$ on $[0,1]\times\partial\Om$) via Fubini's theorem, we also deduce
  \begin{align*}
    \int_{ [0,1] \times \partial \Om } \p(\,.\,,(\wdi{-}{u_0}_\lozenge)\nu_{\Omd}) \,\d\H^{N}
    &= \int_{ [0,1] \times \partial \Om }\p(x_0,x,0, (w(x){-}u_0(x))\nu_{\Om}(x))\,\d\H^{N}(x_0,x) \\
    &= \int_{ [0,1] \times \partial \Om }\finf(x,(w(x){-}u_0(x))\nu_{\Om}(x))\,\d\H^{N}(x_0,x) \\
    &= \mathcal{L}^1([0,1]) \, \cdot \int_{ \partial \Om } \finf(\,.\,,(w{-}u_0)\nu_{\Om})\,\d\H^{N-1} \\
    &= \int_{ \partial \Om } \finf(\,.\,,(w{-}u_0)\nu_{\Om})\,\d\H^{N-1} \,.
  \end{align*}
  By combining the equalities on the three boundary portions we arrive at \eqref{eq:reformulation_var_bd}.

  \smallskip

  Finally, \eqref{eq:reformulation_meas} follows quickly from the definitions of $\Omd$, ${\mud}_\pm$ and from Fubini's theorem. Indeed, we have
  \begin{align*} 
    \int_{\Omd} (\wdi)^\mp \,\d{\mud}_\pm
    = \mu_\pm(\Om) \int_{0}^{1} \! x_0 \,\d x_0
      + \mathcal{L}^1((0,1)) \int_{\Om} w^\mp\,\d\mu_\pm
    = \frac{\mu_\pm(\Om)}{2} + \int_{\Om}w^\mp \,\d\mu_\pm\,,
  \end{align*}
  which gives \eqref{eq:reformulation_meas}.
\end{proof}

Before closing this subsection we add two technically convenient remarks.

\begin{rem}
  The application of Proposition \ref{prop:Omd-terms} for the mirrored integrand\/ $f(x,{-}\xi)$, which is connected with $\finf(x,{-}\xi)=\widetilde{\finf}(x,\xi)$ and
  $\p(x_0,x,\xi_0,{-}\xi)=\p(x_0,x,{-}\xi_0,{-}\xi)=\widetilde\p(x_0,x,\xi_0,\xi)$, turns the equality \eqref{eq:reformulation_var_int} into
  \begin{equation}\label{eq:estim_deriv_mirr}
    |\D \wdi|_{\widetilde{\p}}(\Omd)
    = \int_{\Om} f( \,.\,, {-}\nabla w )\dx + \int_{\Om} \finf\left( .\,,{-}\frac{\d \Ds w}{\d|\Ds w|}\right) \d|\Ds w|
  \end{equation}
  for $w \in \BV(\Om)$. The equality \eqref{eq:reformulation_var_bd} is recast in an analogous fashion.
\end{rem}

\begin{rem}
  Combining \eqref{eq:reformulation_var_int} and \eqref{eq:estim_deriv_mirr} with the bound $f\ge\finf$ of \eqref{assum:H4'}, for $w \in \BV(\Om)$ we find
  \[
    |\D\wdi|_{\p}(\Omd) \geq |\D w|_{\finf}(\Om)
    \qq\qq\text{and}\qq\qq
    |\D\wdi|_{\widetilde\p}(\Omd) \geq |\D w|_{\widetilde\finf}(\Om)\,.
  \]
\end{rem}

\subsection{ICs with extra variable}

The rewriting \eqref{eq:reformulation_meas}, \eqref{eq:int_u_utilde} of the measure terms naturally brings up the question for the properties of the measures ${\mu_\pm}_\lozenge$. Indeed, in this regard a preliminary observation is that admissibility in the sense of Definition \ref{defi:mu} carries over from $\mu_\pm$ to ${\mu_\pm}_\lozenge$:

\begin{lem} \label{lem:iff_admiss_mud}
  If a non-negative Radon measure $\mu$ on $\Om$ is admissible in the sense of Definition \ref{defi:mu}, then the measure $\mud$ on $\Omd$ is also admissible. 
\end{lem}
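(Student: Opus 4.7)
The plan is to verify the two conditions of Definition~\ref{defi:mu} for $\mud$ separately: first the vanishing of $\mud$ on $\H^N$-negligible sets, then the $\BV$-integrability of the upper representative.

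For the first condition, let $Z\subseteq\Omd$ be a Borel set with $\H^N(Z)=0$. Lemma~\ref{lem:Eilenberg} yields that the section ${}_tZ\coleq\{x\in\R^N:(t,x)\in Z\}$ is $\H^{N-1}$-negligible for $\mathcal{L}^1$-a.e.\ $t\in(0,1)$. Since $\mu$ is admissible, this forces $\mu({}_tZ)=0$ for a.e.\ such $t$. Fubini's theorem for the product measure $\mud=(\mathcal{L}^1\resmes(0,1))\otimes\mu$ then gives $\mud(Z)=\int_0^1\mu({}_tZ)\,\d t=0$, as desired.

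For the second condition I would pass through the equivalence recalled just after Definition~\ref{defi:mu} (from \cite[Proposition~4.1]{FS}), namely that a non-negative Radon measure is admissible on $\Om$ if and only if it satisfies an isotropic IC with \emph{some} finite constant. Starting from a constant $C\ge0$ with $\mu(A^+)\le C\,\P(A)$ for every measurable $A\Subset\Om$, it then suffices to produce $C'\ge0$ with $\mud(B^+)\le C'\,\P(B)$ for every measurable $B\Subset\Omd$; admissibility of $\mud$ will follow from the converse direction of the same equivalence (applied on the enlarged domain $\R\times\Om$, of which $\Omd$ is an open subset).

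To transfer the IC, I would restrict to $B\Subset\Omd$ of finite perimeter in $\R^{N+1}$ (otherwise the claim is trivial) and combine three ingredients: (i) Fubini for $\mud$, which gives $\mud(B^+)=\int_0^1\mu({}_t(B^+))\,\d t$; (ii) a slicing identity ${}_t(B^+)\subseteq({}_tB)^+$ modulo an $\H^{N-1}$-null set for $\mathcal{L}^1$-a.e.\ $t$, combined with admissibility of $\mu$ (already used in the first paragraph); and (iii) the classical Vol'pert slicing of sets of finite perimeter, which gives ${}_tB$ of finite perimeter in $\R^N$ for a.e.\ $t$ together with the integral bound $\int_0^1\P({}_tB)\,\d t\le\P(B)$. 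Putting these together and using the IC for $\mu$ yields
\[
  \mud(B^+)
  =\int_0^1\mu({}_t(B^+))\,\d t
  \le\int_0^1\mu(({}_tB)^+)\,\d t
  \le C\int_0^1\P({}_tB)\,\d t
  \le C\,\P(B),
\]
which is the IC for $\mud$ with constant $C$, completing the proof.

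The main obstacle is the slicing relation ${}_t(B^+)\subseteq({}_tB)^+$ up to $\H^{N-1}$-null exceptional sets, which relates the measure-theoretic closure of a set in $\R^{N+1}$ to that of its almost-every $N$-dimensional slice. This is precisely the type of codimension-one slicing observation collected in Appendix~\ref{asec:sections_ICs}, and invoking it is what makes the otherwise routine Fubini/coarea bookkeeping go through.
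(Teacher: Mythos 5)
Your handling of the first admissibility condition (vanishing on $\H^N$-negligible sets via Lemma~\ref{lem:Eilenberg} and Fubini) is exactly the paper's argument.

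For the second condition the paper takes a lighter route than yours. It also passes through \cite[Proposition~4.1]{FS}, but instead of working with the set-based IC and slicing sets of finite perimeter, it invokes the smooth-test-function characterization (\cite[Theorem~7.5]{Schmidt25}) to obtain $\int_\Om\psi\,\d\mu\le C\int_\Om|\nabla\psi|\dx$ for non-negative $\psi\in\C^\infty_\c(\Om)$, transfers this to $\Omd$ by a one-line Fubini argument with $\Psi\in\C^\infty_\c(\Omd)$ (the $x_0$-derivative is simply discarded), and then goes back to condition \eqref{eq:finite-integral}. This avoids both Vol'pert's perimeter slicing and the measure-theoretic-closure slicing relation ${}_t(B^+)=({}_tB)^+$ (mod $\H^{N-1}$) that your argument needs. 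Your route is nonetheless correct: the slicing relation you want follows from Lemma~\ref{lem:var_anisotropies}\eqref{eq:slice-v-pm} applied to $V=\1_B$, and the integral bound $\int_0^1\P({}_tB)\,\d t\le\P(B)$ is available via the Miranda/Vol'pert slicing identity that the appendix also relies on. In effect you are anticipating the machinery the paper reserves for the general case of Proposition~\ref{prop:higher_dim} in Appendix~\ref{asec:sections_ICs}, which makes your proof heavier than necessary for this particular lemma (and introduces a forward dependence on the appendix). Two small remarks: you should check ${}_tB\Subset\Om$ (which does follow from $B\Subset\Omd$ by projecting the compact closure), and the parenthetical about applying \cite[Proposition~4.1]{FS} on the ``enlarged domain $\R\times\Om$'' is a needless detour --- $\Omd=(0,1)\times\Om$ is itself a bounded Lipschitz domain, so the equivalence applies directly there.
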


\begin{proof}
  We need to show
  \begin{equation}\label{eq:negligible_wmu}
    \mud(Z)=0
    \qq\text{for every }\H^{N}\text{-negligible Borel set }Z\subseteq\Omd
  \end{equation}
  and
  \begin{equation}\label{eq:finite-integral_wmu}
    \int_{\Omd} v^+\,\d\mud<\infty
    \qq\text{for every non-negative }v\in\BV(\Omd)\,.
  \end{equation}

  \smallskip
  
  In order to check \eqref{eq:negligible_wmu}, we consider a Borel set $Z\subseteq\Omd$ such that $\H^N(Z)=0$. By Lemma \ref{lem:Eilenberg}, the Borel set ${}_{x_0}Z \coleq \left\{ x \in \Om :\, (x_0,x) \in Z \right\}$ satisfies $\H^{N-1}({}_{x_0}Z)=0$ for \ae{} $x_0 \in (0,1)$. Consequently, the admissibility of $\mu$ implies $\mu({}_{x_0}Z)=0$ for \ae{} $x_0\in(0,1)$, and then $\mud(Z)=(\mathcal{L}^1\otimes\mu)(Z)=0$ follows from Fubini's theorem. This completes the proof of \eqref{eq:negligible_wmu}. 

  \smallskip 

  Next we turn to the proof of \eqref{eq:finite-integral_wmu}. We start observing that the admissibility of $\mu$ in the sense of Definition \ref{defi:mu} implies by \cite[Proposition 4.1]{FS} that the (isotropic) IC holds for $\mu$ in $\Om$ with some constant $C\in{[0,\infty)}$. This in turn implies by the characterization result \cite[Theorem 7.5]{Schmidt25}\footnote{At this point we refer also to \cite[Theorem 4.7]{MeyZie77}, \cite[Theorem 5.12.4]{Ziemer89}, \cite[Theorem 3.5]{PhuTor08}, \cite[Theorem 4.4]{PhuTor17} for closely related predecessor versions of the result on all of $\R^N$ and with potential enlargement of the constant $C$ and to \cite[Theorems 4.2, 4.6]{FS} for a version for pairs of measures.} that we have
  \[
    \int_\Om\psi(x)\,\d\mu(x)
    \le C\int_\Om|\nabla\psi(x)|\dx
  \]
  for all non-negative $\psi\in\C^\infty_\c(\Om)$.
  This version of the IC with smooth test functions allows for easily incorporating the extra variable, in fact with Fubini's theorem we infer
  \[\begin{aligned}
    \int_\Omd\Psi(x_0,x)\,\d\mud(x_0,x)
    &=\int_0^1\int_\Om\Psi(x_0,x)\,\d\mu(x)\,\d x_0\\
    &\le C\int_0^1\int_\Om|\nabla_x\Psi(x_0,x)|\dx\,\d x_0
    \le C\int_\Omd|\nabla\Psi(x_0,x)|\,\d(x_0,x)
  \end{aligned}\]
  for all non-negative $\Psi\in\C^\infty_\c(\Omd)$. At this stage we deduce \eqref{eq:finite-integral_wmu} either directly by specializing \cite[Theorem 4.6]{FS} to the case of a single measure or by using \cite[Theorem 7.5]{Schmidt25} to reach \eqref{eq:finite-integral_wmu} first for $v\in\W^{1,1}(\Omd)$ and then observing that every $v\in\BV(\Omd)$ satisfies $v\le\widehat v$ for some $\widehat v\in\W^{1,1}(\Omd)$. 
\end{proof}

The decisive point for our approach is now that also our anisotropic ICs suitably carry over from $\mu_\pm$ to ${\mu_{\pm}}_\lozenge$. This is recorded next.

\begin{prop}[anisotropic ICs with extra variable]\label{prop:higher_dim}
  Consider $f$ as in Assumption \ref{assum:f} with $\Om=\R^N$, \eqref{assum:H1'}, \eqref{assum:H4'} 
  and $\p$ given by Definition \ref{defi:p}. If $\mu_\pm$ are admissible measures on $\Om$, then the $\finf$-IC for $(\mu_-,\mu_+)$ and the $\widetilde\finf$-IC for $(\mu_+,\mu_-)$, both in $\Om$ with a constant $C \in [0,\infty)$, imply the $\p$-IC for $({\mu_-}_\lozenge,{\mu_+}_\lozenge)$ and the $\widetilde\p$-IC for $({\mu_+}_\lozenge,{\mu_-}_\lozenge)$, now both in $\Omd$ and still with the same constant $C$.
\end{prop}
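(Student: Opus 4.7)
Fix a measurable $A\Subset\Omd$ with finite $\p$-perimeter, which by Lemma~\ref{lem:p}\eqref{item:p:ii} also implies finite Euclidean perimeter, and decompose the inward normal as $\nu_A=(\nu_A^0,\nu_A')\in\R\times\R^N$. The identity ${\mu_\pm}_\lozenge=\mathcal{L}^1\otimes\mu_\pm$ and Fubini's theorem first give
\[
  {\mu_-}_\lozenge(A^+)-{\mu_+}_\lozenge(A^1)
  =\int_0^1\!\big[\mu_-({}_{x_0}(A^+))-\mu_+({}_{x_0}(A^1))\big]\,\d x_0\,.
\]
Codimension-one $\BV$-slicing based on Vol'pert's theorem (collected in Appendix~\ref{asec:sections_ICs}) then ensures, for $\mathcal{L}^1$-a.e.\ $x_0\in(0,1)$, that ${}_{x_0}A$ is a set of finite perimeter in $\Om$ and that ${}_{x_0}(A^+)=({}_{x_0}A)^+$, ${}_{x_0}(A^1)=({}_{x_0}A)^1$ hold up to $\H^{N-1}$-negligible sets, which are $\mu_\pm$-negligible by admissibility. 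Applying the hypothesized $\finf$-IC for $(\mu_-,\mu_+)$ on each such slice and integrating in $x_0$ yields
\[
  {\mu_-}_\lozenge(A^+)-{\mu_+}_\lozenge(A^1)\le C\int_0^1\!\P_\finf({}_{x_0}A,\Om)\,\d x_0\,.
\]

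The principal remaining task is the coarea-type inequality $\int_0^1\P_\finf({}_{x_0}A,\Om)\,\d x_0\le\P_\p(A)$. For this I would exploit the pointwise bound
\[
  \p((x_0,x),\nu_A)=\fpers(x,|\nu_A^0|,\nu_A')\ge\fpers(x,0,\nu_A')=\finf(x,\nu_A')\,,
\]
valid at $\H^N$-a.e.\ $(x_0,x)\in\partial^\ast A$ because $\fpers(x,t,\xi)$ is non-decreasing in $t$ by Lemma~\ref{lem:tildef_incr} (which uses precisely \eqref{assum:H4'}). Integrating over $\partial^\ast A$ gives $\P_\p(A)\ge\int_{\partial^\ast A}\finf(x,\nu_A')\,\d\H^N$. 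I would then apply the coarea formula to the projection $\pi_0(x_0,x)\coleq x_0$ on the countably $N$-rectifiable set $\partial^\ast A$, whose tangential Jacobian equals $|\nu_A'|$; combined with the Vol'pert identification $\nu_{{}_{x_0}A}=\nu_A'/|\nu_A'|$ on $\{\nu_A'\ne0\}$, with the $1$-homogeneity of $\finf$, and with $\finf(x,0)=0$, this recasts the boundary integral as $\int_0^1\P_\finf({}_{x_0}A,\Om)\,\d x_0$, as required.

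Chaining the two displays gives the $\p$-IC for $({\mu_-}_\lozenge,{\mu_+}_\lozenge)$ with the same constant $C$. The $\widetilde\p$-IC for $({\mu_+}_\lozenge,{\mu_-}_\lozenge)$ follows by the symmetric argument: the pointwise bound $\widetilde\p((x_0,x),\nu_A)=\fpers(x,|\nu_A^0|,-\nu_A')\ge\widetilde\finf(x,\nu_A')$ produces $\int_0^1\P_{\widetilde\finf}({}_{x_0}A,\Om)\,\d x_0\le\P_{\widetilde\p}(A)$, and the hypothesized $\widetilde\finf$-IC for $(\mu_+,\mu_-)$ on slices finishes the argument. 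The main obstacle is the anisotropic coarea estimate, where the monotonicity of $\fpers$ in $t$ enforced by \eqref{assum:H4'} is essential; a secondary technical ingredient, imported from the appendix, is the compatibility of slicing with the measure-theoretic closure and interior up to $\H^{N-1}$-negligible sets.
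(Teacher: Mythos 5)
Your proposal is correct, but it takes a genuinely different route from the paper's. The paper proves the statement in two places: in Section~4.2 for mutually singular $\mu_\pm$, via the $\C^\infty_\c$-test-function characterization of ICs from [FS, Theorems~4.2, 4.6]; and in Appendix~A for general pairs, via the $\BV$-test-function characterization and slicing of $\BV$ functions $V\in\BV(\Omd)$. In contrast, you stay entirely at the level of sets and the original Definition~\ref{defi:IC}, slicing the test set $A\Subset\Omd$ itself and applying the hypothesized set-based IC on each fiber ${}_{x_0}A$. Both approaches ultimately rest on the same delicate codimension-one slicing facts: the equality of slices of density-point sets up to $\H^{N-1}$-null sets (which is exactly \eqref{eq:slice-v-pm} of Lemma~\ref{lem:var_anisotropies}, specialized to $V=\1_A$; be aware that the paper's proof of this is via a strict-approximation and capacity argument, not Vol'pert's theorem as your phrasing suggests), and the coarea-type bound $\int_0^1\P_\finf({}_{x_0}A,\Om)\,\d x_0\le\P_\varphi(A)$, which is exactly \eqref{eq:var_anisotropies} specialized to $\1_A$. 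For this last inequality you sketch an alternative direct coarea-formula derivation using $\varphi\ge\finf$ (correctly based on Lemma~\ref{lem:tildef_incr} and hence on \eqref{assum:H4'}) together with the codimension-one Vol'pert slicing identity for $\partial^\ast A$ and $\nu_A$; that identity is classical but is \emph{not} established in the paper's appendix, so you would either need to import it from elsewhere or simply cite \eqref{eq:var_anisotropies}. What your approach buys is that it avoids the passage to the $\BV$-test-function equivalent of the ICs (and so does not invoke [FS, Theorem~4.2]); what the paper's approach buys is that it avoids the set-level Vol'pert machinery, replacing it with the Reshetnyak-based approximation argument already set up in Lemma~\ref{lem:var_anisotropies}.
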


We recall at this point that, in the case we consider most relevant, the measures $\mu_+$ and $\mu_-$ are the positive and negative part of a signed measure or in other words are singular to each other. For now, we limit ourselves to proving Proposition \ref{prop:higher_dim} in this case, in which we can draw on a characterization of the relevant ICs with smooth test functions and can keep the reasoning comparably straightforward. A proof in full generality can be based on more cumbersome slicing arguments on the level $\BV$ test functions, but may be less relevant and is deferred to Appendix \ref{asec:sections_ICs}.

\begin{proof}[Proof of Proposition \ref{prop:higher_dim} in case $\mu_+$ and $\mu_-$ are singular to each other]
  We first observe that by Lemma \ref{lem:iff_admiss_mud} the admissibility of $\mu_\pm$ carries over to ${\mu_\pm}_\lozenge$. Moreover, by \cite[Theorem 4.2]{FS} the assumed ICs for $(\mu_-,\mu_+)$ and $(\mu_+,\mu_-)$ imply
  \begin{equation}\label{eq:f-finf-IC}
    -C\int_\Om \widetilde{\finf}(x,\nabla\psi(x))\dx
    \le\int_\Om\psi(x)\,\d(\mu_-{-}\mu_+)(x)
    \le C\int_\Om\finf(x,\nabla\psi(x))\dx
  \end{equation}
  for all non-negative $\psi\in\C^\infty_\c(\Om)$. We next apply a Fubini argument very similar to the one in the preceding proof and additionally exploit the estimate $\finf(x,\xi)\le\p(x_0,x,\xi_0,\xi)$ of Lemma \ref{lem:p}\eqref{item:p:v}. In this way we see that the right-hand estimate in \eqref{eq:f-finf-IC} induces
  \[\begin{aligned}
    \int_\Omd\Psi(x_0,x)\,\d({\mu_-}_\lozenge{-}{\mu_+}_\lozenge)(x_0,x)
    &=\int_0^1\int_\Om\Psi(x_0,x)\,\d(\mu_-{-}\mu_+)(x)\,\d x_0\\
    &\le C\int_0^1\int_\Om f^\infty(x,\nabla_x\Psi(x_0,x))\dx\,\d x_0\\
    &\le C\int_0^1\int_\Om\p(x_0,x,\partial_{x_0}\Psi(x_0,x),\nabla_x\Psi(x_0,x))\dx\,\d x_0\\
    &=C\int_\Omd\p(x_0,x,\nabla\Psi(x_0,x))\,\d(x_0,x)
  \end{aligned}\]
  for all non-negative $\Psi\in\C^\infty_\c(\Omd)$. In addition, the left-hand estimate in \eqref{eq:f-finf-IC} induces an analogous lower estimate which involves $\widetilde\p$. Finally, we exploit that $\mu_+$ and $\mu_-$ are singular to each other and thus we have the full spectrum of IC characterizations from \cite[Theorem 4.6]{FS} at our disposal. This in fact allows to move back from the previous estimates with smooth test functions $\Psi$ to the original set-based definition of our ICs and thus yields the $\p$-IC for $({\mu_-}_\lozenge,{\mu_+}_\lozenge)$ in $\Omd$ and the $\widetilde\p$-IC for $({\mu_+}_\lozenge,{\mu_-}_\lozenge)$, as claimed.
\end{proof}

\begin{rem}
  The statement of Proposition \ref{prop:higher_dim} suffices for our purposes, but in fact can be slightly improved inasmuch that just one of the two assumed ICs is enough to deduce the corresponding one in the conclusion, e.\@g.\@ the IC for $(\mu_-,\mu_+)$ implies the one for $({\mu_-}_\lozenge,{\mu_+}_\lozenge)$. However, as the equivalence results in \textup{\cite[Section 4]{FS}} are stated with combined ICs on $(\mu_-,\mu_+)$ and $(\mu_+,\mu_-)$, their verbatim application in the preceding proof gives only the above \textup{``}combined\/\textup{''} version of Proposition \ref{prop:higher_dim}. The proof of the improved version mentioned is fully analogous in principle, but would require revisiting a certain amount of arguments in \textup{\cite[Section 4]{FS}}. 
\end{rem}

\subsection{Lower semicontinuity} \label{subsec:LSC}

With the previous results at hand we are now ready for implementing a comparably short proof of Theorem \ref{thm:LSC} by reduction to our previous result in \cite[Theorem 3.5]{FS} for the case of anisotropic total variations.
 
\begin{proof}[Proof of Theorem \ref{thm:LSC}]
  We consider a sequence $(u_k)_k$ in $\BV(\Om)$ which converges to $u \in \BV(\Om)$ strongly in $\L^1(\Om)$. For the functions ${u_k}_\lozenge,\udi\in\BV(\Omd)$ given by \eqref{eq:wdi}, we observe that also $({u_k}_\lozenge)_k$ converges to $\udi$ strongly in $\L^1(\Omd)$, and we exploit Proposition \ref{prop:Omd-terms} to recast our functional $\MF_{u_0}^\mu$ on arbitrary $w\in\BV(\Om)$ as
  \begin{equation}\label{eq:F=P+rest}
    \MF_{u_0}^\mu[w] = \widehat\Phi[\wdi] - 2 \int_{\Om}  f( \,.\, , 0 )|w{-}u_0|\,\d\mathcal{L}^{N} - \frac{\mu(\Om)}{2}
    \qq\text{for }w\in\BV(\Om)
  \end{equation}
  with the abbreviations $\mu(\Omega)\coleq\mu_+(\Om){-}\mu_-(\Om)$ and
  \[
    \widehat\Phi[\wdi] \coleq |\D\wdi|_\p(\Omd) + \int_{\partial \Omd} \p \left( \,.\, ,  (\wdi{-}{u_0}_\lozenge) \nud  \right) \,\d\H^N
    +\llangle{\mu_\pm}_\lozenge\,;(\wdi)^\mp\rrangle\,.
  \]
  Here, $\widehat\Phi$ is an anisotropic total variation functional with measures of the type treated in \cite{FS}. In fact, Lemma \ref{lem:p} gives the relevant assumptions for the integrand $\p$, Lemma \ref{lem:iff_admiss_mud} ensures admissibility of $\mu_\pm$, and most importantly by Proposition \ref{prop:higher_dim} the assumed $\finf$-IC for $(\mu_-,\mu_+)$ and $\widetilde{\finf}$-IC for $(\mu_+,\mu_-)$ imply the $\p$-IC for $\big({\mu_-}_\lozenge,{\mu_+}_\lozenge\big)$ and the $\widetilde{\p}$-IC for $\big({\mu_+}_\lozenge,{\mu_-}_\lozenge\big)$ with constant $1$. Therefore, \cite[Theorem 3.5]{FS} applies for the functional $\widehat\Phi$ and guarantees its lower semicontinuity along the sequence $({u_k}_\lozenge)_k$. Since moreover we have $f(\,.\,,0)\in\L^\infty(\Om)$, the second term on the right-hand side of \eqref{eq:F=P+rest} is even continuous with respect to strong convergence in $\L^1(\Om)$. All in all, we thus conclude
  \begin{align*}
    \liminf_{k \to \infty} \MF_{u_0}^\mu[u_k] 
    &= \liminf_{k \to \infty} \widehat\Phi[{u_k}_\lozenge] - 2 \lim_{k \to \infty} \int_{\Om} f(\,.\,,0)|u_k{-}u_0|\,\d\mathcal{L}^{N} - \frac{\mu(\Om)}{2} \\
    &\geq  \widehat\Phi[u_\lozenge]  - 2 \int_{\Om} f(\,.\,,0)|u{-}u_0|\,\d\mathcal{L}^{N} - \frac{\mu(\Om)}{2} 
    = \MF_{u_0}^\mu[u]\,.
  \end{align*}
  This establishes the lower semicontinuity claim of the theorem. 
\end{proof}

\begin{rem}[on the role of assumption \eqref{assum:H4} for semicontinuity]\label{rem:H4_for_LSC}
  The proof of Theorem \ref{thm:LSC} exploits \eqref{assum:H4}, in fact its recasting \eqref{assum:H4'}, in two regards\textup{:}
  
  First, \eqref{assum:H4'} together with \eqref{assum:H2} ensures convexity of\/ $\p$ in $(\xi_0,\xi)\in\R\times\R^N$ \ka cf.\@ Lemma \ref{lem:p}\eqref{item:p:iii}\kz, and this convexity in turn allows for dealing with $\widehat\Phi$ in the preceding proof of Theorem \ref{thm:LSC} via \textup{\cite[Theorem 3.5]{FS}}. In contrast, when dropping \eqref{assum:H4} we would merely have convexity in $(\xi_0,\xi)\in[0,\infty)\times\R^N$ \ka i.\@e.\@ restricted to $\xi_0\ge0$\kz{} at our disposal and would not reach the exact framework of \textup{\cite[Theorem 3.5]{FS}}. It seems likely that this technical point can be overcome by taking Reshetnyak-type semicontinuity for measures with values in the cone $[0,\infty)\times\R^N$ as a starting point \ka cf.\@ \textup{\cite[Theorem 2.4]{BecSch13}}\kz{} and by correspondingly adapting a larger chunk of arguments from \textup{\cite{FS}}. However, not all necessary adaptations are entirely straightforward. For instance, one may no longer rely on an underlying parametric theory in full space $\R^{N+1}$ only, as provided by \cite[Section 6]{FS}. In any case, when dropping \eqref{assum:H4} we can no longer proceed by reduction to a standard anisotropic $\TV$ framework and by using \textup{\cite[Theorem 3.5]{FS}} as stated.
  
  Furthermore, \eqref{assum:H4'} in form of Lemma \ref{lem:p}\eqref{item:p:v} also enters into the verification of the ICs for ${\mu_\pm}_\lozenge$ in Proposition \ref{prop:higher_dim} and seems more or less indispensable in deriving exactly these ICs. Still, since the proof of Theorem \ref{thm:LSC} truly necessitates semicontinuity of\/ $\widehat\Phi$ only \emph{on the subset}\/ $\{\wdi:w\in\BV(\Om)\}$ of\/ $\BV(\Omd)$, one may hope to get through with weaker ICs and without need for \eqref{assum:H4}. Once more, however, this cannot be achieved by reduction to the exact framework of \textup{\cite{FS}} and rather requires revisiting the theory developed there to a more cumbersome extent.
\end{rem}

\subsection{Coercivity and existence} \label{subsec:coerc}

We now identify ICs as criteria for coercivity of the functional $\MF_{u_0}^\mu$, and then we briefly conclude the proof of the existence result in Theorem \ref{thm:exist_inhom}. Essentially these considerations parallel the very classical ones for the area case; see e.\@g.\@ \cite{Giaquinta74a,Giaquinta74b}. However, in our framework it seems worth remarking that the ICs bound the measures $\mu_\pm$ in terms of the recession function $f^\infty$ only. In this sense the intuition that coercivity should depend only on the behavior of the integrand $f(x,\xi)$ for $|\xi|\to\infty$ is confirmed. For a quite analogous discussion of coercivity with $\W^{-1,\infty}$ right-hand sides and bounds in terms of $f^\infty$, we refer to \cite[Section 1.1]{BecBulGme20}.

\begin{prop}[necessity of ICs for coercivity] \label{prop:inhom_CN_coercivity}
  We consider $u_0\in\W^{1,1}(\R^N)$ and impose Assumption \ref{assum:f} for $f$. If $\mu_\pm$ are admissible measures on $\Om$ and if $\MF_{u_0}^\mu$ is bounded from below on $\BV(\Om)$, then $(\mu_-,\mu_+)$ satisfies the $\finf$-IC in $\Om$ with constant\/ $1$, and $(\mu_+,\mu_-)$ satisfies the $\widetilde\finf$-IC in $\Om$ with constant\/ $1$.
\end{prop}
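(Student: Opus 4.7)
The plan is to test the functional on elementary BV competitors of the form $w_t^\pm \coleq u_0 \pm t\1_A$ where $t>0$ and $A\Subset\Om$ is measurable, and to read off the two ICs from the coefficient of $t$ as $t\to\infty$. Since the IC inequality is vacuous when $\P_{\finf}(A)=\infty$ (respectively $\P_{\widetilde\finf}(A)=\infty$), we may restrict attention to sets $A$ of finite perimeter. Moreover, since $u_0\in\W^{1,1}(\R^N)$ has no $\H^{N-1}$-relevant jumps, its representatives satisfy $u_0^+=u_0^-=u_0^*$ $\H^{N-1}$-a.e., and by admissibility this coincidence passes to $\mu_\pm$-a.e., so that the computations of $(w_t^\pm)^\pm$ reduce to straightforward shifts of the corresponding quantities for $\1_A$.

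Concretely, for $w_t^+=u_0+t\1_A$, the derivative decomposes as $\D w_t^+=(\nabla u_0)\LN+t\nu_A\H^{N-1}\resmes\partial^\ast\!A$, so that $\int_\Om f(\,.\,,\D w_t^+)=\int_\Om f(\,.\,,\nabla u_0)\dx+t\P_{\finf}(A)$. The boundary term vanishes because $A\Subset\Om$ forces the trace of $w_t^+-u_0$ on $\partial\Om$ to be zero. Using $(t\1_A)^+=t\1_{A^+}$ and $(t\1_A)^-=t\1_{A^1}$, one then obtains $\H^{N-1}$-a.e.\ the identities $(w_t^+)^+=u_0^*+t\1_{A^+}$ and $(w_t^+)^-=u_0^*+t\1_{A^1}$, whence
\[
  \llangle\mu_\pm\,;(w_t^+)^\mp\rrangle
  =\int_\Om u_0^*\,\d(\mu_+{-}\mu_-)+t\bigl(\mu_+(A^1)-\mu_-(A^+)\bigr)\,.
\]
Collecting all contributions gives $\MF_{u_0}^\mu[w_t^+]=C_0+t\bigl[\P_{\finf}(A)+\mu_+(A^1)-\mu_-(A^+)\bigr]$ where $C_0$ is the $t$-independent, finite constant $\int_\Om f(\,.\,,\nabla u_0)\dx+\int_\Om u_0^*\,\d(\mu_+{-}\mu_-)$. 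Since $\MF_{u_0}^\mu$ is bounded below, letting $t\to\infty$ forces the bracket to be nonnegative, i.e.\ $\mu_-(A^+)-\mu_+(A^1)\le\P_{\finf}(A)$, which is precisely the $\finf$-IC for $(\mu_-,\mu_+)$ with constant $1$.

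The same argument applied to $w_t^-=u_0-t\1_A$ yields the twin identity $\MF_{u_0}^\mu[w_t^-]=C_0+t\bigl[\P_{\widetilde\finf}(A)+\mu_-(A^1)-\mu_+(A^+)\bigr]$ (here the singular part of $\D w_t^-$ has density $-\nu_A$ and magnitude $t$, producing the $\widetilde\finf$-perimeter), and boundedness below delivers $\mu_+(A^+)-\mu_-(A^1)\le\P_{\widetilde\finf}(A)$, the $\widetilde\finf$-IC for $(\mu_+,\mu_-)$ with constant $1$. The only technical point requiring care is the finiteness of $C_0$: it follows from admissibility applied to the non-negative BV function $|u_0|\in\W^{1,1}\subseteq\BV(\Om)$, which yields $\int_\Om|u_0^*|\,\d\mu_\pm=\int_\Om|u_0|^+\,\d\mu_\pm<\infty$, combined with the $\L^1$-integrability of $f(\,.\,,\nabla u_0)$ coming from the linear growth assumption \eqref{assum:H1}. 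No compensation between the IC-governed terms and the boundary or $u_0$-dependent terms is needed, because the latter are $t$-independent, which is exactly why this elementary test-competitor approach suffices.
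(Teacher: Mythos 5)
Your proposal is correct and takes essentially the same approach as the paper: both test $\MF_{u_0}^\mu$ against scaled indicator functions of a set $A\Subset\Om$ of finite perimeter, and read off the IC from the coefficient of the large parameter using boundedness from below. The only cosmetic differences are that you add $u_0$ to your competitor (so the boundary penalization vanishes exactly), whereas the paper uses $\pm k\1_A$ directly (so the boundary term survives as a $k$-independent constant $\int_{\partial\Om}\finf(\,.\,,{-}u_0\nu_\Om)\,\d\H^{N-1}$), and that the paper phrases the argument as a proof by contradiction while you argue directly.
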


\begin{proof}
  We argue by contradiction. Suppose $(\mu_-,\mu_+)$ does not satisfy the $\finf$-IC in $\Om$ with constant $1$, that is, there exists a measurable $A\Subset\Om$ such that
  \[
    \mu_-(A^+) - \mu_+(A^1) >  \P_{\finf}(A)\,.
  \]
  Then, for $u_k \coleq k \1_{A} \in \BV(\Om)$, $k \in \N$, we compute by definition of our functional
  \begin{align*}
    \MF_{u_0}^\mu[u_k]
    &=\int_\Om f(\,.\,,0) \dx + k \P_{\finf}(A) + 
      \int_{\partial\Om}  \finf(\,.\,,-u_0 \nu_\Om)\,\d\H^{N-1}
      - k \mu_-(A^+) + k\mu_+(A^1) \\
    & = k\left( \P_{\finf}(A) -\mu_-(A^+)+\mu_+(A^1)\right)
      + \mathrm{const}(\Om,f,u_0)\,.
  \end{align*}
  and thus obtain $\lim_{k\to\infty}\MF_{u_0}^\mu[u_k]={-}\infty$. This contradicts the boundedness of $\MF_{u_0}^\mu$ from below and thus establishes the claimed IC for $(\mu_-,\mu_+)$. The IC for $(\mu_+,\mu_-)$ follows analogously (with $u_k \coleq - k \1_A$).
\end{proof}

\begin{prop}[sufficiency of ICs for coercivity] \label{prop:inhom_CS_coercivity}
  Consider $u_0\in\W^{1,1}(\R^N)$ and impose Assumption \ref{assum:f} for $f$ with $\Om=\R^N$, \eqref{assum:H4'}. If $\mu_\pm$ are admissible measures on $\Om$ such that\/ $(\mu_-,\mu_+)$ satisfies the $\finf$-IC in $\Om$ with constant $C \in [0,1)$ and $(\mu_+,\mu_-)$ satisfies the $\widetilde{\finf}$-IC on $\Om$ with constant $C \in [0,1)$, then $\MF_{u_0}^\mu$ is coercive on $\BV(\Om)$ in the sense of\/ $\MF_{u_0}^\mu[w]\ge \nu\|w\|_{\BV(\Om)}-L$ for all\/ $w\in\BV(\Om)$ with constants $\nu>0$ and $L\in\R$. Moreover, in the borderline case of ICs with $C=1$, $\MF_{u_0}^\mu$ is at least bounded from below on $\BV(\Om)$.
\end{prop}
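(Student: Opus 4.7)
The plan is to reduce to the anisotropic $\TV$-with-measure case, which is already handled in \cite{FS}, by exploiting the pointwise inequality $f\ge\finf$ from the normalization \eqref{assum:H4'}.

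I first introduce the auxiliary \ka$\finf$-version\kz{} functional
\[
  \MF_{u_0}^{\mu,\finf}[w]\coleq\int_\Om\finf(\,.\,,\D w)+\int_{\partial\Om}\finf(\,.\,,(w{-}u_0)\nu_\Om)\,\d\H^{N-1}+\llangle\mu_\pm\,;w^\mp\rrangle
  \qquad\text{for }w\in\BV(\Om)\,.
\]
Since $\finf$ is positively $1$-homogeneous and convex in $\xi$, continuous in $(x,\xi)$, and of linear growth \ka by Lemma \ref{lem:properties_rec_persp}\eqref{item:properties_rec_persp_i} under Assumption \ref{assum:f}\kz, the functional $\MF_{u_0}^{\mu,\finf}$ falls into the anisotropic $\TV$-with-measure framework of \cite{FS} with integrand $\finf$ in place of $f$. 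Given the $\finf$-IC for $(\mu_-,\mu_+)$ and the $\widetilde\finf$-IC for $(\mu_+,\mu_-)$ with constant $C\in[0,1)$, the coercivity argument underlying the existence theorem of \cite{FS} produces constants $\nu>0$ and $L\in\R$ with $\MF_{u_0}^{\mu,\finf}[w]\ge\nu\|w\|_{\BV(\Om)}-L$ for all $w\in\BV(\Om)$. In the borderline case $C=1$, the same chain of estimates still gives the $\BV$-test-function bound $\llangle\mu_\pm;w^\mp\rrangle\ge-\MF_{u_0}^{0,\finf}[w]-K$ coming from the ICs, which after cancellation with the bulk/boundary $\finf$-term of $\MF_{u_0}^{\mu,\finf}$ leaves only the additive constant; hence $\MF_{u_0}^{\mu,\finf}[w]\ge-L$.

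It then suffices to establish the pointwise domination $\MF_{u_0}^\mu\ge\MF_{u_0}^{\mu,\finf}$ on $\BV(\Om)$. Boundary and measure terms in the two functionals agree identically, so the difference reduces to $\int_\Om f(\,.\,,\D w)-\int_\Om\finf(\,.\,,\D w)$. Applying the decomposition formula \eqref{eq:decomposition_meas_func} for both functionals of measures, the singular parts \ka involving $\finf$ in either case\kz{} cancel out, and only the absolutely continuous contribution $\int_\Om\bigl(f(\,.\,,\nabla w){-}\finf(\,.\,,\nabla w)\bigr)\d\LN$ remains, which is non-negative by \eqref{assum:H4'}. Inserting the domination into the bounds of the previous paragraph yields $\MF_{u_0}^\mu[w]\ge\nu\|w\|_{\BV(\Om)}-L$ in the range $C\in[0,1)$ and $\MF_{u_0}^\mu[w]\ge-L$ in the borderline case $C=1$, as claimed.

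The main obstacle will be the first step, i.e.\@ confirming that the coercivity/boundedness-below statement for $\MF_{u_0}^{\mu,\finf}$ is indeed available from \cite{FS} in the exact generality needed here \ka especially in the borderline case $C=1$ without extra regularity of $u_0$\kz. Once this is in place, the reduction via pointwise domination through \eqref{assum:H4'} is essentially cost-free.
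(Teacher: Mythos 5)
Your proposal is correct and follows essentially the same route as the paper: the paper's first inequality is exactly your domination $\MF_{u_0}^\mu\ge\MF_{u_0}^{\mu,\finf}$ via \eqref{assum:H4'}, and the remaining coercivity estimate for the $1$-homogeneous functional is carried out there by combining the test-function form of the ICs from \cite[Remark 4.3]{FS} with the triangle inequality \eqref{anis_eq:subadd_phi}, the lower bound \eqref{eq:lin_growth_finf}, and Poincar\'e --- precisely the chain you delegate to \cite{FS}. Your concern about the borderline case is unfounded: with $C=1$ the same chain leaves the constant $L=\int_{\partial\Om}\finf(\,.\,,u_0\nu_\Om)\,\d\H^{N-1}$, which is finite for any $u_0\in\W^{1,1}(\R^N)$ by the trace theorem and linear growth of $\finf$.
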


\begin{proof}
  Both assumed ICs together yield by \cite[Remark 4.3]{FS} the inequality
  \[
    - \llangle\mu_\pm\,;w^\mp\rrangle
    \leq C \left(  |\D w|_{\finf}(\Om) + \int_{\partial \Om} \finf(\,.\,, w \nu_{\Om} )\,\d\H^{N-1}\right)
  \]
  for all $w \in \BV(\Om)$. We now use in turn assumption \eqref{assum:H4'}, the preceding inequality and the triangle inequality of \eqref{anis_eq:subadd_phi}, the lower bound in \eqref{eq:lin_growth_finf}, and Poincar\'e's inequality (cf.\@ \cite[eq.\@ (2.6)]{FS}). In this way, in the case $C<1$ we derive
  \[\begin{aligned}
    \MF_{u_0}^\mu[w]
    &\geq |\D w|_{\finf}(\Om) + \int_{\partial \Om} \finf \left( \,.\, , (w{-}u_0) \nu_\Om  \right) \,\d\H^{N-1} + \llangle\mu_\pm\,;w^\mp\rrangle \\ 
    &\geq (1{-}C) \left( |\D w|_{\finf}(\Om) + \int_{\partial \Om} \finf \left( \,.\, , w \nu_\Om  \right) \d\H^{N-1} \right)\, - \int_{\partial \Om} \finf \left( \,.\, ,  u_0 \nu_\Om  \right) \,\d\H^{N-1}\\
    &\geq (1{-}C)\alpha \left( |\D w|(\Om) + \int_{\partial \Om} |w|\,\d\H^{N-1} \right) 
    - L\\
    &\geq \nu \|w\|_{\BV(\Om)} - L
  \end{aligned}\]
  with $\nu > 0$, which depends on $C$, $\alpha$, and the Poincar\'e constant, and with $L \coleq \int_{\partial \Om} \finf \left( \,.\,,  u_0 \nu_\Om  \right) \,\d\H^{N-1}$. Clearly, in the case $C=1$ we get $\MF_{u_0}^\mu[w]\ge {-}L$ in an analogous (and in fact even slightly simpler) way.
\end{proof}

\begin{rem}[on the role of assumption \eqref{assum:H4} for coercivity]\label{rem:H4_for_coercivity}
  While the given proof of Proposition \ref{prop:inhom_CS_coercivity} exploits \eqref{assum:H4}, in fact its variant \eqref{assum:H4'}, a refined reasoning ensures coercivity in the case $C<1$ even without assuming \eqref{assum:H4} or \eqref{assum:H4'}. Indeed, one still has continuity of $\fpers$ (cf.\@ Lemma \ref{lem:properties_rec_persp}\eqref{item:properties_rec_persp_iv}) and as consequence obtains $|f(x,\xi)-\finf(x,\xi)|\le(|\xi|+1)\omega(|\xi|)$ for all $(x,\xi)\in\R^N\times\R^N$ with some $\omega\colon[0,\infty)\to[0,\infty)$ such that $\lim_{s\to\infty}\omega(s)=0$ holds. Taking into account \eqref{eq:lin_growth_finf} one deduces $f(x,\xi)\ge C_\ast\finf(x,\xi)-M$ for all $(x,\xi)\in\R^N\times\R^N$ with any fixed $C_\ast\in(C,1)$ and some corresponding $M\in\R$, and this last observation then suffices to check coercivity by a reasoning analogous to the one above.

  In contrast, the claim on boundedness of $\MF_{u_0}^\mu$ from below in the limit case $C=1$ does inevitably depend on \eqref{assum:H4}. Indeed, all our assumptions except \eqref{assum:H4} are fulfilled in $N=2$ dimensions on $\Omega\coleq\B_2(0)\Subset\R^2$ for $f(x,\xi)\coleq|\xi|+1-\sqrt{|\xi|+1}$ with $\finf(x,\xi)=|\xi|$ and for $\mu_+\colequiv0$, $\mu_-\coleq H\mathcal{L}^2\resmes\B_2(0)$ with $H(x)\coleq\frac1{|x|}$; compare \cite[Section 5]{FS} for ways of verifying the limit IC for $\mu_-$. Still, for $v_k\coleq k^2\max\{\min\{w_k,1\},0\}$ with $w_k(x)\coleq1{-}k(|x|{-}1)$, it is a matter of computation checking that $|\D v_k|(\B_2(0))=\pi(2k^2{+}k)=\int_{\B_2(0)}v_k\,\d\mu_-$ and $\MF_0^\mu[v_k]=\int_{\B_2(0)}\big[1{-}\sqrt{|\nabla v_k|{+}1}\big]\,\d\mathcal{L}^2=\pi\big[1{-}\sqrt{k^3{+}1}\big]\big(\frac2k{+}\frac1{k^2}\big)$ hold. This yields $\lim_{k\to\infty}\MF_0^\mu[v_k]={-}\infty$ and confirms that in this exemplary case $\MF_0^\mu$ is unbounded from below.
\end{rem}

With suitable lower semicontinuity and coercivity at hand, the proof of existence is now a routine matter:

\begin{proof}[Proof of Theorem \ref{thm:exist_inhom}]
  We consider a minimizing sequence $(u_k)_k$ for $\MF_{u_0}^\mu$ in $\BV(\Om)$. Since we assume $C<1$, the coercivity property of Proposition \ref{prop:inhom_CS_coercivity} implies boundedness of $(u_k)_k$ in $\BV(\Om)$, and a subsequence $\big(u_{k_\ell}\big)_\ell$ converges in $\L^1(\Om)$ to some $u \in \BV(\Om)$. By the lower semicontinuity result of Theorem \ref{thm:LSC} we conclude
  \[
    \MF_{u_0}^\mu[u]
    \le \liminf_{k \to \infty} \MF_{u_0}^\mu[u_k]
    = \inf_{w \in \BV(\Om)} \MF_{u_0}^\mu[w]\,.
  \]
  Thus, $u$ is a minimizer of $\MF_{u_0}^\mu$ in $\BV(\Om)$.
\end{proof}

\section{An example of non-existence in case of the borderline IC}
  \label{sec:counterexample}

In this section we prove the claims made in Example \ref{ex:anis_area} by a reasoning in parts analogous to \cite[Section 5.3]{FS}. We start with an auxiliary lemma which verifies a suitable anisotropic IC.

\begin{lem}\label{lem:IC-p_circ}
  We assume that $\p \colon \R^2 \to {[0,\infty)}$ is positively 1-homogeneous and convex and that it satisfies $\p(\xi) > 0$  for all $\xi \in \R^2 \setminus \{0\}$. Then we have
  \begin{equation}\label{eq:IC-p_circ}
    \int_A\frac1{\p^{\circ}(x)}\dx \leq 
    \P_{\widetilde\p}(A)
  \end{equation}
  for every $A\subseteq\R^2$ such that $|A|<\infty$. Moreover, equality occurs in \eqref{eq:IC-p_circ} if and only if\/ $|A\,\triangle\,\{\p^\circ<r\}|=0$ holds for some $r\in{[0,\infty)}$.
\end{lem}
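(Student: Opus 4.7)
My plan is to reduce \eqref{eq:IC-p_circ} to the anisotropic isoperimetric inequality (Theorem \ref{thm:anis_isop_ineq}) by comparing $A$ with the \emph{Wulff shape} $W_r\coleq\{\p^\circ<r\}$ of the same Lebesgue measure, and to settle the equality case by a direct comparison of the integrands on the symmetric difference.

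The first step would be to verify that \eqref{eq:IC-p_circ} holds as equality on every Wulff shape, namely
\[
  \int_{W_r}\frac{1}{\p^\circ}\dx \;=\; \P_{\widetilde\p}(W_r) \;=\; 2r|W_1|
  \qq\text{for every }r>0\,.
\]
Both equalities follow from the coarea formula applied to $\p^\circ$, combined with the identity
$\widetilde\p(-\nabla\p^\circ/|\nabla\p^\circ|)=\p(\nabla\p^\circ/|\nabla\p^\circ|)=1/|\nabla\p^\circ|$ at almost every point (provided by Lemma \ref{lem:rel_p_gradient}), which identifies the $\widetilde\p$-surface density on the level set $\{\p^\circ=s\}$ as $1/|\nabla\p^\circ|$. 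The homogeneity scaling $|W_s|=s^2|W_1|$, which pins down $\tfrac{\d}{\d s}|W_s|$, then yields both equalities at once.

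Next, for general measurable $A\subseteq\R^2$ with $0<|A|<\infty$, I would pick the unique $r>0$ such that $|A|=|W_r|$ and chain
\[
  \int_A\frac{1}{\p^\circ}\dx
  \;\le\; \int_{W_r}\frac{1}{\p^\circ}\dx
  \;=\; \P_{\widetilde\p}(W_r)
  \;\le\; \P_{\widetilde\p}(A)\,.
\]
The middle equality is step one, the right-hand inequality is Theorem \ref{thm:anis_isop_ineq}, and the left-hand inequality is elementary: since $\p^\circ\ge r$ on $A\setminus W_r$ and $\p^\circ<r$ on $W_r\setminus A$, one has $1/\p^\circ\le 1/r$ on the first set and $1/\p^\circ>1/r$ on the second, while $|A\setminus W_r|=|W_r\setminus A|$; subtracting the common part $A\cap W_r$ from both sides then yields the estimate. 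For the equality case, I note that $\{\p^\circ=r\}$ is $\H^1$-rectifiable and hence Lebesgue-negligible in $\R^2$, so the left-hand inequality is in fact \emph{strict} as soon as $|A\triangle W_r|>0$; thus equality in \eqref{eq:IC-p_circ} forces $|A\triangle W_r|=0$, and conversely. The trivial case $|A|=0$ corresponds to $r=0$.

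I do not expect any major obstacle; the one technical point requiring care is keeping the $\widetilde\p$-vs-$\p$ sign convention consistent (the \emph{inward} normal at $\partial W_r$ is $-\nabla\p^\circ/|\nabla\p^\circ|$, so Lemma \ref{lem:rel_p_gradient} is needed with the appropriate sign). Finiteness of $\P_{\widetilde\p}(A)$ need not be assumed: if it is infinite the inequality is trivial, and otherwise the bound $\int_A 1/\p^\circ\le\int_{W_r}1/\p^\circ<\infty$ is automatic from local integrability of $1/\p^\circ$ on $\R^2$ (a consequence of the $|\cdot|$-comparability of $\p^\circ$).
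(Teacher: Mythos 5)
Your proof is correct and takes essentially the same route as the paper: the coarea formula combined with Lemma~\ref{lem:rel_p_gradient} yields the equality on Wulff shapes, an elementary rearrangement reduces a general $A$ to the Wulff shape of equal volume, and Theorem~\ref{thm:anis_isop_ineq} closes the chain, with the same characterization of equality cases. The only cosmetic difference is in organizing the Wulff-shape computation: you identify the $\widetilde\p$-surface density as $1/|\nabla\p^\circ|$ and invoke the scaling $|W_s|=s^2|W_1|$, whereas the paper writes $\int_{\{\p^\circ=t\}}\p(\nu_{\p^\circ})\,\d\H^1=\frac{t}{r}\,\P_{\widetilde\p}(\{\p^\circ<r\})$ and integrates in $t$ --- two bookkeepings of the same identity.
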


\begin{proof}
  We first verify the auxiliary equality
  \begin{equation}\label{claim_per_pcirc}
    \int_{\left\{ \p^\circ < r  \right\}}\frac1{\p^{\circ}(x)}\dx\,
    = \P_{\widetilde\p}(\left\{ \p^\circ < r  \right\})
    \qq\text{for all }r\in[0,\infty)\,.
  \end{equation}
  Indeed, taking into account the homogeneity of $\p$, we can recast the standard coarea formula (see e.\@g.\@ \cite[Section 3.4.3]{EG92}) in form of the anisotropic coarea formula
  \[	
    \int_U g\,\p(\nabla H)\dx
    =\int_{-\infty}^\infty\int_{U\cap\{H=t\}}g\,\p(\nu_H)\,\d\H^1\dt
  \]
  for any Lipschitz function $H\colon U\to\R$ such that $\nabla H\neq0$ \ae{} in $U$ and any Borel function $g\colon U\to[0,\infty)$ on open $U \subseteq \R^2$. Here, $\nu_H\coleq\frac{\nabla H}{|\nabla H|}$ is defined \ae{} in $U$ and is consistent with the notation of Definition \ref{defi:anis_TV} for the Radon-Nikod\'ym derivative. We now combine the \ae{} equality $\p(\nabla \p^\circ)=1$ from Lemma \ref{lem:rel_p_gradient} and the preceding formula with $H=\p^\circ$ and $g=1/\p^\circ$ on $U = \{\p^\circ<r\}$. In this way we infer
  \begin{equation}\label{eq:IC-p_circ-1}\begin{aligned}
    \int_{\{\p^\circ<r\}}\frac1{\p^\circ}\dx 
    = \int_{\{\p^\circ<r\}} \frac1{\p^\circ} \, \p(\nabla\p^\circ) \dx
    = \int_0^r \frac1t \int_{\{\p^\circ=t\}} \p(\nu_{\p^\circ}) \,\d\H^1 \dt\,.
  \end{aligned}\end{equation}
  Next, taking into account convexity and homogeneity of $\p^\circ$, we deduce that $\{\p^\circ<t\}$ is a bounded convex set with $\partial^\ast\{\p^\circ<t\}=\partial\{\p^\circ<t\}=\{\p^\circ=t\}$ for all $t>0$, and moreover we record that $\nu_{\p^\circ}=-\nu_{\{\p^\circ<t\}}$ holds $\H^1$-\ae{} on $\{\p^\circ=t\}$ for \ae{} $t>0$ at least; compare e.\@g.\@ \cite[eqn (2.19)]{FS} for the last property. These observations together with the 1-homogeneity of $\p^\circ$ allow to recognize
  \begin{equation}\label{eq:IC-p_circ-2}
    \int_{\{\p^\circ=t\}} \p(\nu_{\p^\circ}) \,\d\H^1
    =\P_{\widetilde\p}(\{\p^\circ<t\})
    =\P_{\widetilde\p}\Big(\frac tr\{\p^\circ<r\}\Big)
    =\frac tr\,\P_{\widetilde\p}(\{\p^\circ<r\})
    \qq\text{for \ae{} }t>0\,.
  \end{equation}
  The combination of the previous chains of equalities then straightforwardly yields the auxiliary claim \eqref{claim_per_pcirc}. 

  \medskip

  Now we consider $A \subseteq \R^2$ such that $0<|A|<\infty$, and we fix $r\in(0,\infty)$ such that $|\{\p^\circ<r\}|=|A|$. In view of $|A\setminus\{\p^\circ<r\}|=|\{\p^\circ<r\}\setminus A|$ we infer
  \[\begin{aligned}
    \int_A \frac1{\p^{\circ}} \dx
    &\leq \frac1r \, |A\cap\{\p^\circ\ge r\}|
      + \int_{A\cap\{\p^\circ<r\}} \frac1{\p^{\circ}} \dx \\
    & = \frac1r \, |\{\p^\circ<r\}\setminus A|
      \, + \int_{A\cap\{\p^\circ<r\}} \frac1{\p^{\circ}} \dx
    \leq \int_{\{\p^\circ<r\}} \frac1{\p^{\circ}}\dx\,,
  \end{aligned}\]
  where the condition for turning both inequalities into equalities is precisely $|A\,\triangle\,\{\p^\circ<r\}|=0$. Moreover, the equality \eqref{claim_per_pcirc} and the anisotropic isoperimetric inequality of Theorem \ref{thm:anis_isop_ineq} yield
  \[
    \int_{\{\p^\circ<r\}} \frac1{\p^{\circ}}\dx
    = \P_{\widetilde\p}(\{\p^\circ<r\})
    \leq \P_{\widetilde\p}(A)
  \]
  with equality in particular in case $|A\,\triangle\,\{\p^\circ<r\}|=0$. The combination of these observations then implies both the claimed inequality \eqref{eq:IC-p_circ} and the characterization of its equality cases.
\end{proof}

\begin{rem}\label{rem:IC-p_circ_mirr}
  Replacing $\p$ with the mirrored integrand $\widetilde\p$, from Lemma \ref{lem:IC-p_circ} we obtain also
  \[
    \int_A\frac1{{\widetilde\p}^{\circ}(x)}\dx
    \leq \P_\p(A)\,,
  \]
  under the same assumptions and with equality precisely in case $|A\,\triangle\,\{{\widetilde\p}^\circ<r\}|=0$.
\end{rem}

\begin{proof}[Proof of the claims from Example \ref{ex:anis_area}] 
  By Lemma \ref{lem:IC-p_circ} in the modified version of Remark \ref{rem:IC-p_circ_mirr}, the measure $\mu_-=(1/{\widetilde\p}^\circ)\mathcal{L}^2$ satisfies the $\p$-IC in $\R^2$ with the borderline constant $1$. In view of \cite[Theorem 4.2]{FS} we equivalently recast this IC as
  \begin{equation} \label{eq:IC_ex_anis_area}
    \int_{\R^2} \frac{w}{{\widetilde\p}^{\circ}}\dx
    \leq |\D w|_\p(\R^2) \quad \text{ for all } w \in \BV_\c(\R^2)\,.
  \end{equation}
  Furthermore, we record that $f$ given by $f(\xi)\coleq\sqrt{1+\p(\xi)^2}$ falls under Assumption \ref{assum:f} with $\finf(\xi)=\p(\xi)$ and $\fpers(t,\xi)=\sqrt{t^2+\p(\xi)^2}$. For the corresponding functional of measures, we observe the \emph{strict} inequality
  \[
    \int_\Om\sqrt{1+\p^2(\D w)}>|\D w|_\p(\Om)
    \qq\qq\text{for all }w\in\BV(\Om)\,,
  \]
  and consequently via the IC in \eqref{eq:IC_ex_anis_area} we find 
  \[
    \A_\p[w]
    =\int_{\R^2}\sqrt{1+\p(\D\ol{w})^2}
      -\int_{\R^2}\frac{\ol{w}}{{\widetilde\p}^{\circ}}\dx
    >|\D\ol{w}|_\p(\R^2)
      -\int_{\R^2}\frac{\ol{w}}{{\widetilde\p}^{\circ}}\dx 
    \geq 0
  \] 
  for all $w \in \BV(\Om)$, where $\ol{w}$ denotes the extension of $w$ to all of $\R^2$ with value $0$ outside $\Om$. Recalling $\Om=\{{\widetilde\p}^\circ<1\}$, we now define a sequence of functions $u_k\in\W^{1,1}_0(\Om)$ by setting\footnote{Alternatively, the example can be built with $u_k(x)\coleq k g({\widetilde\p}^\circ(x))$ for any fixed decreasing $\C^1$ function $g \colon [0,1] \to [0,\infty)$ such that $g(1)=0$. The extremality property of $\ol{u_k}$ can then be checked by a coarea argument.}
  \[
    u_k(x)\coleq k(1-{\widetilde\p}^{\circ}(x))\,.
  \]
  Moreover, since $\Om$ is the ${\widetilde\p}^\circ$-unit ball in $\R^2$, in analogy with the isotropic case (e.\@g.\@ by a computation analogous to the one in \eqref{eq:IC-p_circ-1} and \eqref{eq:IC-p_circ-2}) we get $\P_\p(\Om)=2|\Om|$. We then combine the definition of $u_1$, the equality case of Remark \ref{rem:IC-p_circ_mirr}, the preceding observation, and Lemma \ref{lem:rel_p_gradient} in computing
  \[
    \int_\Om\frac{u_1}{{\widetilde\p}^\circ}\dx
    =\int_\Om\frac1{{\widetilde\p}^\circ}\dx-|\Om|
    =\P_\p(\Om)-|\Om|
    =|\Om|
    =\int_\Om\widetilde\p(\nabla{\widetilde\p}^\circ)\dx
    =\int_\Om\p(\nabla u_1)\dx\,.
  \]
  Thus, we conclude that $\ol{u_1}$ and in fact all $\ol{u_k}$ are extremals for the IC \eqref{eq:IC_ex_anis_area}. In turn, this extremality property and the homogeneity of $\p$ yield
  \[\begin{aligned}
    \A_\p[u_k] 
    & = \int_\Om \sqrt{1+\p(k\nabla u_1)^2}\dx
      - k \int_\Om \frac{u_1}{{\widetilde\p}^\circ}\dx\\
    & = \int_\Om \big( \sqrt{1+k^2\p(\nabla u_1)^2} -k\p(\nabla u_1) \big)\dx  \\
    & = \int_\Om \frac1{\sqrt{1+k^2\p(\nabla u_1)^2}+k\p(\nabla u_1)} \dx\\
    & \leq  \int_\Om \frac1{1+k\p(\nabla u_1)} \dx \xrightarrow[k \to \infty]{} 0\,,
  \end{aligned}\]
  where the final convergence results from the dominated convergence theorem and crucially exploits the observation that $\nabla u_1 = -\nabla{\widetilde\p}^\circ\neq0$ and hence $\p(\nabla u_1)\neq0$ hold \ae{} in $\Om$. Collecting the previous findings, we have shown
  \[
    \inf_{\BV(\Om)}\A_\p=0<\A_\p[w]
    \qq\text{for all }w \in \BV(\Om)\,.
  \]
  Therefore, the minimum of $\A_\p$ is not attained.
\end{proof}

\section{Construction of recovery sequences}\label{sec:rec_seq}

In this section, we work out the proof of Theorem \ref{thm:rec_seq}. The implementation follows \cite[Section 8]{FS} and merely requires comparably minor adaptations. So, we here give an account on the general strategy of proof, restate relevant auxiliary results in the present framework, and indicate the necessary adaptations. For full details of some technical procedures, however, we still refer the reader to \cite[Section 8]{FS}.

Before going into the details, we recall once more the general assumption that $\Om$ is a bounded open set with Lipschitz boundary in $\R^N$. In addition, as in Section \ref{sec:LSC_exist}, we assume also here that $f$ is defined on $\R^N\times\R^N$ and satisfies Assumption \ref{assum:f} to the extent relevant for Theorem \ref{thm:rec_seq} (i.\@e.\@ lower bound in \eqref{assum:H1} weakened to non-negativity and \eqref{assum:H4} dropped) with $\Om=\R^N$.

This said, in analogy with \cite[Proposition 8.1]{FS} we initially record that a first type of recovery sequence --- indeed a sequence in $\W^{1,1}(\Om)$, but not yet in $\W^{1,1}_{u_0}(\Om)$ --- is straightforwardly available from \cite[Proposition 4.4]{FS} or in slightly different framework from \cite[Lemma 4.1]{LeoCom24_v5}.

\begin{prop}[recovery sequences with unconstrained boundary values]\label{prop:free_rec_seq}
  We impose on $f$ the above assumptions and consider admissible measures $\mu_\pm$ on $\Om$ such that $\mu_+$ and $\mu_-$ are singular to each other. Then, for every $u\in\BV(\Om)$, there exists a sequence $(w_k)_k$ in $\W^{1,1}(\Om)$ such that $(w_k)_k$ converges to $u$ area-strictly in $\BV(\Om)$ with
  \[
    \lim_{k\to\infty}\MF_{u_0}^\mu[w_k]
    =\MF_{u_0}^\mu[u]
  \]
  for every $u_0\in\W^{1,1}(\R^N)$. More specifically, for the single terms, we achieve
  \begin{gather}
    \lim_{k\to\infty}\int_\Om f(\,.\,,\nabla w_k)\dx
    =\int_\Om f(\,.\,,\D u)\,,
      \label{eq:free_rec_seq_1}\\
    \lim_{k\to\infty}\int_{\partial\Om}\finf(\,.\,,(w_k{-}u_0)\nu_\Om)\,\d\H^{N-1}
    =\int_{\partial\Om}\finf(\,.\,,(u{-}u_0)\nu_\Om)\,\d\H^{N-1}\,,
      \label{eq:free_rec_seq_2}\\
    \lim_{k\to\infty}\int_\Om w_k^\ast\,\d\mu_-=\int_\Om u^+\,\d\mu_-
    \qq\qq\text{and}\qq\qq
    \lim_{k\to\infty}\int_\Om w_k^\ast\,\d\mu_+=\int_\Om u^-\,\d\mu_+\,.
      \label{eq:free_rec_seq_3}
  \end{gather}
\end{prop}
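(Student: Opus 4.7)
The plan is to construct $(w_k)_k$ by invoking the smooth approximation result of \cite[Proposition 4.4]{FS} (or, in the area-type setting, \cite[Lemma 4.1]{LeoCom24_v5}). Under admissibility of mutually singular $\mu_\pm$, that result furnishes a sequence $(w_k)_k$ in $\W^{1,1}(\Om) \cap \C^\infty(\Om)$ converging to $u$ area-strictly in $\BV(\Om)$ and simultaneously realizing the asymmetric limits \eqref{eq:free_rec_seq_3}. The underlying construction splits $\Om$ into Borel sets $\Om_+, \Om_-$ with $\mu_+(\Om_-) = 0 = \mu_-(\Om_+)$ (possible since $\mu_+ \perp \mu_-$) and mollifies $u$ with direction-dependent offsets so that the resulting smooth functions approach $u^+$ at the jump points inside $\Om_-$ and $u^-$ at the jump points inside $\Om_+$; the $\H^{N-1}$-rectifiability of $\mathrm{J}_u$ and the non-charging of $\H^{N-1}$-null sets in the definition of admissible measures guarantee that this piecing together is compatible with area-strict approximation. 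Since $w_k \in \W^{1,1}(\Om)$ entails $w_k^\ast = w_k$ $\H^{N-1}$-\ae{}, hence $\mu_\pm$-\ae{}, the convergences in \eqref{eq:free_rec_seq_3} are part of the output.

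Given such $(w_k)_k$, the limit \eqref{eq:free_rec_seq_1} is immediate from the non-homogeneous Reshetnyak continuity theorem (Theorem \ref{thm:Resh_cont_inhom_BV}), which applies thanks to the continuity of $f$ and $\finf$ in Assumption \ref{assum:f} and the area-strict convergence $w_k \to u$ in $\BV(\Om)$.

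For the boundary limit \eqref{eq:free_rec_seq_2}, I would fix an open $\pOm \subseteq \R^N$ with $\Om \Subset \pOm$ and $|\pOm|<\infty$ and continuously extend $f$ to $\pOm \times \R^N$ with preserved linear growth. Setting $\ol{w_k} \coleq w_k \1_\Om + u_0 \1_{\R^N \setminus \ol\Om}$ and $\ol u \coleq u \1_\Om + u_0 \1_{\R^N \setminus \ol\Om}$, I would combine the area-strict convergence of $w_k$ in $\BV(\Om)$ with $\L^1$-convergence of the interior traces on $\partial\Om$ (which comes along with the construction) and use the decomposition of $\D\ol{w_k}$ from \cite[Theorem 3.84]{AFP00} to upgrade to area-strict convergence of $\ol{w_k}$ to $\ol u$ in $\BV(\pOm)$. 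A further application of Reshetnyak continuity on $\pOm$ then yields $\int_\pOm f(\,.\,, \D \ol{w_k}) \to \int_\pOm f(\,.\,, \D \ol u)$, and subtracting \eqref{eq:free_rec_seq_1} together with the $k$-independent remainder $\int_{\pOm \setminus \ol\Om} f(\,.\,, \nabla u_0) \d\LN$ via the identity \eqref{eq:F-enlarge-domain} produces \eqref{eq:free_rec_seq_2}. Summing \eqref{eq:free_rec_seq_1}--\eqref{eq:free_rec_seq_3} gives the total-energy limit $\lim_k \MF_{u_0}^\mu[w_k] = \MF_{u_0}^\mu[u]$.

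The main obstacle lies entirely in the very construction of the asymmetric-limit sequence, where area-strict convergence must be preserved while the two opposite approximation conventions at the jump set are enforced; this is the content of the cited approximation results in \cite{FS} and \cite{LeoCom24_v5} and may be reused verbatim here since the integrand $f$ plays no role at that stage. Once such $(w_k)_k$ is in hand, the rest is a routine combination of Reshetnyak continuity with the $\BV$ trace extension.
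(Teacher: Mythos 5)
Your proposal is correct and follows essentially the same approach as the paper: both construct $(w_k)_k$ from \cite[Proposition 4.4]{FS} together with the area-strict upgrade of \cite[Remark 4.5]{FS} (which you use implicitly), thereby realizing \eqref{eq:free_rec_seq_3}, and both obtain \eqref{eq:free_rec_seq_1} from Theorem \ref{thm:Resh_cont_inhom_BV}. The only variation concerns \eqref{eq:free_rec_seq_2}, which the paper deduces directly from $\L^1(\partial\Om\,;\H^{N-1})$-convergence of traces via \cite[Theorem 3.88]{AFP00} --- also the actual source of the trace convergence you describe as coming with the construction --- whereas you take a slightly longer but equally valid detour through area-strict convergence of the extensions $\ol{w_k}$ in $\BV(\pOm)$ and a second Reshetnyak application via \eqref{eq:F-enlarge-domain}.
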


\begin{proof}
  We rely on the approximation result of \cite[Proposition 4.4]{FS} and the observation of \cite[Remark 4.5]{FS} that this result remains valid even with \emph{area-}strict convergence. This gives existence of a sequence $(w_k)_k$ in $\W^{1,1}(\Om)$ such that $(w_k)_k$ converges to $u$ area-strictly in $\BV(\Om)$ with \eqref{eq:free_rec_seq_3}. Then Theorem \ref{thm:Resh_cont_inhom_BV} yields \eqref{eq:free_rec_seq_1}, and via \cite[Theorem 3.88]{AFP00} we deduce first convergence of the traces in $\L^1(\partial\Om\,;\H^{N-1})$ and then \eqref{eq:free_rec_seq_2}. Finally, the combination of \eqref{eq:free_rec_seq_1}, \eqref{eq:free_rec_seq_2}, \eqref{eq:free_rec_seq_3} entails $\lim_{k\to\infty}\MF_{u_0}^\mu[w_k]=\MF_{u_0}^\mu[u]$.
\end{proof}

The subsequent auxiliary lemma resembles \cite[Lemma 8.2]{FS} and allows for adjusting the boundary values of a recovery sequence at least in case the prescribed boundary datum $u_0$ is in $\L^\infty(\R^N)$. The precise statement employs once more the notation $\ol{u}\coleq\1_\Om u+\1_{\R^N\setminus\ol{\Om}}u_0\in\BV(\R^N)$ for the extension of $u\in\BV(\Om)$ via the values of the given datum $u_0\in\W^{1,1}(\R^N)$.

\begin{lem}[from unconstrained boundary values to $\L^\infty$ boundary values] \label{lem:recovery-W-Wu0}
  We again impose on $f$ the above assumptions and consider admissible measures $\mu_\pm$ on $\Om$. Then, for every $w\in\W^{1,1}(\Om)$ and every $u_0 \in \W^{1,1}(\R^N)\cap\L^\infty(\R^N)$, there exists a sequence $(v_k)_k$ in $\W^{1,1}_{u_0}(\Om)$, thus $\MF_{u_0}^{0}[v_k]=\mathrm{F}^{0}[v_k]=\int_\Om f(\,.\,,\nabla v_k)\dx$, such that $(\ol{v_k})_k$ converges to $\ol{w}$ area-strictly in $\BV(\pOm)$, on any open $\pOm\subseteq\R^N$ such that $\Om\Subset\pOm$, $|\pOm|<\infty$, with
  \[
    \lim_{k\to\infty}\mathrm{F}^\mu[v_k]=\MF_{u_0}^\mu[w]\,.
  \]
  More specifically, for the single terms, we achieve
  \begin{gather*}
    \lim_{k \to \infty}\int_\Om f(\,.\,,\nabla v_k)\dx=\int_\Om f(\,.\,,\nabla w)\dx+\int_{\partial\Om}\finf(\,.\,,(w{-}u_0)\nu_\Om)\,\d\H^{N-1}\,,\\
    \lim_{k\to\infty}\int_\Om v_k^\ast\,\d\mu_-=\int_\Om w^\ast\,\d\mu_-
    \qq\qq\text{and}\qq\qq
    \int_\Om v_k^\ast\,\d\mu_+=\int_\Om w^\ast\,\d\mu_+\,.
  \end{gather*}
\end{lem}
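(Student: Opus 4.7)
The plan is to construct $v_k$ by inserting a boundary-strip interpolation, so that $v_k$ equals $w$ well inside $\Om$ and transitions to the prescribed trace $u_0$ within a layer of width $1/k$ near $\partial\Om$, closely following \cite[Lemma 8.2]{FS}. Since $\partial\Om$ is Lipschitz, I would first fix (via partition of unity and local charts) a bi-Lipschitz tubular parametrization $\Psi\colon\partial\Om\times[0,\delta]\to\ol{U_\delta}$ of an interior neighborhood $U_\delta\Subset\ol{\Om}$ of $\partial\Om$, with $\Psi(\cdot,0)=\mathrm{id}_{\partial\Om}$ and $\partial_t\Psi(y,0)=\nu_\Om(y)$. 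Fixing a $\C^1$ monotone transition function $\lambda\colon[0,1]\to[0,1]$ with $\lambda(0)=1$ and $\lambda(1)=0$, for small $h>0$ I would set
\begin{equation*}
  v_h(x)\coleq\begin{cases}w(x)&\text{if }x\in\Om\setminus U_h,\\ \lambda(t/h)\,u_0(y)+(1{-}\lambda(t/h))\,w(\Psi(y,h))&\text{if }x=\Psi(y,t)\in U_h,\end{cases}
\end{equation*}
and take $v_k\coleq v_{1/k}\in\W^{1,1}_{u_0}(\Om)$. The essential use of $u_0\in\L^\infty$ is to keep $v_k$ uniformly bounded on the strip.

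The decisive verification is the energy identity in the strip. In $\Psi$-coordinates, the inward derivative is $\partial_t(v_h\circ\Psi)(y,t)=(\lambda'(t/h)/h)(u_0(y)-w(\Psi(y,h)))$, while tangential derivatives of $v_h\circ\Psi$ remain bounded as $h\to0$. Hence, at each $\Psi(y,t)\in U_h$, $\nabla v_h$ is to leading order aligned with $\nu_\Om(y)$ and of magnitude $\sim1/h$. Using the defining limit $f(x,s\xi)/s\to\finf(x,\xi)$ as $s\to\infty$ (which converges uniformly on compacts thanks to continuity of $f$ and $\finf$ from Assumption \ref{assum:f}), together with a Fubini-style computation and $\int_0^1|\lambda'|\,\d s=1$, I would obtain
\begin{equation*}
  \lim_{h\to0}\int_{U_h}f(\,.\,,\nabla v_h)\dx=\int_{\partial\Om}\finf\bigl(\,.\,,(w{-}u_0)\nu_\Om\bigr)\,\d\H^{N-1}.
\end{equation*}
Combined with the trivial $\int_{\Om\setminus U_h}f(\,.\,,\nabla w)\dx\to\int_\Om f(\,.\,,\nabla w)\dx$, this establishes the first claim. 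Specializing to the area integrand $f(x,\xi)=\sqrt{1+|\xi|^2}$ gives exactly the convergence $\int_\Om\sqrt{1+|\nabla v_k|^2}\dx\to\int_\Om\sqrt{1+|\nabla w|^2}\dx+\int_{\partial\Om}|w-u_0|\,\d\H^{N-1}$, which is the area-strict convergence of $\ol{v_k}$ to $\ol{w}$ in $\BV(\pOm)$: indeed $\ol{w}$ carries the singular part $(w{-}u_0)\nu_\Om\H^{N-1}\resmes\partial\Om$ while $\ol{v_k}$ carries none, and the strip absorbs this mass in the limit.

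For the measure terms, the construction gives $v_k=w$ on $\Om\setminus U_{1/k}$, so the difference reduces to $\int_{U_{1/k}}(v_k^\ast-w^\ast)\,\d\mu_\pm$. Admissibility of $\mu_\pm$ (Definition \ref{defi:mu}) gives $w^\ast\in\L^1(\Om;\mu_\pm)$, while in the strip $|v_k^\ast|$ is dominated by $\|u_0\|_{\L^\infty}+|w(\Psi(\cdot,1/k))|^\ast$; combining this with $\mu_\pm(U_{1/k})\to0$ (as $\mu_\pm$ is finite on $\Om$ and $\bigcap_h U_h=\emptyset$) and dominated convergence, both pieces vanish in the limit. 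The main obstacle I expect lies in carrying out the strip computation above rigorously: ensuring uniform convergence of $f(x,s\xi)/s$ to $\finf(x,\xi)$ along the relevant scales, controlling the tangential derivatives and the Jacobian corrections associated with the merely bi-Lipschitz $\Psi$, and handling the trace-type manipulations when transferring $w\circ\Psi(\cdot,h)$ to the surface trace of $w$ on $\partial\Om$ as $h\to0^+$. These details parallel \cite[Lemma 8.2]{FS} but require a more flexible accounting since $f$ is not homogeneous in $\xi$ and the continuity of $\finf$ (rather than $\finf=f$) becomes the crucial ingredient.
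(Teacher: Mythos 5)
Your approach is genuinely different from the paper's and would, if it worked, be quite instructive: instead of invoking an abstract area-strict approximation result and then truncating (which is what the paper does, following \cite[Lemma 8.2]{FS}, using \cite[Lemma B.2]{Bildhauer03} or \cite[Theorem 1.2]{Schmidt15} to get $u_\ell\in\W^{1,1}_{u_0}(\Om)$ with $\ol{u_\ell}\to\ol{w}$ area-strictly, then truncating $(u_\ell)^M$, passing to the limit via Reshetnyak continuity for the energy terms and dominated convergence for the measure terms, and finally diagonalizing), you build the approximants by hand via a boundary-strip interpolation in a tubular neighborhood. The explicit construction is essentially what underlies the abstract approximation theorem for the energy part, so the comparison is a fair one; your route is more elementary in spirit but requires redoing in detail what \cite{Bildhauer03,Schmidt15} already provide.

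However, there is a genuine gap that makes the proposal incomplete as written, and it concerns exactly the step where the two approaches diverge. You assert that ``the essential use of $u_0\in\L^\infty$ is to keep $v_k$ uniformly bounded on the strip,'' but this is false: on the strip your interpolation is $\lambda(t/h)u_0(y)+(1{-}\lambda(t/h))w(\Psi(y,h))$, and $w\in\W^{1,1}(\Om)$ is \emph{not} assumed bounded, so $w(\Psi(\cdot,h))$ --- and hence $v_k$ --- is not uniformly bounded in the strip. This breaks both of your decisive limiting arguments. For the measure terms, the claim that $\int_{U_{1/k}}v_k^\ast\,\d\mu_\pm$ vanishes just because $\mu_\pm(U_{1/k})\to0$ fails without a uniform integrable bound on the integrand, and $\|u_0\|_{\L^\infty}+|w(\Psi(\cdot,1/k))|^\ast$ is not such a bound since it depends on $k$ and has no reason to be dominated by a fixed $\L^1(\mu_\pm)$ function (an admissible $\mu_\pm$ can concentrate near $\partial\Om$, where the pushed-forward values of $w$ are uncontrolled). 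For the energy term, the passage from $f$ to $\finf$ in the strip relies on the locally uniform convergence $f(x,s\xi)/s\to\finf(x,\xi)$, which requires $\xi$ to range over a bounded set; again this fails for unbounded $w$, and at best you would have to invoke an additional equi-integrability argument that is not sketched. The paper's truncation of the approximants at levels $M\ge\|u_0\|_{\L^\infty}$ is precisely the device that fixes both problems (and it is also the actual reason $u_0\in\L^\infty$ is needed: truncating at $M\ge\|u_0\|_{\L^\infty}$ leaves the boundary trace $u_0$ unchanged, so membership in $\W^{1,1}_{u_0}(\Om)$ is preserved). To salvage your construction you would need to interpolate not against $w(\Psi(\cdot,h))$ but against a truncation of it, and then run a diagonal argument over the truncation level, which essentially reproduces the paper's structure.
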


\begin{proof}
  We follow the reasoning previously developed for \cite[Lemma 8.2]{FS}. In rough summary, this reasoning involves: a result on strict approximation of the arbitrary $w\in\W^{1,1}(\Om)$ by functions $u_\ell\in\W^{1,1}_{u_0}(\Om)$ with prescribed boundary datum $u_0$, the usage of truncations $(u_\ell)^M$ of $u_\ell$ at levels $M\ge\|u_0\|_{\L^\infty(\R^n)}$, the passage to the limit with the $\mu$-independent terms via the Reshetnyak continuity theorem and with $\int_\Om\big((u_\ell)^M\big)^\ast\d\mu_\pm$ via the dominated convergence theorem, and finally the choice $v_k\coleq(u_{\ell_k})^{M_k}$ for suitable $M_k,\ell_k\to\infty$. In fact, the main deviation from \cite{FS} is that here we involve general functionals of measures in the sense of Section \ref{subsec:func_meas} instead of just anisotropic total variations. However, we can take $\ol{u_\ell}$ even as \emph{area-}strict approximations of $\ol{w}$ in $\BV(\pOm)$ (see \cite[Lemma B.2]{Bildhauer03} or \cite[Theorem 1.2]{Schmidt15}), and then we can rely on the rewriting of \eqref{eq:F-enlarge-domain} and can still apply the Reshetnyak continuity theorem on the enlarged domain $\pOm$ if we only use this theorem in the inhomogeneous version of Theorem \ref{thm:Resh_cont_inhom_BV} rather than the homogeneous version of Theorem \ref{thm:Resh_cont_hom_BV}. With these minor adaptations, the above-mentioned strategy carries over to the present situation, and we refer the reader to the proof of \cite[Lemma 8.2]{FS} for further details of the implementation.
\end{proof}

From the preceding auxiliary results we obtain the desired recovery sequences in case $u_0\in\L^\infty(\R^N)$:

\begin{proof}[Proof of Theorem \ref{thm:rec_seq} in case $u_0\in\W^{1,1}(\R^N)\cap\L^\infty(\R^N)$]
  We use the sequence $(w_k)_k$ of Proposition \ref{prop:free_rec_seq} and record in particular that also $(\ol{w_k})_k$ converges to $\ol{u}$ area-strictly in $\BV(\pOm)$, on any open $\pOm\subseteq\R^N$ such that $\Om\Subset\pOm$, $|\pOm|<\infty$. In view of $u_0\in\L^\infty(\R^N)$ we can apply Lemma \ref{lem:recovery-W-Wu0} to determine, for each $k$, a sequence $(v_{k,\ell})_\ell$ in $\W^{1,1}_{u_0}(\Om)$ such that $(\ol{v_{k,\ell}})_\ell$ converges to $\ol{w_k}$ area-strictly in $\BV(\pOm)$, $\pOm$ as before, with $\lim_{\ell\to\infty}\mathrm{F}^\mu[v_{k,\ell}]=\MF_{u_0}^\mu[w_k]$ and also the other conclusions of Lemma \ref{lem:recovery-W-Wu0} valid in corresponding versions. Then we obtain all claims of Theorem \ref{thm:rec_seq} for $u_k\coleq v_{k,\ell_k}\in\W^{1,1}_{u_0}(\Om)$ by choosing, for each $k$, a suitably large $\ell_k$, and by putting together the convergence properties of Proposition \ref{prop:free_rec_seq} and Lemma \ref{lem:recovery-W-Wu0}.
\end{proof}

\begin{rem}[on the $\L^\infty$ constraints]
  The basic reason for requiring some $\L^\infty$ control in the preceding is
  that this control enables the application of the dominated convergence theorem
  in the proof of Lemma \ref{lem:recovery-W-Wu0} and thus allows for deducing
  convergence of $\mu_\pm$-measure terms from $\mu_\pm$-\ae{} convergence of
  precise representatives. Essentially we can produce the required control in
  the proof of Lemma \ref{lem:recovery-W-Wu0} itself by passing to the
  $\L^\infty$ truncations $(u_\ell)^M$, but in doing so we can preserve
  prescribed boundary values $u_0$ only if, at least for $u_0$, an extra
  $\L^\infty$ bound is assumed and we take $M\ge\|u_0\|_{\L^\infty}$. We
  refer once more to \cite[Section 8]{FS} for full details.
\end{rem}

It remains to establish Theorem \ref{thm:rec_seq} in the general case without the extra assumption $u_0\in\L^\infty(\R^N)$. To this end, we will approximate an arbitrary $u_0\in\W^{1,1}(\R^N)$ by $u_{0,k}\in\W^{1,1}(\R^N)\cap\L^\infty(\R^N)$ and will then rely on the next lemma to slightly perturb competitors $z_k$ with $z_k=u_{0,k}$ at $\partial\Om$ into $u_k$ with $u_k=u_0$ at $\partial\Om$. To avoid ambiguity, from here on we upgrade the notation for the extension of $u$ via $u_0$ from $\ol{u}$ to $\ol{u}^{u_0}$.

\begin{lem}\label{lem:approx-u0_inhom}
  We impose on $f$ the general assumptions of this section and consider admissible measures $\mu_\pm$ on $\Om$. If a sequence $(u_{0,k})_k$ converges in $\W^{1,1}(\R^N)$ to $u_0$ and a sequence $(z_k)_k$ in $\W^{1,1}(\Om)$ is such that $z_k\in\W^{1,1}_{u_{0,k}}(\Om)$ for each $k$, then there exists a sequence $(u_k)_k$ in $\W^{1,1}_{u_0}(\Om)$ such that $(u_k{-}z_k)_k$ converges to $0$ in $\W^{1,1}(\Om)$ and consequently also $\big(\ol{u_k}^{u_0}{-}\ol{z_k}^{u_{0,k}}\big)_k$ converges to $0$ in $\W^{1,1}(\R^N)$ with
  \begin{gather*}
    \lim_{k\to\infty}\bigg(\int_\Om f(\,.\,,\nabla u_k)\dx-\int_\Om f(\,.\,,\nabla z_k)\dx\bigg)=0\,,\\
    \lim_{k\to\infty}\bigg(\int_\Om u_k^\ast\,\d\mu_--\int_\Om z_k^\ast\,\d\mu_-\bigg)=0
    \qq\text{and}\qq
    \lim_{k\to\infty}\bigg(\int_\Om u_k^\ast\,\d\mu_+-\int_\Om z_k^\ast\,\d\mu_+\bigg)=0\,.
  \end{gather*}
\end{lem}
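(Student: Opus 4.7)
The plan is to take the obvious direct construction $u_k \coleq z_k + u_0 - u_{0,k}$ for each $k\in\N$, which makes essentially every claim immediate. First I would verify $u_k \in \W^{1,1}_{u_0}(\Om)$: since $z_k \in \W^{1,1}_{u_{0,k}}(\Om)$ by assumption, that is $z_k - u_{0,k} \in \W^{1,1}_0(\Om)$, the identity $u_k - u_0 = z_k - u_{0,k}$ shows $u_k \in \W^{1,1}_{u_0}(\Om)$ as desired. The remaining convergences of the differences are then transparent, because $u_k - z_k = u_0 - u_{0,k}$ converges to $0$ in $\W^{1,1}(\Om)$ by hypothesis, while on $\R^N\setminus\ol\Om$ the extensions $\ol{u_k}^{u_0}$ and $\ol{z_k}^{u_{0,k}}$ take the values $u_0$ and $u_{0,k}$ directly, so the difference $\ol{u_k}^{u_0} - \ol{z_k}^{u_{0,k}}$ equals $u_0 - u_{0,k}$ on all of $\R^N$, yielding $\W^{1,1}(\R^N)$-convergence to $0$.

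For the convergence of the $f$-integrals, I would invoke Lemma \ref{lem:est_f_finf} twice to get the pointwise bound
\begin{equation*}
  |f(x,\nabla u_k(x)) - f(x,\nabla z_k(x))|
  \le \max\{\finf(x,\nabla(u_0{-}u_{0,k})(x)),\,\finf(x,\nabla(u_{0,k}{-}u_0)(x))\}
  \le \beta|\nabla(u_0{-}u_{0,k})(x)|\,,
\end{equation*}
where the last step uses the upper bound $\finf(x,\cdot)\le\beta|\cdot|$ from \eqref{eq:lin_growth_finf}. Integrating over $\Om$ and exploiting $u_{0,k}\to u_0$ in $\W^{1,1}$ immediately delivers the claimed limit.

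For the $\mu_\pm$-integrals, the key ingredient I would use is a continuous embedding estimate of the form
\begin{equation*}
  \int_\Om|v^\ast|\,\d\mu_\pm \le C_\pm\|v\|_{\W^{1,1}(\Om)}
  \qq\qq\text{for all } v \in \W^{1,1}(\Om)\,,
\end{equation*}
which follows from the admissibility hypothesis alone. Indeed, by \cite[Proposition 4.1]{FS} admissibility of $\mu_\pm$ is equivalent to an (isotropic) IC with some finite constant, and such an IC in turn yields via \cite[Theorem 4.6]{FS} (or \cite[Theorem 7.5]{Schmidt25}) the displayed integral estimate applied separately to $v_+$ and $v_-$. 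Since $u_k,z_k\in\W^{1,1}(\Om)$, their precise representatives coincide with the Lebesgue value at $\H^{N-1}$-a.e.\ point, and consequently $u_k^\ast - z_k^\ast = (u_0-u_{0,k})^\ast$ holds $\H^{N-1}$-a.e., and thus $\mu_\pm$-a.e.\ by admissibility. Applying the embedding to $v\coleq u_0-u_{0,k}$ gives
\begin{equation*}
  \bigg|\int_\Om u_k^\ast\,\d\mu_\pm - \int_\Om z_k^\ast\,\d\mu_\pm\bigg|
  \le C_\pm \|u_0 - u_{0,k}\|_{\W^{1,1}(\Om)} \xrightarrow[k\to\infty]{} 0\,,
\end{equation*}
as required.

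The proof is hence entirely mechanical once the right construction is chosen, and the only step drawing on non-trivial external input is the embedding estimate above. Since that estimate is by now a standard consequence of the equivalences between admissibility, ICs, and $\BV$-integral inequalities established in \cite{FS, Schmidt25}, I do not anticipate any genuine obstacle; the main care required is the bookkeeping to ensure that the identity $u_k^\ast - z_k^\ast = (u_0-u_{0,k})^\ast$ holds on a set of full $\mu_\pm$-measure, which is exactly what admissibility provides.
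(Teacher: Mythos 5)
Your proof is correct and takes essentially the same approach as the paper: the same construction $u_k = z_k + u_0 - u_{0,k}$, and the same pointwise bound via Lemma \ref{lem:est_f_finf} and \eqref{eq:lin_growth_finf} for the convergence of the $f$-integrals. The only cosmetic difference is that the paper delegates the remaining claims (including the $\mu_\pm$-convergence and the boundary-class verification) to \cite[Lemma 8.3]{FS}, which internally relies on the same construction and essentially the same admissibility-based embedding $\int_\Om|v^\ast|\,\d\mu_\pm \lesssim \|v\|_{\W^{1,1}(\Om)}$ that you spell out directly.
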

  
\begin{proof}
  All claims except the convergence with general integrand $f$ are provided by \cite[Lemma 8.3]{FS}\footnote{The statement of \cite[Lemma 8.3]{FS} is partially made for measures which satisfy an isotropic IC in $\Om$, possibly with large constant. By \cite[Proposition 4.1]{FS} the admissibility of $\mu_\pm$, as assumed in the present Lemma \ref{lem:approx-u0_inhom}, does ensure this type of IC for $\mu_\pm$, and thus all conclusions of \cite[Lemma 8.3]{FS} indeed apply here.} (which is based on the straightforward choice $u_k\coleq z_k-u_{0,k}+u_0$). However, since \ae{} on $\Om$ we have the upper bound $f(\,.\,,\nabla u_k)-f(\,.\,,\nabla z_k)
  \le\finf(\,.\,,\nabla u_k-\nabla z_k)
  \le\beta|\nabla u_k-\nabla z_k|$ together with an analogous lower bound, the $\W^{1,1}(\Om)$-convergence $u_k{-}z_k\to0$ implies this remaining convergence as well.
\end{proof}

Finally, we are ready to verify the existence of recovery sequences in $\W^{1,1}_{u_0}(\Om)$ in full generality.

\begin{proof}[Proof of Theorem \ref{thm:rec_seq} for general $u_0\in\W^{1,1}(\R^N)$.]
  Given $u_0\in\W^{1,1}(\R^N)$, we choose a sequence $(u_{0,k})_k$ in $\W^{1,1}(\R^N)\cap\L^\infty(\R^N)$ such that $(u_{0,k})_k$ converges to $u_0$ in $\W^{1,1}(\R^N)$. We then follow closely the proof of \cite[Theorem 8.4]{FS}. First, we apply Theorem \ref{thm:rec_seq} for the arbitrary given $u\in\BV(\Om)$ and any of the chosen $u_{0,k}$ as the boundary datum (for which the theorem is already established) to determine, for each $k$, a sequence $(y_{k,\ell})_\ell$ in $\W^{1,1}_{u_{0,k}}(\Om)$ such that $\big(\ol{y_{k,\ell}}^{u_{0,k}}\big)_\ell$ converges to $\ol{u}^{u_{0,k}}$ area-strictly in $\BV(\pOm)$, on any open $\pOm\subseteq\R^N$ such that $\Om\Subset\pOm$, $|\pOm|<\infty$, with
  \begin{gather*}
    \lim_{\ell\to\infty}\int_\Om f(\,.\,,\nabla y_{k,\ell})\dx
    =\int_\Om f(\,.\,,\D u)+\int_{\partial\Om}\finf(\,.\,,(u{-}u_{0,k})\nu_\Om)\,\d\H^{N-1}\,,\\
    \lim_{\ell\to\infty}\int_\Om y_{k,\ell}^\ast\,\d\mu_-
    =\int_\Om u^+\,\d\mu_-
    \qq\qq\text{and}\qq\qq
    \lim_{\ell\to\infty}\int_\Om y_{k,\ell}^\ast\,\d\mu_+
    =\int_\Om u^-\,\d\mu_+\,.
  \end{gather*}
  We then choose, for each $k$, a suitably large $\ell_k$, set $z_k\coleq y_{k,\ell_k}\in\W^{1,1}_{u_{0,k}}(\Om)$, and take into account both the convergence $u_{0,k}\to u_0$ in $\W^{1,1}(\R^N)$ and the convergence of traces $u_{0,k}\to u_0$ in $\L^1(\partial\Om\,;\H^{N-1})$. In this way, we can achieve that $\big(\ol{z_k}^{u_{0,k}}\big)_k$ converges to $\ol{u}^{u_0}$ area-strictly in $\BV(\pOm)$, $\pOm$ as before, with
  \begin{gather}
    \lim_{k\to\infty}\int_\Om f(\,.\,,\nabla z_k)\dx
    =\int_\Om f(\,.\,,\D u)+\int_{\partial\Om}\finf(\,.\,,(u{-}u_0)\nu_\Om)\,\d\H^{N-1}\,,
      \label{eq:appr1}\\
    \lim_{k\to\infty}\int_\Om z_k^\ast\,\d\mu_-
    =\int_\Om u^+\,\d\mu_-
    \qq\qq\text{and}\qq\qq
    \lim_{k\to\infty}\int_\Om z_k^\ast\,\d\mu_+
    =\int_\Om u^-\,\d\mu_+\,.
      \label{eq:appr2}
  \end{gather}
  By applying Lemma \ref{lem:approx-u0_inhom} we find a new sequence $(u_k)_k$ in $\W^{1,1}_{u_0}(\Om)$, which may be seen as a perturbation of $(z_k)_k$, such that the same convergence properties remain valid. Spelled out, this means that $\big(\ol{u_k}^{u_0}\big)_k$ converges to $\ol{u}^{u_0}$ area-strictly in $\BV(\pOm)$ and that formulas \eqref{eq:appr1}, \eqref{eq:appr2} hold verbatim in the same way with $z_k$ replaced by $u_k$. In particular, the combination of these convergences gives $\lim_{k\to\infty}\mathrm{F}^\mu[u_k]=\MF_{u_0}^\mu[u]$, and thus all claims of Theorem \ref{thm:rec_seq} are established.
\end{proof}

\begin{appendix}

\section{Codimension-one sections and general ICs}
  \label{asec:sections_ICs}

In this section we return to the proof of Proposition \ref{prop:higher_dim} in the full generality of measures $\mu_+$ and $\mu_-$ not necessarily singular to each other. However, before carrying out the main argument in this regard, we collect preliminary estimates which involve codimension-one sections of $\BV$ functions. Also in these considerations we generally assume that $\Om$ is a bounded open set with Lipschitz boundary in $\R^N$.

\subsection[Some estimates for codimension-one sections of \texorpdfstring{$\BV$}{BV} functions]{\boldmath Some estimates for codimension-one sections of $\BV$ functions}

We start with a small lemma of general measure theory, here arranged in a form suitable for our purposes.

\begin{lem}\label{lem:ae-conv}
  For an arbitrary measure space $(\mathcal{X},\mathscr{A},\mu)$, consider a sequence $(h_k)_k$ in $\L^1(\mathcal{X}\,;\mu)$ such that $h_k\ge0$ on $\mathcal{X}$ for all\/ $k\in\N$ and $h\in\L^1(\mathcal{X}\,;\mu)$. If\/ $\liminf_{k\to\infty}h_k\ge h$ is valid $\mu$-\ae{} on $\mathcal{X}$ and moreover $\lim_{k\to\infty}\int_\mathcal{X}h_k\,\d\mu=\int_\mathcal{X}h\,\d\mu$ holds, then there exists a subsequence $\big(h_{k_\ell}\big)_\ell$ such that also $\lim_{\ell\to\infty}h_{k_\ell}=h$ is valid\/ $\mu$-\ae{} on $\mathcal{X}$.
\end{lem}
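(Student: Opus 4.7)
The plan is to reduce the statement to $L^1$-convergence plus the standard fact that every $L^1$-convergent sequence has a $\mu$-a.e.\@ convergent subsequence. To this end, I would introduce the nonnegative ``negative error'' $g_k \coleq \max(h-h_k,0)$ and observe the algebraic identity $|h_k-h| = (h_k-h) + 2g_k$, so that it suffices to show $\int_\mathcal{X} g_k\,\d\mu \to 0$; combined with the hypothesis $\int_\mathcal{X}(h_k-h)\,\d\mu\to 0$ this will yield $h_k\to h$ in $L^1(\mathcal{X}\,;\mu)$.

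The first step is the pointwise bound $0\le g_k\le |h|$, which holds because $h_k\ge 0$ everywhere on $\mathcal{X}$. The second step is to extract pointwise decay: from $\liminf_{k\to\infty} h_k\ge h$ $\mu$-a.e.\@ one gets $\limsup_{k\to\infty}(h-h_k)\le 0$ $\mu$-a.e., so $g_k\to 0$ $\mu$-a.e. Since $|h|\in L^1(\mathcal{X}\,;\mu)$, the dominated convergence theorem then gives $\lim_{k\to\infty}\int_\mathcal{X} g_k\,\d\mu=0$.

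Combining these, $\int_\mathcal{X}|h_k-h|\,\d\mu=\int_\mathcal{X}(h_k-h)\,\d\mu+2\int_\mathcal{X}g_k\,\d\mu\to 0$, so $h_k\to h$ in $L^1(\mathcal{X}\,;\mu)$. Passing to a subsequence via the classical result that $L^1$-convergence implies $\mu$-a.e.\@ convergence along a subsequence then produces the desired $(h_{k_\ell})_\ell$.

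There is no real obstacle here; the only point requiring a bit of care is ensuring that $g_k$ is dominated by an $L^1$ function independent of $k$, which is why the hypothesis $h_k\ge 0$ is essential (it forces $g_k\le h_+\le|h|$). Note that the hypothesis does not require $h\ge 0$, and the proof does not use it either.
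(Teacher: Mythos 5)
Your proof is correct and follows essentially the same route as the paper: both decompose $h_k-h$ into positive and negative parts, bound the negative part $(h_k-h)_-$ (your $g_k$) by $h_+\in\L^1$ using $h_k\ge0$, apply dominated convergence, and then combine with the integral hypothesis before extracting an a.e.-convergent subsequence. The only cosmetic difference is that you observe full $\L^1$-convergence $h_k\to h$, while the paper is content to note that $(h_k-h)_+\to0$ in $\L^1$ and pair the resulting a.e.-convergent subsequence with the already-known a.e.\@ convergence of $(h_k-h)_-$; these are the same idea.
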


\begin{proof}
  Since $\lim_{k\to\infty}(h_k{-}h)_-=0$ holds $\mu$-\ae{} on $\mathcal{X}$ and since we have $0\le(h_k{-}h)_-\le h_+\in\L^1(\mathcal{X}\,;\mu)$, we may apply the dominated convergence theorem to deduce $\lim_{k\to\infty}\int_\mathcal{X}(h_k{-}h)_-\,\d\mu=0$. As a consequence, we infer $\lim_{k\to\infty}\int_\mathcal{X}(h_k{-}h)_+\,\d\mu=\lim_{k\to\infty}\big[\int_\mathcal{X}h_k\,\d\mu-\int_\mathcal{X}h\,\d\mu+\int_\mathcal{X}(h_k{-}h)_-\,\d\mu\big]=0$, that is, $(h_k{-}h)_+$ converge to $0$ in $\L^1(\mathcal{X}\,;\mu)$. A standard result in measure theory then gives a subsequence $\big(h_{k_\ell}\big)_\ell$ such that first $\lim_{\ell\to\infty}(h_{k_\ell}{-}h)_+=0$ holds $\mu$-\ae{} on $\mathcal{X}$ and all in all also $\lim_{\ell\to\infty}h_{k_\ell}=h$ holds $\mu$-\ae{} on $\mathcal{X}$.
\end{proof}

Our main observations and estimates for sections of $\BV$ functions follow.

\begin{lem} \label{lem:var_anisotropies}
  Consider an arbitrary $V \in \BV(\Omd)$. Then, for \ae{} $x_0\in(0,1)$, there hold\textup{:}
  \begin{gather}
    V(x_0,.\,)\in \BV(\Om)\,,
      \label{eq:slice-BV}\\
    V(x_0,.\,)^\pm(x)=V^\pm(x_0,x)
      \text{ for }\H^{N-1}\text{-\ae{} }x\in\Om\,.
      \label{eq:slice-v-pm}
  \end{gather}
  Moreover, with $f$ and $\p$ as in Section \ref{sec:LSC_exist}, and with the notation $\D_x V(x_0,.\,)$ for the derivative measure of\/ $V(x_0,.\,)\in \BV(\Om)$, the $\finf$-anisotropic total variations $|\D_x V(x_0,.\,)|_{\finf}(\Om)$ are measurable in $x_0\in(0,1)$, and we have
  \begin{align}
  \int_{0}^{1} |\D_x V(x_0,.\,)|_{\finf}(\Om) \,\d x_0
    &\leq |\D V|_{\p}(\Omd)\,,
    \label{eq:var_anisotropies}\\
  \int_{0}^{1} \int_{\partial \Om} \finf(\,.\,,V(x_0,.\,) \nu_{\Om})\,\d\H^{N-1} \,\d x_0
    &\leq \int_{\partial \Omd} \p(\,.\,,V \nu_{\Omd})\,\d\H^N\,.
    \label{eq:var_anisotropies_bd}
  \end{align}
\end{lem}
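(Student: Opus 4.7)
For the slicing assertions \eqref{eq:slice-BV}--\eqref{eq:slice-v-pm}, I would invoke the standard slicing theorem for $\BV$ functions on product domains (see e.g.\@ \cite[Theorems 3.103, 3.108]{AFP00}) applied to $\Omd = (0,1)\times\Om$. This directly yields $V(x_0,.\,)\in\BV(\Om)$ for \ae{} $x_0\in(0,1)$ and the compatibility of the approximate upper/lower limits with slicing. Measurability of $x_0\mapsto|\D_x V(x_0,.\,)|_{\finf}(\Om)$ in $x_0$ then follows from the duality representation as a supremum over a countable dense family of test fields in $\{\phi\in\C^1_\c(\Om,\R^N):\finf^\circ(\,.\,,\phi)\leq1\}$, each term $\int_\Om V(x_0,.\,)\,\div\phi\dx$ being measurable in $x_0$ by Fubini.

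For the interior estimate \eqref{eq:var_anisotropies}, the plan is to choose an area-strict approximation $(V_k)_k\subseteq\C^\infty(\Omd)\cap\W^{1,1}(\Omd)$ of $V$ in $\BV(\Omd)$ and exploit the pointwise bound $\p((x_0,x),(\xi_0,\xi))\ge\finf(x,\xi)$ from Lemma~\ref{lem:p}\eqref{item:p:v}. For each smooth $V_k$, Fubini's theorem combined with the bound gives
\[
  \int_0^1 |\D_x V_k(x_0,.\,)|_{\finf}(\Om)\,\d x_0
  = \int_\Omd \finf(x,\nabla_x V_k)\,\d(x_0,x)
  \leq \int_\Omd \p(\,.\,,\nabla V_k)\,\d(x_0,x)
  = |\D V_k|_\p(\Omd)\,.
\]
By Theorem~\ref{thm:Resh_cont_hom_BV} the right-hand side converges to $|\D V|_\p(\Omd)$. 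After passing to a subsequence such that $V_k(x_0,.\,)\to V(x_0,.\,)$ in $\L^1(\Om)$ for \ae{} $x_0$, Theorem~\ref{thm:Resh_LSC_hom_BV} applied sectionwise delivers $|\D_x V(x_0,.\,)|_{\finf}(\Om)\leq\liminf_k|\D_x V_k(x_0,.\,)|_{\finf}(\Om)$, and Fatou's lemma closes the chain.

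For the boundary inequality \eqref{eq:var_anisotropies_bd}, I would first observe that the inward normal $\nud$ is $(0,\nu_\Om)$ on the side portion $[0,1]\times\partial\Om$ and $(\pm1,0)$ on the caps $\{0,1\}\times\Om$; by definition of $\p$ this yields $\p(\,.\,,V\nud)=\finf(\,.\,,V\nu_\Om)$ on the side and $\p(\,.\,,V\nud)\ge0$ on the caps. Dropping the non-negative cap contribution and disintegrating $\H^N\resmes([0,1]\times\partial\Om)$ via Lemma~\ref{lem:prod_rectifiable} reduces the claim to the trace compatibility statement that, for \ae{} $x_0\in(0,1)$, the trace of $V$ on $\{x_0\}\times\partial\Om$ (viewed as boundary trace in $\BV(\Omd)$) agrees with the trace of $V(x_0,.\,)\in\BV(\Om)$ on $\partial\Om$. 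This identity is trivial for the smooth $V_k$ above and passes to the limit because area-strict convergence in $\BV(\Omd)$ entails $\L^1$-convergence of boundary traces on $\partial\Omd$, which via Fubini and Lemma~\ref{lem:prod_rectifiable} disintegrates to \ae{}-in-$x_0$ agreement.

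The principal technical obstacle is coordinating three limit passages --- area-strict convergence on the full cylinder, \ae{}-in-$x_0$ $\L^1$-convergence of sections on $\Om$, and $\L^1$-convergence of boundary traces on $\partial\Omd$ --- along a common subsequence while keeping tight control of the anisotropic functionals via Reshetnyak continuity and lower semicontinuity. The secondary subtlety is the trace compatibility between $\BV$-sections and side-boundary traces, which is the slicing analogue of continuity of the trace operator and does not appear to be directly quoted elsewhere in the paper.
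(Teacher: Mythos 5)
The proposal is sound in outline for \eqref{eq:var_anisotropies}, \eqref{eq:var_anisotropies_bd}, and the measurability claim, but it has a genuine gap at \eqref{eq:slice-v-pm}, which is actually the technical heart of the lemma.

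\textbf{The gap.} Theorems 3.103 and 3.108 of Ambrosio--Fusco--Pallara concern \emph{one-dimensional} slicing: restricting a $\BV$ function on an open subset of $\R^{N+1}$ to lines $\{y + t e\}$, and comparing the 1D approximate limits along the line with the $(N{+}1)$-dimensional ones. What is needed here is the opposite, codimension-one situation: freeze the single coordinate $x_0$ and view $V(x_0,.\,)$ as an $N$-dimensional $\BV$ function, then compare the $N$-dimensional approximate upper/lower limits $V(x_0,.\,)^\pm(x)$ (defined via $N$-dimensional balls in $\Om$) with the $(N{+}1)$-dimensional ones $V^\pm(x_0,x)$ (defined via $(N{+}1)$-dimensional balls in $\Omd$). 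The cited AFP theorems do not address this, and the statement is not a formal consequence of 1D slicing. In the paper this is the step that consumes the most machinery: for $V\in\W^{1,1}$ one uses a Meyers--Serrin approximation that additionally converges $1$-quasi-everywhere (drawing on the capacitary results of Federer--Ziemer, Meyers, Dal Maso), combined with Lemma~\ref{lem:Eilenberg} to slice the $\H^N$-negligible exceptional set; for general $V\in\BV$ one uses strict approximation from above (Carriero--Dal Maso--Leaci--Pascali) together with Lahti's theorem identifying the pointwise $\H^N$-\ae{} limit of such approximations with $V^+$, and additionally needs strict convergence of the sections $V_k(x_0,.\,)\to V(x_0,.\,)$ for \ae{} $x_0$ (obtained in the paper via Lemma~\ref{lem:ae-conv}). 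None of this is encompassed by a citation of AFP 3.103/3.108, so the proposal as stated does not establish \eqref{eq:slice-v-pm}.

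\textbf{Smaller remarks.} The same AFP citation is also not the right one for \eqref{eq:slice-BV}; the relevant classical reference is Miranda's Teorema 3.3, though here there are elementary detours (Meyers--Serrin + Fubini + Fatou, as in the paper) and the gap is cosmetic. For measurability of $x_0\mapsto|\D_x V(x_0,.\,)|_{\finf}(\Om)$, your duality route via a countable family of test fields is a legitimate alternative to the paper's argument (Miranda's measurability statement for the isotropic case followed by Reshetnyak continuity along a strictly convergent subsequence of sections), but it needs two caveats: the pairing constraint should read $\finf^\circ(\,.\,,{-}\phi)\le 1$ (or use $\widetilde{\finf}^\circ$) since $\finf$ need not be even, and one must verify that a fixed countable family in the constraint set realizes the supremum for all slices (separability plus a slight dilation of test fields makes this work). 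Your proof of \eqref{eq:var_anisotropies} matches the paper's. For \eqref{eq:var_anisotropies_bd}, your treatment is, if anything, more explicit than the paper's about the trace compatibility on $[0,1]\times\partial\Om$ (the paper uses it silently); to push it through you must, as you partly anticipate, upgrade the \ae{}-in-$x_0$ $\L^1$ convergence of sections to \emph{strict} convergence in $\BV(\Om)$ along a subsequence, which is precisely where Lemma~\ref{lem:ae-conv} and Miranda's equality $|\D_x V|(\Omd)=\int_0^1|\D_x V(x_0,.\,)|(\Om)\,\d x_0$ enter.
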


Actually, \eqref{eq:slice-BV} and the isotropic version of \eqref{eq:var_anisotropies} (including the relevant measurability claim) are covered by \cite[Appendice, Teorema 3.3]{Miranda64}, and in our subsequent proof we will revisit some of the arguments provided there in order to establish our further claims.

\begin{proof}[Proof of Lemma \ref{lem:var_anisotropies}]
  To check \eqref{eq:slice-BV}, we first deduce from $V\in\L^1(\Omd)$ and Fubini's theorem that $V(x_0,.\,)\in\L^1(\Om)$ holds for \ae{} $x_0\in(0,1)$. Then we argue by approximation. Indeed, the $\BV$-version of the Meyers-Serrin theorem
  (see e.\@g.\@ \cite[Teorema 1]{AnzGiaq78} or \cite[Theorem 3.9]{AFP00}) gives a sequence $(V_k)_k$ in $\W^{1,1}(\Omd)$ such that $V_k$ converges to $V$ strictly in $\BV(\Omd)$. In view of
  \begin{equation}\label{eq:L1-slices}
    \lim_{k\to\infty} \int_{0}^{1} \|V_k(x_0,.\,)-V(x_0,.\,)\|_{\L^1(\Om)} \,\d x_0
    = \lim_{k\to\infty} \|V_k -V\|_{\L^1(\Omd)}
    = 0\,,
  \end{equation}
  we may choose a subsequence $\big(V_{k_\ell}\big)_\ell$ such that $V_{k_\ell}(x_0,.\,)$ tends to $V(x_0,.\,)$ in $\L^1(\Om)$ for \ae{} $x_0 \in (0,1)$. Then Fatou's lemma, Fubini's theorem, and the strict convergence give
  \[
    \int_0^1\liminf_{\ell\to\infty}\int_\Om|\nabla_xV_{k_\ell}(x_0,x)|\dx \,\d x_0
    \le\liminf_{\ell\to\infty}\int_\Omd|\nabla V_{k_\ell}(x_0,x)|\,\d(x_0,x)
    =|\D V|(\Omd)\,,
  \] 
  and by semicontinuity of the total variation we deduce the claim \eqref{eq:slice-BV}. In fact, we get $V(x_0,.\,)\in\BV(\Om)$ with $|\D_xV(x_0,.\,)|(\Om)\le\liminf_{\ell\to\infty}\int_\Om|\nabla_xV_{k_\ell}(x_0,x)|\dx<\infty$ for \ae{} $x_0\in(0,1)$.

  \smallskip

  Next we establish the claimed measurability of $|\D_x V(x_0,.\,)|_{\finf}(\Om)$ in $x_0\in(0,1)$ (plus another useful auxiliary assertion). We first recall that the corresponding measurability of the isotropic quantity $|\D_xV(x_0,.\,)|(\Om)$ is guaranteed already by \cite[Proposizione 3.2]{Miranda64}\footnote{The proof of \cite[Proposizione 3.2]{Miranda64} draws crucially on the usage of specific strict approximations which can be obtained, in our notation,  away from $\partial\Om$ by mollification of $V$ with respect to the variable $x\in\R^N$ only.}. Then we exploit that strict convergence of $(\nabla V_k)\mathcal{L}^{N+1}$ to $\D V$ in $\RM(\Omd,\R^{N+1})$ induces strict convergence of $(\nabla_xV_k)\mathcal{L}^{N+1}$ to $\D_xV$ in $\RM(\Omd,\R^N)$, and by slight modification of the previous reasoning we deduce
  \begin{equation}\label{eq:pre-strict-conv-slices}\begin{aligned}
    \int_0^1|\D_xV(x_0,.\,)|(\Om)\,\d x_0
    &\le\liminf_{\ell\to\infty}\int_0^1\int_\Om|\nabla_xV_{k_\ell}(x_0,x)|\dx\,\d x_0\\
    &\le\limsup_{\ell\to\infty}\int_0^1\int_\Om|\nabla_xV_{k_\ell}(x_0,x)|\dx\,\d x_0
    =|\D_xV|(\Omd)
  \end{aligned}\end{equation}
  for a suitable subsequence of every sequence $(V_k)_k$ in $\W^{1,1}(\Omd)$ which converges strictly to $V$ in $\BV(\Omd)$. Additionally we now bring in that $|\D_xV|(\Omd)\le\int_0^1|\D_xV(x_0,.\,)|(\Om)\,\d x_0$ holds by \cite[Teorema 3.3]{Miranda64}\footnote{In fact, \cite[Teorema 3.3]{Miranda64} gives even equality $|\D_xV|(\Omd)=\int_0^1|\D_xV(x_0,.\,)|(\Om)\,\d x_0$. From the proof of this equality, the argument for `$\ge$' has already been revisited in form of \eqref{eq:pre-strict-conv-slices}, while the presently relevant inequality `$\le$'  follows quite straightforwardly from the distributional characterization of the total variation and Fubini's theorem.}. This improves the preceding chain of inequalities to a chain of equalities. In terms of the auxiliary functions $h_k(x_0)\coleq\int_\Om|\nabla_xV_k(x_0,x)|\dx$ and $h(x_0)\coleq|\D_xV(x_0,.\,)|(\Om)$, for which we already know $h_k,h\in\L^1((0,1))$ and $\liminf_{\ell\to\infty}h_{k_\ell}(x_0)\ge h(x_0)$ for \ae{} $x_0\in(0,1)$, this means $\lim_{\ell\to\infty}\int_0^1h_{k_\ell}(x_0)\,\d x_0=\int_0^1h(x_0)\,\d x_0$. Possibly passing to yet another subsequence, we then achieve by Lemma \ref{lem:ae-conv} the convergence $\lim_{\ell\to\infty}h_{k_\ell}(x_0)=h(x_0)$ for \ae{} $x_0\in(0,1)$, in other words
  \begin{equation}\label{eq:strict-conv-slices}
    \lim_{\ell\to\infty}\int_\Om|\nabla_xV_{k_\ell}(x_0,x)|\dx=|\D_xV(x_0,.\,)|(\Om)
    \qq\text{for \ae{} }x_0\in(0,1)\,.
  \end{equation}
  As this confirms strict convergence in $\BV(\Om)$, at the present stage the continuity property of Theorem \ref{thm:Resh_cont_hom_BV} applies and gives
  \[
    \lim_{\ell\to\infty}\int_\Om\finf(x,\nabla_xV_{k_\ell}(x_0,x))\dx
    =|\D_xV(x_0,.\,)|_\finf(\Om)
    \qq\text{for \ae{} }x_0\in(0,1)\,.
  \]
  In particular, we may now read off the claimed measurability of $|\D_xV(x_0,.\,)|_\finf(\Om)$ in $x_0\in(0,1)$.

  \smallskip
  
  With measurability of $|\D_xV(x_0,.\,)|_\finf(\Om)$ in $x_0\in(0,1)$ at hand, the proof of \eqref{eq:var_anisotropies} is mostly an anisotropic version of the initial reasoning for \eqref{eq:slice-BV}. Indeed, we take right the same $\big(V_{k_\ell})_\ell$ and then use in turn the lower semicontinuity of Theorem \ref{thm:Resh_LSC_hom_BV}, Fatou's lemma and Fubini's theorem, the estimate $\finf(x,\xi)\le \p(x_0,x,\xi_0,\xi)$ of Lemma \ref{lem:p}\eqref{item:p:v}, and finally the continuity property of Theorem \ref{thm:Resh_cont_hom_BV}. In this way we infer
  \[\begin{aligned}
    \int_0^1|\D_xV(x_0,.\,)|_\finf(\Om)\,\d x_0    &\le\int_0^1\liminf_{\ell\to\infty}\int_\Om\finf(x,\nabla_xV_{k_\ell}(x_0,x))\dx\,\d x_0\\
    &\le\liminf_{\ell\to\infty}\int_\Omd\finf(x,\nabla_xV_{k_\ell}(x_0,x))\,\d(x_0,x)\\
    &\le\liminf_{\ell\to\infty}\int_\Omd\p(x_0,x,\partial_{x_0} V_{k_\ell}(x_0,x),\nabla_xV_{k_\ell}(x_0,x))\,\d(x_0,x)
    =|\D V|_\p(\Omd)\,,
  \end{aligned}\]
  which proves \eqref{eq:var_anisotropies}.

  \smallskip

  To prove \eqref{eq:var_anisotropies_bd}, we exploit once more the product structure of $\H^N$ on $[0,1]\times\partial\Om$ provided by Lemma \ref{lem:prod_rectifiable}. Furthermore, we make use first of $\p(x_0,x,0,\xi)=\finf(x,\xi)$ and then of the inclusion $\left([0,1]\times\partial\Om\right) \subset \partial\Omd$ with $\nu_{\Omd}(x_0,x)=(0,\nu_{\Om}(x))$ for $\H^N$-\ae{} $(x_0,x) \in \left([0,1]\times\partial\Om \right)$ and of $\p \geq 0$. This yields
  \[\begin{aligned}
    \int_{0}^{1} \int_{\partial \Om}  \finf(\,.\,,V(x_0,.\,)\nu_{\Om})\,\d\H^{N-1} \,\d x_0
    &= \int_{ [0,1] \times \partial \Om } \finf(x, V(x_0,x)\nu_{\Om}(x))\,\d\H^{N}(x_0,x) \\
    &= \int_{   [0,1] \times \partial \Om }\p(x_0,x, V(x_0,x)(0,\nu_{\Om}(x)))\,\d\H^{N}(x_0,x) \\
    &\leq \int_{\partial \Omd} \p(\,.\,,V \nu_{\Omd})\,\d\H^N
  \end{aligned}\]
  and leaves us precisely with \eqref{eq:var_anisotropies_bd}. 

  \smallskip

  Finally, we turn to \eqref{eq:slice-v-pm} and in a first step verify this claim for $V\in\W^{1,1}(\Omd)$. By the Meyers-Serrin theorem and the $1$-capacitary results in \cite[Sections 4 and 10]{FedZie72} (compare also \cite[Theorem 4]{Meyers70} and \cite[Proposition 1.2, Section 6]{DalMaso83}) there exists a sequence $(V_k)_k$ in $\W^{1,1}(\Omd)\cap\C^\infty(\Omd)$ such that $V_k$ converge to $V$ in $\W^{1,1}(\Omd)$ and moreover $V_k=V_k^\ast$ converge to $V^\ast$ pointwise $\H^N$-\ae{} in $\Omd$. By Lemma \ref{lem:Eilenberg} this implies, for \ae{} $x_0\in(0,1)$, the $\H^{N-1}$-\ae{} convergence of $V_k(x_0,.\,)$ to $V^\ast(x_0,.\,)$ in $\Om$. Next, by a reasoning analogous to \eqref{eq:L1-slices}, for a subsequence $(V_{k_\ell})_\ell$, we may moreover assume, again for \ae{} $x_0\in(0,1)$, convergence of $V_{k_\ell}(x_0,.\,)$ to $V(x_0,.\,)$ in $\W^{1,1}(\Om)$ and then also $\H^{N-1}$-\ae{} convergence of $V_{k_\ell}(x_0,.\,)=V_{k_\ell}(x_0,.\,)^\ast$ to $V(x_0,.\,)^\ast$ in $\Om$. Comparing the $\H^{N-1}$-\ae{} convergences, we infer, still for \ae{} $x_0\in(0,1)$, the $\H^{N-1}$-\ae{} equality $V^\ast(x_0,.\,)=V(x_0,.\,)^\ast$ in $\Om$, which confirms \eqref{eq:slice-v-pm} in this case. In a second step we generalize to arbitrary $V\in\BV(\Omd)$ by a similar, but slightly more involved approximation argument. Indeed, we apply \cite[Theorem 3.3]{CarDalLeaPas88} to find strict approximations of $V$ from above and exploit the fact that by \cite[Theorem 3.2]{Lahti17} these approximations necessarily are $\H^N$-\ae{} approximations of $V^+$ as well (compare e.\@g.\@ \cite[Proof of Lemma 2.23]{FS} for similar reasoning). In more technical terms, both the results together in fact guarantee existence of a sequence $(V_k)_k$ in $\W^{1,1}(\Omd)$ such that $V_k$ converge to $V$ strictly in $\BV(\Omd)$ with $V_k\ge V$ \ae{} in $\Omd$ and moreover $V_k^\ast$ converge to $V^+$ pointwise $\H^N$-\ae{} in $\Omd$. By Lemma \ref{lem:Eilenberg}, the last convergence carries over to the sections as before. Moreover, recalling that we derived \eqref{eq:strict-conv-slices} for a subsequence of an arbitrary strictly convergent sequence, for \ae{} $x_0\in(0,1)$, we furthermore achieve strict convergence of $V_{k_{\ell_m}}(x_0,.\,)$ to $V(x_0,.\,)$ in $\BV(\Om)$, and in view of $V_k(x_0,.\,)\ge V(x_0,.\,)$ by another application of \cite[Theorem 3.2]{Lahti17} we can get $\H^{N-1}$-\ae{} convergence of $V_{k_{\ell_m}}(x_0,.\,)^\ast$ to $V(x_0,.\,)^+$ in $\Om$ as well. Comparing the convergences, we deduce \eqref{eq:slice-v-pm} for the case of the approximate upper limit. Finally, \eqref{eq:slice-v-pm} for the approximate lower limit follows by applying the result obtained to ${-}V$.
\end{proof}

\subsection{ICs with extra variable for general pairs of measures}

With Lemma \ref{lem:var_anisotropies} at hand, we are ready to carry on the ICs from $(\mu_-,\mu_+)$ and $(\mu_+,\mu_-)$ to $({\mu_-}_\lozenge,{\mu_+}_\lozenge)$ and $({\mu_+}_\lozenge,{\mu_-}_\lozenge)$ even for $\mu_+$ and $\mu_-$ not necessarily singular to each other.

\begin{proof}[Proof of Proposition \ref{prop:higher_dim} in the general case]
  We first recall that by Lemma \ref{lem:iff_admiss_mud} the admissibility of $\mu_\pm$ carries over to ${\mu_\pm}_\lozenge$. Now, instead of working with ICs for sets as in Definition \ref{defi:IC}, we rather employ \cite[Theorem 4.2]{FS} and work with the equivalent reformulation of the ICs for $\BV$ functions. Then it will be enough to show that 
  \begin{equation}\label{eq:higher_dim2a}
    -\llangle\mu_\pm\,;v^\mp\rrangle
    \le C\left(|\D v|_{\finf}(\Om)
      + \int_{\partial\Om}\finf(\,.\,,v\nu_\Om)\,\d\H^{N-1}\right)
    \qq\text{for all non-negative }v\in\BV(\Om)
  \end{equation}
  implies
  \begin{equation}\label{eq:higher_dim2b}
    -\llangle{\mu_\pm}_\lozenge\,;V^\mp\rrangle
    \leq C\left(|\D V|_{\p}(\Omd)
      + \int_{\partial\Omd}\p(\,.\,,V\nu_{\Omd})\,\d\H^{N}\right)
    \qq\text{for all non-negative }V\in\BV(\Omd)
  \end{equation}
  (whereby the analogous implication with $(\mu_+,\mu_-,\finf,\p)$ replaced by $(\mu_-,\mu_+,\widetilde\finf,\widetilde\p)$ is also valid).

  Therefore, we now suppose that \eqref{eq:higher_dim2a} holds, and we consider an arbitrary non-negative $V\in\BV(\Omd)$. From \eqref{eq:slice-BV} we have $V(x_0,.\,)\in\BV(\Om)$ for \ae{} $x_0\in(0,1)$. We can thus proceed by using first Fubini's theorem along with \eqref{eq:slice-v-pm}, then \eqref{eq:higher_dim2a} with $v=V(x_0,.)$, and finally \eqref{eq:var_anisotropies} and \eqref{eq:var_anisotropies_bd}. In this way we deduce
  \[\begin{aligned}
    -\llangle{\mu_\pm}_\lozenge\,;V^\mp\rrangle
    &= \int_{0}^{1} \big({-}\llangle\mu_\pm\,;V(x_0,.\,)^\mp\rrangle\big) \,\d x_0\\
    & \leq C  \int_{0}^{1}  \left(  |\D V(x_0,\,.\,)|_{\finf}(\Om)+\int_{\partial \Om}\finf(\,.\,, V(x_0,\,.\,) \nu_\Om)\,\d\H^{N-1} \right) \,\d x_0\\
    & \leq C \left( |\D V|_{\p}(\Omd)+ \int_{\partial \Omd} \p(\,.\,,V \nu_{\Omd})\,\d\H^N\right)
  \end{aligned}\]
  and arrive at \eqref{eq:higher_dim2b}, as required.
\end{proof}

\end{appendix}

\bibliographystyle{amsplain}

\end{document}